\documentclass[12pt]{amsart}
\usepackage{amsmath}
\usepackage{amssymb}
\usepackage{mathrsfs}
\usepackage[top=3cm,bottom=3.5cm,right=2.5cm,left=2.5cm]{geometry}
\usepackage{ulem}
\newcommand{\Z}{\mathbb{Z}}
\newcommand{\Q}{\mathbb{Q}}
\newcommand{\N}{\mathbb{N}}
\newcommand{\R}{\mathbb{R}}

\newcommand{\SL}{{\rm SL}}
\newcommand{\dist}{{\rm dist}}
\newcommand{\diam}{{\rm diam}}

\newcommand{\Int}{{\rm Int}}
\newcommand{\Bad}{\boldsymbol{{\rm Bad}}}
\usepackage{amsthm}
\newtheorem{dfn}{Definition}[section]
\newtheorem{prop}{Proposition}[section]
\newtheorem{conj}[prop]{Conjecture}
\newtheorem{thm}[prop]{Theorem}
\newtheorem{cor}[prop]{Corollary}
\newtheorem{lem}[prop]{Lemma}
\theoremstyle{definition}

\usepackage{graphicx}%forarXiv

\makeatletter
\@addtoreset{equation}{section}

\makeatother

\title{On a lower bound of the number of integers in Littlewood's conjecture}
\author{Shunsuke Usuki}
\date{}
\begin{document}
	\maketitle
	
	\begin{abstract}
We show that, for any $0<\gamma<1/2$, any $(\alpha,\beta)\in\R^2$ except on a set with Hausdorff dimension about $\sqrt{\gamma}$, any small $0<\varepsilon<1$ and any large $N\in\N$, the number of integers $n\in[1,N]$ such that
$n\langle n\alpha\rangle\langle n\beta\rangle<\varepsilon$ is greater than $\gamma\varepsilon\log N$ up to a uniform constant. 
This can be seen as a quantitative result on the fact that the exceptional set to Littlewood's conjecture has Hausdorff dimension zero, obtained by M. Einsiedler, A. Katok and E. Lindenstrauss in 2000's.
For the proof, we study the behavior of the empirical measures with respect to the diagonal action on $\SL(3,\R)/\SL(3,\Z)$ and show that we can obtain a quantitative result on Littlewood's conjecture for $(\alpha,\beta)$ if the corresponding empirical measures are well-behaved. We also estimate Hausdorff dimension of the exceptional set to be small.
	\end{abstract}
	
	\section{Introduction}\label{Introduction}
	
	\subsection{Littlewood's conjecture and our main results}\label{intro_littlewood}
	
	{\it Littlewood's conjecture} is the following famous and long-standing problem in simultaneous Diophantine approximation.
	
	\begin{conj}[Littlewood (c. 1930)]\label{LittlewoodConjecture}
		For every $(\alpha,\beta)\in\R^2$,
		\begin{equation}\label{Littlewood_liminf}
			\liminf_{n\to\infty}n\langle n\alpha\rangle\langle n\beta\rangle=0,
		\end{equation}
		where $\langle x\rangle=\min_{n\in\Z}|x-n|$ for $x\in\R$.
	\end{conj}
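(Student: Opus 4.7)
The final statement is Littlewood's Conjecture itself, a celebrated open problem, so what follows can only be an honest sketch of the best available line of attack and a description of exactly where it breaks down. The plan is the diagonal-flow approach initiated by Cassels and Swinnerton-Dyer and placed in its modern dynamical form by Margulis and by Einsiedler, Katok and Lindenstrauss (EKL).

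First I would set up the standard Dani-type correspondence. Put $X=\SL(3,\R)/\SL(3,\Z)$, let
\[
u(\alpha,\beta)=\begin{pmatrix} 1 & \alpha & \beta \\ 0 & 1 & 0 \\ 0 & 0 & 1 \end{pmatrix},\qquad x_{\alpha,\beta}=u(\alpha,\beta)\SL(3,\Z),
\]
and let $A$ be the full positive diagonal subgroup of $\SL(3,\R)$. A routine geometry-of-numbers argument, combined with Mahler's compactness criterion, shows that $\liminf_{n\to\infty}n\langle n\alpha\rangle\langle n\beta\rangle=0$ if and only if the orbit $Ax_{\alpha,\beta}$ is \emph{not} bounded in $X$. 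Littlewood's Conjecture thus reduces to the purely dynamical assertion: for every $(\alpha,\beta)\in\R^{2}$ the orbit $Ax_{\alpha,\beta}$ fails to be bounded.

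Next I would argue by contradiction. Assume $Ax_{\alpha,\beta}$ is bounded, and along a sequence $T_{k}\to\infty$ form the empirical measures associated to a one-parameter diagonal subgroup starting at $x_{\alpha,\beta}$; boundedness together with Prokhorov produces a weak-$*$ limit $\mu$ which is an $A$-invariant Borel probability measure on $X$. The EKL measure-rigidity theorem classifies every $A$-invariant ergodic probability measure on $X$ of positive entropy under some element $a\in A$: the only such measures are Haar on $X$ or Haar on a compact periodic $A$-orbit coming from an order in a totally real cubic field. Haar on $X$ is excluded by boundedness, and, provided $1,\alpha,\beta$ are $\Q$-linearly independent, the periodic-orbit alternative is excluded by matching $x_{\alpha,\beta}$ against the explicit algebraic structure of such orbits. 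This gives the desired contradiction \emph{provided} $\mu$ has an ergodic component of positive entropy under some $a\in A$.

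The hard part, and the reason Littlewood's Conjecture is still unresolved, is precisely this entropy requirement. Ergodic $A$-invariant measures on $X$ that have zero entropy under \emph{every} $a\in A$ are not classified, and a hypothetical bounded orbit $Ax_{\alpha,\beta}$ could a priori equidistribute toward such a pathological object; no currently available technique rules this out uniformly in $(\alpha,\beta)$. The natural mechanism for manufacturing entropy is to exploit that $x_{\alpha,\beta}$ sits on a horospherical unipotent orbit, so that small-scale transverse fluctuations of the initial configuration persist in the limit; this is exactly the input that lets EKL push the exceptional set down to Hausdorff dimension zero, and that the present paper refines to a quantitative density lower bound for generic $(\alpha,\beta)$. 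Closing the residual entropy gap for \emph{every single} pair $(\alpha,\beta)$ is the decisive missing ingredient, is not accessible by any technique known to the author, and is not attempted in this paper.
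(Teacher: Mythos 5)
You have correctly identified that the statement is Littlewood's Conjecture itself, which is open; the paper does not prove it and does not claim to --- it only states it as Conjecture \ref{LittlewoodConjecture} and proves quantitative results (Theorem \ref{maintheorem}, Corollary \ref{maincorollary}) toward it outside a small exceptional set. Your reduction via the Dani-type correspondence (Proposition \ref{relations_diag_little}), the appeal to the measure rigidity of Einsiedler--Katok--Lindenstrauss (Theorem \ref{entropy_measurerigidity}), and your identification of the unresolved obstruction --- the unclassified zero-entropy $A$-invariant measures --- match exactly the framework the paper works in, so your assessment is accurate and appropriately honest. One small correction: since $3$ is prime, the positive-entropy classification in Theorem \ref{entropy_measurerigidity} yields only the Haar measure $m_X$; the measures supported on compact periodic $A$-orbits arising from totally real cubic fields have zero entropy under every $a\in A$, so they belong to the unclassified zero-entropy case rather than to the positive-entropy alternative you describe, which if anything strengthens your point about where the argument breaks down.
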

	
	It is easily seen that a.e. $(\alpha,\beta)\in\R^2$ satisfies the equation (\ref{Littlewood_liminf}), since a.e. $\alpha\in\R$ satisfies $\liminf_{n\to\infty}n\langle n\alpha\rangle=0$. Hence (\ref{Littlewood_liminf}) is nontrivial when $\alpha$ and $\beta$ are badly approximable, that is, they satisfy $\liminf_{n\to\infty}n\langle n\alpha\rangle>0$. However, if we write $\Bad$ for the set of all badly approximable numbers, then Hausdorff dimension of $\Bad$ is $1$ and there are many $(\alpha,\beta)\in\R^2$ for which (\ref{Littlewood_liminf}) is nontrivial.
	
	There are remarkable works toward Conjecture \ref{LittlewoodConjecture}. J.W.S. Cassels and H.P.F. Swinnerton-Dyer made the first development in \cite{CSD55} and showed that the equation (\ref{Littlewood_liminf}) holds if $\alpha$ and $\beta$ are in the same cubic number field. For a long time then, there had been no remarkable development about Conjecture \ref{LittlewoodConjecture}. However, A.D. Pollington and S. Velani showed in \cite{PV00} that, for each $\alpha\in\Bad$, there exists a subset $\boldsymbol{G}(\alpha)$ of $\Bad$ such that $\dim_H\boldsymbol{G}(\alpha)=1$ and, for every $\beta\in\boldsymbol{G}(\alpha)$, we have
	$$
	n\langle n\alpha\rangle\langle n\beta\rangle\leq\frac{1}{\log n}
	$$
	for infinitely many $n\in\N$.\footnote{In this paper, $\N$ denotes the set of positive integers.} Here, for a subset $X$ of $\R^k$, $\dim_H X$ denotes Hausdorff dimension of $X$. In \cite{EKL06}, M. Einsiedler, A. Katok and E. Lindenstrauss showed that the set of exceptions to Conjecture \ref{LittlewoodConjecture} is very small.
	
	\begin{thm}[{\cite[Theorem 1.5]{EKL06}}]\label{exception_Littlewood}
		The set of $(\alpha,\beta)\in\R^2$ such that {\rm (\ref{Littlewood_liminf})} does not hold has Hausdorff dimension zero. In fact, it is the countable union of compact subsets with box dimension zero.
	\end{thm}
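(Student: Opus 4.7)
The plan is to translate Littlewood's conjecture into a statement about bounded orbits of the diagonal Cartan action on the homogeneous space $X=\SL(3,\R)/\SL(3,\Z)$ and then bound the Hausdorff (and even box) dimension of the bounded-orbit set by combining measure rigidity with an entropy-to-dimension estimate. First I would set up the standard dictionary: let $A$ be the full positive diagonal subgroup of $\SL(3,\R)$, and, for $(\alpha,\beta)\in\R^2$, put
$$u(\alpha,\beta)=\begin{pmatrix}1&0&\alpha\\0&1&\beta\\0&0&1\end{pmatrix},\qquad x(\alpha,\beta)=u(\alpha,\beta)\SL(3,\Z)\in X.$$
A Mahler-compactness computation (already implicit in \cite{CSD55}) shows that $\liminf_{n\to\infty}n\langle n\alpha\rangle\langle n\beta\rangle>0$ if and only if the orbit $Ax(\alpha,\beta)$ is bounded in $X$. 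Since $(\alpha,\beta)\mapsto x(\alpha,\beta)$ is locally bi-Lipschitz, proving the theorem reduces to showing that
$$E=\{x\in X:\overline{Ax}\text{ is compact}\}=\bigcup_{n\in\N} E_{K_n},\qquad E_K:=\{x\in X:Ax\subset K\},$$
is a countable union of compact $A$-invariant sets $E_{K_n}$, each of upper box dimension zero.

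The second step is the measure-theoretic input. By the Einsiedler--Katok--Lindenstrauss classification, every $A$-invariant ergodic probability measure on $X$ with positive entropy under some one-parameter subgroup must be algebraic, and in the $\SL(3)$-case the only such measure with non-compact support is Haar on $X$ itself (the remaining possibilities are supported on a countable family of compact $A$-orbits, each with zero topological entropy). Consequently, any $A$-invariant probability measure supported on the compact set $E_K$ cannot be Haar and must therefore have zero entropy with respect to every one-parameter subgroup of $A$.

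The third step is to pass from vanishing entropy to vanishing dimension. Fix a regular one-parameter subgroup $\{a_t\}\subset A$; the variational principle gives $h_{\mathrm{top}}(a_1|_{E_K})=\sup_\mu h_\mu(a_1)=0$, where the supremum runs over $a_1$-invariant probability measures on $E_K$. Because $A$ is two-dimensional and its unstable foliation decomposes into two one-dimensional weight subspaces, a Bowen-ball covering argument applied to each of the two expanding directions separately yields an estimate of the form $\overline{\dim}_B E_K\le C\cdot h_{\mathrm{top}}(a_1|_{E_K})=0$. Exhausting $X$ by compact sets $K_n$ then produces the required decomposition $E=\bigcup_n E_{K_n}$ with each $E_{K_n}$ of box dimension zero.

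The main obstacle is, of course, the measure rigidity step: the classification of positive-entropy $A$-invariant measures on $\SL(3,\R)/\SL(3,\Z)$ is the deep heart of \cite{EKL06}, relying on the high-entropy method and a Ratner-type rigidity for the unipotent leaves generated by conditional measures along coarse Lyapunov foliations. A secondary technical point is the entropy-to-dimension step: a single one-parameter diagonal flow has positive-dimensional center and would only give an upper bound of dimension $\le 2+C\cdot h_{\mathrm{top}}$, which is useless; it is essential to use the \emph{full} rank-two Cartan action so that Bowen covers can be sharpened in two independent expanding directions simultaneously. This is precisely the reason why Littlewood's conjecture, rather than the easier one-variable Diophantine problem, becomes accessible to homogeneous dynamics.
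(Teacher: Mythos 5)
The paper does not prove Theorem \ref{exception_Littlewood}; it cites it from \cite{EKL06} and only records that the proof there combines the measure classification (Theorem \ref{entropy_measurerigidity}) with the dynamical dictionary (Proposition \ref{relations_diag_little}). Your outline does follow the actual strategy of \cite{EKL06}: translate to diagonal orbits, use measure rigidity to force zero entropy for every invariant measure on a bounded-orbit locus, and convert zero entropy into zero transverse box dimension via a Bowen covering exploiting both expanding directions. Those are the right three pillars.

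There is, however, a concrete error in your dictionary step. You assert that $\liminf_n n\langle n\alpha\rangle\langle n\beta\rangle>0$ if and only if the \emph{full} orbit $A x(\alpha,\beta)$ is bounded. The correct equivalence, as in Proposition \ref{relations_diag_little}, uses the Weyl chamber semigroup $A^+$. The full $A$-orbit through a point $u\Gamma$ with $u$ unipotent is always unbounded: moving into the opposite chamber contracts a fixed primitive integer vector of the lattice $u\Z^3$ to $0$, regardless of $(\alpha,\beta)$. Hence your sets $E_K=\{x: Ax\subset K\}$ are empty (or at least do not meet the relevant $U$-orbit) and certainly do not cover the exceptional set. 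The set that matters is $\{x: A^+x\subset K\}$, which is $A^+$-invariant but not $A$-invariant, and the passage from the $A^+$-bounded condition to a compact $A$-invariant object on which measure rigidity can be applied (e.g.\ via $\omega$-limit sets, or by noting that weak$*$ limits of $A^+$-empirical measures are automatically $A$-invariant, as in Subsection \ref{subsec_empirical_diagonal}) is precisely the step your proposal elides. A secondary gap: the variational principle expresses $h_{\mathrm{top}}(a_1)$ as a supremum over $a_1$-invariant measures, not $A$-invariant ones, so an averaging argument over the complementary one-parameter subgroup is required before Theorem \ref{entropy_measurerigidity} can be invoked. Finally, the two-directional Bowen covering you postpone is the real technical content of the dimension bound; it is the counterpart of Lemmas \ref{countchoice} and \ref{partition_B} used in Section \ref{section_proof_crutialthm} of this paper for the quantitative statement.
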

	
	Lindenstrauss gives in \cite{Lin10} the explicit sufficient condition for $\alpha\in\R$ to satisfy (\ref{Littlewood_liminf}) for all $\beta\in\R$, which all $\alpha\in\R$ except on the set of Hausdorff dimension zero satisfy. It is obtained from the techniques in \cite{EKL06}.
	These results are the best for Conjecture \ref{LittlewoodConjecture} as of now.
	
	In this paper, we are interested in quantitative properties for Conjecture \ref{LittlewoodConjecture}. That is, for $(\alpha,\beta)\in\R^2$, we want to estimate the number of integers $n$ in $[1,N]$ such that $n\langle n\alpha\rangle\langle n\beta\rangle$ is small for large $N\in\N$. In regarding this problem, the quantitative version of the result of Pollington and Velani above is established in \cite{PVZZ22} (the equation (18)), which says that, for each $\alpha\in\Bad$ and $\gamma\in[0,1]$, there exists a subset $\boldsymbol{G}(\alpha,\gamma)\subset\R$ such that $\dim_H\boldsymbol{G}(\alpha,\gamma)=1$ and, for every $\beta\in\boldsymbol{G}(\alpha,\gamma)$, we have
	$$
	\left|\left\{n\in\{1,\dots,N\}\left|\ n\langle n\alpha\rangle\langle n\beta-\gamma\rangle\leq\frac{1}{\log n}\right.\right\}\right|\geq C\log\log N
	$$
	for $N\in\N$, where $C>0$ is a constant independent of $N$. Our main result is the quantitative version of Theorem \ref{exception_Littlewood}, which says that, for any $(\alpha,\beta)\in\R^2$ except on a set of small Hausdorff dimension, any small $0<\varepsilon<1$ and large $N\in\N$, the number of $1\leq n\leq N$ such that $n\langle n\alpha\rangle\langle n\beta\rangle<\varepsilon$ is larger than $\varepsilon\log N$ up to a constant.
	
	\begin{thm}\label{maintheorem}
		For any $0<\gamma<1/2$, there exist a subset $Z(\gamma)\subset\R^2$ and constants $C>0$ and $0<\varepsilon_0<1$ independent of $\gamma$ such that
		$\dim_HZ(\gamma)\leq 15\sqrt{\gamma}$ and, for any $(\alpha,\beta)\in \R^2\setminus Z(\gamma)$ and any $0<\varepsilon<\varepsilon_0$,
		$$
		\liminf_{N\to\infty}\frac{1}{\log N}\left|\left\{n\in\{1,\dots,N\}\left|\ n\langle n\alpha\rangle\langle n\beta\rangle<\varepsilon\right.\right\}\right|\geq C\gamma\varepsilon.
		$$
	\end{thm}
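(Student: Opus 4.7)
The plan is to translate the counting problem into the dynamics of the one-parameter diagonal flow $a_t = \operatorname{diag}(e^t, e^t, e^{-2t})$ on $X = \SL(3,\R)/\SL(3,\Z)$, and then apply a quantitative form of the measure rigidity of Einsiedler--Katok--Lindenstrauss. To $(\alpha,\beta) \in \R^2$ one associates the point $x_{\alpha,\beta} = u(\alpha,\beta)\SL(3,\Z) \in X$, where $u(\alpha,\beta)$ is the upper-triangular unipotent carrying $(\alpha,\beta)$ in its third column. A Minkowski/Dani-type correspondence identifies, up to uniform constants, the set of integers $n \in [1,N]$ with $n\langle n\alpha\rangle\langle n\beta\rangle < \varepsilon$ with the set of times $t \in [0, T]$, $T \asymp \log N$, at which $a_t x_{\alpha,\beta}$ visits a suitable cusp region $K_\varepsilon \subset X$ whose Haar mass is comparable to $\varepsilon$. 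Thus the theorem reduces to a lower bound of the form
\[
\liminf_{T \to \infty} \frac{1}{T} \int_0^T \mathbf{1}_{K_\varepsilon}(a_t x_{\alpha,\beta}) \, dt \;\geq\; C\gamma\varepsilon.
\]

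Let $\mu^T_{\alpha,\beta} = \frac{1}{T} \int_0^T \delta_{a_t x_{\alpha,\beta}} \, dt$ denote the empirical measures along the orbit. Any weak-$\ast$ subsequential limit is $a_t$-invariant, possibly with some mass escaping to the cusp. The central input is the classification of Cartan-invariant measures from \cite{EKL06}: an $a_t$-invariant probability on $X$ whose ergodic components have positive entropy with respect to a second commuting flow is forced to be Haar, while the remaining invariant measures live on a thin, essentially low-dimensional family. I would therefore define $Z(\gamma)$ as the set of $(\alpha,\beta)$ for which the empirical measures $\mu^T_{\alpha,\beta}$ persistently place at least a $(1-\gamma)$-fraction of their mass near this thin family. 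For $(\alpha,\beta)\notin Z(\gamma)$, a $\gamma$-proportion of the empirical mass is Haar-like in the limit, which, since $K_\varepsilon$ carries Haar mass $\asymp\varepsilon$, delivers the required lower bound with constant proportional to $\gamma\varepsilon$.

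The main obstacle is the dimension estimate $\dim_H Z(\gamma) \leq 15\sqrt{\gamma}$. This requires a quantitative version of the entropy/dimension machinery behind \cite{EKL06}: one must convert the assumption that a $(1-\gamma)$-fraction of $\mu^T_{\alpha,\beta}$ concentrates near low-entropy invariant sets into the conclusion that $(\alpha,\beta)$ lies in a thin neighborhood of a union of algebraic subvarieties of $\R^2$, and then efficiently cover these neighborhoods. I would work scale-by-scale: partition both $\R^2$ and $X$, push the partition of $\R^2$ forward along $a_t u$, and track conditional entropy increments as $T$ grows. The trade-off to optimize is between the entropy deficit forced by avoiding the bulk (which must scale linearly in $\gamma$) and the dimension cost of the corresponding cover; a concavity/Cauchy--Schwarz step in the entropy calculus is the most plausible source of the $\sqrt{\gamma}$ exponent, with the constant $15$ coming from the dimensions of the relevant unstable horospherical subgroups in $\SL(3,\R)$. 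The truly delicate point, where qualitative EKL-type arguments are insufficient, is extracting an \emph{effective} entropy drop from the hypothesis that only the empirical, rather than some limit, measure is bad; carrying out this finite-time entropy reduction and making it uniform in $\varepsilon$ is the heart of the matter.
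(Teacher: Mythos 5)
Your outline captures the right high-level strategy—empirical measures along the orbit of a diagonal action plus EKL measure rigidity to classify limits—but it has a gap at its foundation. You propose to use the one-parameter diagonal flow $a_t=\operatorname{diag}(e^t,e^t,e^{-2t})$, and you invoke the Einsiedler--Katok--Lindenstrauss classification for the weak-$\ast$ limits of $\mu^T_{\alpha,\beta}$. But a limit of Birkhoff averages along a one-parameter flow is only $a_t$-invariant, not invariant under the full Cartan subgroup $A$; Theorem~\ref{entropy_measurerigidity} applies only to $A$-invariant measures, so no classification is available for your limits. Relatedly, the Dani correspondence you state does \emph{not} identify $\{n\le N:\ n\langle n\alpha\rangle\langle n\beta\rangle<\varepsilon\}$ with cusp excursion times of the one-parameter flow ``up to uniform constants'': a short vector for $a_t$ forces $\langle n\alpha\rangle$ and $\langle n\beta\rangle$ to be small at \emph{the same} rate $e^{-t}$, while Littlewood allows arbitrary imbalance, which is why the paper works over the full two-parameter square $[0,T]^2$ via $a_{s,t}=\operatorname{diag}(e^{-s-t},e^s,e^t)$ and defines $\delta^T_{A^+,x}$ as an average over $[0,T]^2$. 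With the one-parameter flow you obtain neither a clean counting correspondence (Theorem~\ref{converge_high_entrpy} uses a Fubini/projection trick along $s+t=\log n$ over the square) nor a usable rigidity input.

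Two further points where your description diverges from what is needed. First, escape of mass is not part of the exceptional set: in the paper it is handled as a separate \emph{favorable} case (Theorem~\ref{escape_of_mass}), since an orbit lingering near the cusp produces many short vectors directly, giving a lower bound $\gamma/(3\log 2)$ via an elementary covering/doubling argument (Proposition~\ref{cusp_littlewood}). Your $Z(\gamma)$ is defined as ``mass concentrating near the thin family,'' which does not distinguish escape of mass from low-entropy compactly supported limits; the paper's $Z(\gamma)$ consists precisely of $(\alpha,\beta)$ whose limits simultaneously have mass $>1-\gamma$ \emph{and} entropy $\le\gamma$. Second, your guess that the $\sqrt\gamma$ comes from a Cauchy--Schwarz step is not how the paper obtains it: the theorem is proved by showing $\dim_H Z_{x_0}(\gamma^2)\le 15\gamma$, and the square root appears from a reparametrization combined with a concavity-of-entropy argument splitting the $N\times N$ time grid into the first $\gamma N$ rows versus the rest (Lemma~\ref{keyinclusion}); the constant $15$ traces back to the contraction rates $e^{2}$ and $e$ of $a_1$ on $U$ (giving $3\gamma+2(4+\log 4)\gamma<15\gamma$ in Proposition~\ref{keystatement}), not merely to a dimension count of horospherical subgroups.
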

	
	By taking $Z=\bigcap_{0<\gamma<1/2}Z(\gamma)$, we have the following corollary.
	
	\begin{cor}\label{maincorollary}
		There exist $Z\subset\R^2$ and $0<\varepsilon_0<1$ such that $\dim_H Z=0$ and, for any $(\alpha,\beta)\in \R^2\setminus Z$ and any $0<\varepsilon<\varepsilon_0$,
		$$
		\liminf_{N\to\infty}\frac{1}{\log N}\left|\left\{n\in\{1,\dots,N\}\left|\ n\langle n\alpha\rangle\langle n\beta\rangle<\varepsilon\right.\right\}\right|\geq C(\alpha,\beta)\varepsilon
		$$
		for some constant $C(\alpha,\beta)>0$ determined by $(\alpha,\beta)$.
	\end{cor}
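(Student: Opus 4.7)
The plan is to take
\[
Z := \bigcap_{0 < \gamma < 1/2} Z(\gamma),
\]
where $Z(\gamma)$ is the set supplied by Theorem \ref{maintheorem}, and to use the same $\varepsilon_0$ as in that theorem. The two assertions of the corollary will then follow from two different uses of the theorem: the dimension claim from letting $\gamma \to 0^+$, and the liminf bound from selecting, for each $(\alpha,\beta) \notin Z$, a specific $\gamma$ witnessing this fact.

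For the dimension bound, monotonicity of Hausdorff dimension gives $\dim_H Z \le \dim_H Z(\gamma) \le 15\sqrt{\gamma}$ for every $\gamma \in (0, 1/2)$. Taking the infimum over $\gamma$ yields $\dim_H Z = 0$.

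For the pointwise estimate, fix $(\alpha,\beta) \in \R^2 \setminus Z$. Since $(\alpha,\beta)$ fails to lie in the intersection, there exists some $\gamma = \gamma(\alpha,\beta) \in (0, 1/2)$ with $(\alpha,\beta) \notin Z(\gamma)$. Applying Theorem \ref{maintheorem} with this value of $\gamma$, and with the absolute constant $C$ provided there, we obtain for every $0 < \varepsilon < \varepsilon_0$ the inequality
\[
\liminf_{N \to \infty} \frac{1}{\log N} \bigl| \{ n \in \{1,\dots,N\} \mid n\langle n\alpha\rangle\langle n\beta\rangle < \varepsilon \} \bigr| \ge C\gamma\,\varepsilon,
\]
and the choice $C(\alpha,\beta) := C\gamma(\alpha,\beta) > 0$ gives the conclusion.

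There is no genuine obstacle here; the corollary is a direct formal consequence of Theorem \ref{maintheorem}. The only mildly delicate point is that the intersection defining $Z$ is uncountable, but this causes no trouble because both assertions are purely monotone in $\gamma$: the dimension estimate only invokes the inclusion $Z \subseteq Z(\gamma)$ for a single $\gamma$ at a time, and the density estimate for a given $(\alpha,\beta) \notin Z$ uses only one $\gamma$ for which $(\alpha,\beta) \notin Z(\gamma)$. No countable additivity argument is required, and one could equivalently replace the uncountable intersection by $\bigcap_{k \in \N} Z(1/(2k+2))$ without changing anything.
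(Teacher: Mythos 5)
Your proof is correct and matches the paper's intended argument exactly: the paper itself states only that the corollary follows ``by taking $Z=\bigcap_{0<\gamma<1/2}Z(\gamma)$,'' which is precisely your construction, and your two monotonicity observations fill in the (omitted) routine details. Nothing to add.
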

	
	We remark that we can not make $Z$ in Corollary \ref{maincorollary} to be contained in a countable union of compact subsets with box dimension zero as Theorem \ref{exception_Littlewood} by our argument. However, our main results give a lower bound of the number of $n\in[1,N]$ such that it makes $n\langle n\alpha\rangle\langle n\beta\rangle$ small which Theorem \ref{exception_Littlewood} does not give.
	In addition, our main results are applicable to much more $(\alpha,\beta)\in\R^2$ than the result in \cite{PVZZ22} and give a lower bound about $\log N$, greater than $\log\log N$.
	
	\subsection{The diagonal action on $\SL(n,\R)/\SL(n,\Z)$ and Littlewood's conjecture}
	
	Littlewood's conjecture is closely related to some action on $\SL(n,\R)/\SL(n,\Z)$ for $n\geq 3$, called {\it the diagonal action}. As we will explain below, Einsiedler, Katok and Lindenstrauss prove Theorem \ref{exception_Littlewood} from {\it the measure rigidity} of the diagonal action and our method to prove the main results is to prove some dynamical property of the diagonal action and apply it.
	
	We first give the definition of the diagonal action and the relation between it and Littlewood's conjecture.
	For $n\geq 3$, we write $G=\SL(n,\R)$, $\Gamma=\SL(n,\Z)$ and $X=G/\Gamma$.
	Since $\Gamma$ is a lattice of $G$ but not unimodular, $X$ has the unique Borel probability measure $m_X$ which is invariant under the action of $G$ and $X$ is not compact.
	We call $m_X$ the Haar measure on $X$.
	We write $A<G$ for the group of positive diagonal matrices with determinant one.
	The subgroup $A$ of $G$ acts on $X$ and we call this action the diagonal action on $X$. 
	Here we consider the case $n=3$. For $s,t\in\R$, we write
	$$
	a_{s,t}=
	\begin{pmatrix}
		e^{-s-t}&0&0\\0&e^s&0\\0&0&e^t
	\end{pmatrix}
	\in A.
	$$
	We define the subsemigroup $A^+$ of $A$ by $A^+=\left\{a_{s,t}\left|\ s,t\geq0\right.\right\}$.
	We write $U$ for the closed subgroup
	$$
	U=
	\left\{\left.u=
	\begin{pmatrix}
		1&0&0\\u_1&1&0\\u_2&0&1
	\end{pmatrix}
	\right|u_1,u_2\in\R\right\}
	$$
	of $G$ and, for $\alpha, \beta\in\R$,
	$$
	\tau_{\alpha,\beta}=
	\begin{pmatrix}
		1&0&0\\
		\alpha&1&0\\
		\beta&0&1
	\end{pmatrix}\in U.
	$$
	We notice that $U$ is unstable for the conjugation with $a$ for every $a\in A^+\setminus\{e\}$, where $e$ is the identity element of $G$. The following proposition shows the relation between the diagonal action and Littlewood's conjecture (see, for example, \cite[Proposition 11.1]{EKL06}).
	
	\begin{prop}\label{relations_diag_little}
		For $(\alpha,\beta)\in\R^2$, $(\alpha,\beta)$ satisfies {\rm (\ref{LittlewoodConjecture})} if and only if the orbit $A^+\tau_{\alpha,\beta}\Gamma$ is unbounded in $X$.
	\end{prop}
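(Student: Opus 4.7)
The plan is to invoke Mahler's compactness criterion for $X=\SL(3,\R)/\SL(3,\Z)$: a set of unimodular lattices is relatively compact in $X$ precisely when there is a uniform positive lower bound on the length of the shortest nonzero vector. Thus $A^+\tau_{\alpha,\beta}\Gamma$ is unbounded if and only if there exist sequences $(s_k,t_k)\in[0,\infty)^2$ and $(p_k,q_k,r_k)\in\Z^3\setminus\{0\}$ with $\|a_{s_k,t_k}\tau_{\alpha,\beta}(p_k,q_k,r_k)^T\|_\infty\to 0$. An explicit computation identifies this vector as $(e^{-s-t}p,\,e^s(q+\alpha p),\,e^t(r+\beta p))^T$, and the product of its absolute values is the $A$-invariant quantity $|p(q+\alpha p)(r+\beta p)|$; minimizing over $(q,r)\in\Z^2$ by taking them to be the nearest integers to $-p\alpha$ and $-p\beta$ reduces this product to $|p|\langle p\alpha\rangle\langle p\beta\rangle$.

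For the direction orbit unbounded $\Rightarrow$ (\ref{Littlewood_liminf}), I would first dispose of the case $p_k=0$: here $\|a_{s,t}\tau_{\alpha,\beta}(0,q,r)^T\|_\infty=\max(e^s|q|,e^t|r|)\geq 1$ since $s,t\geq 0$ and $(q,r)\neq 0$, ruling out vanishing sequences. For $p_k\neq 0$, the arithmetic--geometric mean inequality gives
$$\|a_{s_k,t_k}\tau_{\alpha,\beta}(p_k,q_k,r_k)^T\|_\infty\;\geq\;\bigl(|p_k|\langle p_k\alpha\rangle\langle p_k\beta\rangle\bigr)^{1/3},$$
so $|p_k|\langle p_k\alpha\rangle\langle p_k\beta\rangle\to 0$. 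If $|p_k|\to\infty$ along a subsequence, (\ref{Littlewood_liminf}) follows at once; if $|p_k|$ stays bounded, $\langle p\alpha\rangle$ or $\langle p\beta\rangle$ must vanish for some $p$, so $\alpha$ or $\beta$ is rational and (\ref{Littlewood_liminf}) holds trivially.

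For the converse I would start with $n_k\to\infty$ satisfying $n_kA_kB_k\to 0$, where $A_k:=\langle n_k\alpha\rangle$ and $B_k:=\langle n_k\beta\rangle$, and look for $(s_k,t_k)\in A^+$ that makes the orbit vector $(e^{-s_k-t_k}n_k,\,\pm e^{s_k}A_k,\,\pm e^{t_k}B_k)^T$ vanish. The unconstrained optimum balances the three entries at $(n_kA_kB_k)^{1/3}$ via $e^{s_k}=(n_kB_k/A_k^2)^{1/3}$ and $e^{t_k}=(n_kA_k/B_k^2)^{1/3}$, and this lies in $A^+$ exactly when $n_kB_k\geq A_k^2$ and $n_kA_k\geq B_k^2$. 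If one of these inequalities fails I would set the offending exponent to zero and balance the remaining pair, which sends the maximum coordinate to $\max(A_k,B_k)$. The subtle point is that Littlewood's hypothesis only supplies $n_kA_kB_k\to 0$, not the simultaneous vanishing of $A_k$ and $B_k$; to handle the case where, say, $A_k\to 0$ but $B_k$ stays bounded below, I would apply a pigeonhole argument to $\{jn_k\beta\bmod 1\}_{j=1}^{N_k}$ with $N_k\to\infty$ chosen as a slow inverse of $n_kA_k$, obtaining $j_k\leq N_k$ with $\langle j_kn_k\beta\rangle\leq 1/N_k$; replacing $n_k$ by the multiple $m_k:=j_kn_k$ then produces a sequence along which $\langle m_k\alpha\rangle$, $\langle m_k\beta\rangle$, and $m_k\langle m_k\alpha\rangle\langle m_k\beta\rangle$ all tend to zero, yielding the desired divergent orbit.

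The main obstacle is precisely this semigroup issue: over the full Cartan subgroup $A$ the converse would be immediate from the product identity, but the constraint $s,t\geq 0$ forces the pigeonhole refinement above, which upgrades Littlewood's asymptotic relation to one in which both Diophantine residues $\langle n\alpha\rangle$ and $\langle n\beta\rangle$ vanish simultaneously.
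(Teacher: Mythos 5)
The paper itself gives no proof of this proposition; it just cites \cite[Proposition 11.1]{EKL06}. So I'll evaluate your argument on its own. It is correct, and it is essentially the standard proof via Mahler's compactness criterion and the explicit orbit computation, so there is nothing ``different from the paper's route'' to compare.

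A few points worth flagging. In the forward direction, note that if the $p_k$ (taken positive, as one may) stay bounded then in fact the constraints $s_k,t_k\geq0$ force $|q_k+\alpha p_k|\to0$ directly, which already forces $\alpha$ (or, symmetrically, $\beta$) to be rational by discreteness; your route through $|p_k|\langle p_k\alpha\rangle\langle p_k\beta\rangle\to0$ reaches the same conclusion and is fine. In the converse, you correctly identify the key subtlety — $\liminf n\langle n\alpha\rangle\langle n\beta\rangle=0$ does not by itself force $\langle n\alpha\rangle$ and $\langle n\beta\rangle$ to vanish simultaneously, which is exactly what the constraint $s,t\geq0$ demands — and your pigeonhole (Dirichlet) fix is correct. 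To close it cleanly one should note: (a) after passing to a subsequence, $n_k\to\infty$ with $n_kA_kB_k\to0$ forces at least one of $A_k,B_k\to0$, and if both do, no fix is needed; (b) in the remaining case, say $A_k\to0$, $B_k\geq c>0$, one has $n_kA_k\to0$, so $N_k:=\lfloor(n_kA_k)^{-1/2}\rfloor\to\infty$, $B'_k:=\langle m_k\beta\rangle<1/N_k\to0$, $A'_k:=\langle m_k\alpha\rangle\leq N_kA_k\to0$, and $m_kA'_kB'_k\leq N_kn_kA_k\to0$; (c) with all three vanishing, one verifies as you indicate that the optimal exponents lie in $A^+$ or, after clamping one to $0$, still produce $\max(A'_k,B'_k)\to0$ (this uses $m_k\geq1>B'_k$ so that the remaining balancing exponent is $\geq0$). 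These are all routine and your outline anticipates them; the proof is sound.
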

	
	Next, we give the extremely important fact on the diagonal action: the rigidity of invariant measures.
	It is conjectured that $A$-invariant and ergodic Borel probability measures on $X$ are very restricted. More precisely, the following is conjectured by G.A. Margulis.
	
	\begin{conj}\label{measurerigidity}
		Let $\mu$ be an $A$-invariant and ergodic Borel probability measure on $X=\SL(n,\R)/\SL(n,\Z)$ for $n\geq 3$. Then $\mu$ is algebraic, that is, there exists a closed subgroup $L<G$ such that $A<L$ and $\mu$ is the unique $L$-invariant Borel probability measure on a single, closed $L$-orbit on $X$.
	\end{conj}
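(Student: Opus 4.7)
The plan is to show that any $A$-invariant ergodic probability measure $\mu$ on $X=\SL(n,\R)/\SL(n,\Z)$ is algebraic by studying its conditional measures along the coarse Lyapunov foliations of the $A$-action and iteratively enlarging the group of translations preserving $\mu$. First, I would fix a regular element $a\in A$ and its stable and unstable horospherical subgroups $U^{-}$ and $U^{+}$; standard ergodic theory yields a measurable partition subordinate to the unstable leaves, and the associated leafwise measures $\mu_x^{U^{+}}$ encode the entropy $h_\mu(a)$ via the Ledrappier--Young formula. The decisive point is that any additional invariance of $\mu_x^{U^{+}}$ under a one-parameter subgroup $V\subset U^{+}$ propagates, using $A$-invariance and the ergodic decomposition under $V$, to honest invariance of $\mu$ itself under $V$; once $\mu$ is invariant under a nontrivial unipotent subgroup, Ratner's measure classification theorem together with an enumeration of the closed subgroups $A<L\leq G$ forces $\mu$ to be algebraic. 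So the task reduces to producing a translation invariance of the leafwise measures.

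For the high-entropy regime, in which $h_\mu(a)>0$ for some $a\in A$, I would deploy the two complementary techniques developed in the EKL program: the low-entropy method, which exploits the non-commutativity of distinct coarse Lyapunov subgroups together with Poincar\'e recurrence to manufacture translations from strictly positive entropy, and the high-entropy method, which exploits the product structure of $\mu_x^{U^{+}}$ across commuting coarse Lyapunov subgroups to spread translations transversely. The Weyl-group-like action of $A$ on the set of Lyapunov weights then promotes these translations across every weight direction, and one concludes invariance of $\mu$ under a nontrivial unipotent subgroup, reducing to Ratner.

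The main obstacle is the zero-entropy regime, in which $h_\mu(a)=0$ for \emph{every} $a\in A$; here the leafwise measures are atomic and the methods above give no information. My proposed attack is a dimension induction: one first uses $A$-invariance and the arithmeticity of $\Gamma$ to show that $\supp\mu$ is contained in a countable union of orbit closures of proper $A$-normalized subgroups $H<G$, whence ergodicity places $\mu$ on a single closed $H$-orbit of lower dimension where the whole scheme recurses. The speculative ingredient, which is precisely the step that has obstructed all prior attempts, is a mechanism that extracts a nontrivial unipotent invariance of $\mu$ from the assumption of vanishing entropy along the full rank-$(n-1)$ diagonal. I would try to build such a mechanism from effective equidistribution estimates for unipotent subgroups centralized by large subtori of $A$, combined with the strong restrictions on zero-entropy self-joinings imposed by the higher-rank $A$-action; if this can be made to work, the two regimes merge and the conjecture follows. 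I fully expect that completing this last step is where genuinely new input, very likely of arithmetic or representation-theoretic flavor beyond the current measure-rigidity toolkit, will be required.
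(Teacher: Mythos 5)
The statement you are trying to prove is Conjecture \ref{measurerigidity}, which the paper does not prove and explicitly records as still open; it is attributed to Margulis, and the only ingredient the paper actually uses is the positive-entropy classification of Einsiedler--Katok--Lindenstrauss (Theorem \ref{entropy_measurerigidity}), quoted as a black box. Your proposal correctly recapitulates the known machinery for that positive-entropy regime (leafwise measures along coarse Lyapunov subgroups, the high-entropy and low-entropy methods for producing extra invariance, then Ratner's theorem plus an enumeration of intermediate subgroups $A<L\leq G$), but that part only reproves what is already known and is not where the difficulty lies.

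The genuine gap is exactly the zero-entropy regime, and your treatment of it does not work as stated. Your ``dimension induction'' rests on the claim that $A$-invariance together with arithmeticity of $\Gamma$ forces $\supp\mu$ into a countable union of orbit closures of proper $A$-normalized subgroups $H<G$, after which ergodicity would place $\mu$ on a single lower-dimensional closed $H$-orbit. No such containment is known, and establishing it is essentially equivalent to the conjecture itself: the whole problem is that a zero-entropy $A$-ergodic measure could a priori be ``exotic,'' with support not confined to any proper homogeneous subvariety (compare the compact $A$-orbits from totally real cubic fields, which show that zero-entropy algebraic measures exist but give no mechanism ruling out non-algebraic ones). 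When $h_\mu(a)=0$ for all $a\in A$ the leafwise measures are atomic, so the recurrence and product-structure arguments give nothing, and your proposed substitutes (effective unipotent equidistribution for subgroups centralized by subtori, restrictions on zero-entropy self-joinings) are acknowledged by you to be speculative and are not developed into an argument. Since you concede that genuinely new input would be required at precisely this step, the proposal is a survey of the known strategy plus an open problem, not a proof; it should not be presented as establishing the conjecture, and it is in particular not comparable to anything in the paper, which only assumes the conjecture's known positive-entropy fragment.
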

	
	Conjecture \ref{measurerigidity} is still open.
	However, Einsiedler, Katok and Lindenstrauss proved the following important result toward Conjecture \ref{measurerigidity} in \cite{EKL06}. 
	We remark that, for $a\in A$ and an $a$-invariant Borel probability measure $\mu$ on $X$, we write $h_\mu(a)$ for the measure-theoretic entropy of the action of $a$ on $X$ with respect to $\mu$.
	
	\begin{thm}[{\cite[Theorem 1.3 and Corollary 1.4]{EKL06}}]\label{entropy_measurerigidity}
		Let $\mu$ be an $A$-invariant and ergodic Borel probability measure on $X=\SL(n,\R)/\SL(n,\Z)$ for $n\geq 3$. Assume that there exists $a\in A$ such that $h_\mu(a)>0$. Then $\mu$ is algebraic. In particular, if $n$ is prime, then $\mu$ is the Haar measure $m_X$ on $X$.
	\end{thm}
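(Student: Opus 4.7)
The plan is to extract non-trivial unipotent invariance of $\mu$ from the entropy hypothesis, leverage the higher-rank structure of $A$ to promote this to an algebraic measure via unipotent rigidity, and then analyze the possible intermediate subgroups when $n$ is prime.

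First, I would pass to the leaf-wise (conditional) measures of $\mu$ along the coarse Lyapunov foliations of the $A$-action, in the framework of Einsiedler--Lindenstrauss. By the Ledrappier--Young entropy formula adapted to homogeneous dynamics, the hypothesis $h_\mu(a)>0$ implies that for some root $\alpha$ of $A$ contracted by $a$, the conditional measure of $\mu$ along the one-parameter root subgroup $U^\alpha$ is non-atomic on a positive-measure set of leaves. This is the entry point: positive entropy forces some non-triviality of the conditionals on unstable leaves, but non-atomicity is strictly weaker than genuine invariance.

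The main obstacle, and the heart of the proof, is to upgrade non-atomicity of these leaf-wise measures to actual translation invariance of $\mu$ under a positive-dimensional unipotent subgroup. This is exactly where the hypothesis $n\geq 3$, i.e., $\mathrm{rank}(A)\geq 2$, enters. For each root $\alpha$ there is a non-trivial element $b\in (\ker\alpha)\cap A$ that normalizes $U^\alpha$ without contracting or expanding it; the existence of such $b$ is precisely the higher-rank condition. Using a recurrence/self-similarity argument in the spirit of Host--Rudolph's analysis of $\times 2,\times 3$-invariant measures on $\T$, adapted to the homogeneous setting by Einsiedler--Katok, one forces the leaf-wise measure on $U^\alpha$ to be the Haar measure on some non-trivial closed subgroup. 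Aggregating this across the roots, $\mu$ becomes invariant under a positive-dimensional closed unipotent subgroup $H\subset G$.

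Once $H$-invariance is established, I would invoke Ratner's measure classification for unipotent flows: $\mu$ is supported on a single closed orbit of some connected closed subgroup $L$ with $A\cdot H\subset L\subset G$, and is the $L$-invariant probability measure there. This proves $\mu$ is algebraic. For the prime case, one then classifies the closed connected $\Q$-subgroups $L$ of $\SL(n,\R)$ containing $A$ that admit a closed orbit of finite volume in $X$: when $n$ is prime, the block/flag structures that would produce a proper intermediate $L$ require a non-trivial factorization of $n$, so the only options are $L=A$ (yielding periodic $A$-orbits, which carry zero entropy for every element of $A$) and $L=G$ (yielding $m_X$). Since $h_\mu(a)>0$ excludes the first alternative, $\mu=m_X$.
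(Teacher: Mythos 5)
This theorem is not proved in the paper at all: it is cited verbatim from \cite{EKL06} (Theorem 1.3 and Corollary 1.4) and used as a black box in the proofs of Theorem \ref{converge_high_entrpy} and Theorem \ref{diagonalempirical}. So there is no ``paper's own proof'' to compare against; what follows is a comment on the merits of your sketch as an outline of the actual argument of Einsiedler--Katok--Lindenstrauss.

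The skeleton you give (positive entropy $\Rightarrow$ non-trivial leaf-wise conditionals on some coarse Lyapunov root subgroup $\Rightarrow$ upgrade to genuine unipotent invariance using a rank-two element of $A$ centralizing that root $\Rightarrow$ Ratner classification $\Rightarrow$ intermediate-subgroup analysis for $n$ prime) is the right shape, and the first and last steps are essentially as you describe. But the middle step is where the entire content of \cite{EKL06} lives, and your account collapses it into a single ``recurrence/self-similarity argument in the spirit of Host--Rudolph, adapted by Einsiedler--Katok.'' That is a genuine gap, not just a compression. There are in fact \emph{two} distinct mechanisms, and neither is sufficient alone: the \emph{high-entropy} method of Einsiedler--Katok (cf.\ \cite{EK03}), which exploits noncommutativity between several coarse Lyapunov subgroups whose conditionals are simultaneously non-trivial and hence needs the entropy to be large enough to guarantee this; and Lindenstrauss' \emph{low-entropy} method, which handles the regime where only one Lyapunov direction sees non-atomic conditionals and uses a much more delicate recurrence argument along the $A$-direction that fixes that root. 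The main achievement of \cite{EKL06} is precisely the combination of the two, handling the crossover. Presenting the step as a routine Host--Rudolph transplant makes the hardest part of the theorem look automatic. A second, smaller imprecision: once $H$-invariance is obtained, $\mu$ is $A$-ergodic but not a priori $H$-ergodic, so one cannot directly read off a single closed $L$-orbit from Ratner; one must pass through the $H$-ergodic decomposition and use that $A$ normalizes $H$ and permutes its ergodic components. This is standard but deserves mention. The prime-case classification as you state it is essentially correct in spirit, though the assertion that the only possibilities are $L=A$ or $L=G$ relies on a concrete classification of the $\Q$-subgroups containing $A$ with finite-volume closed orbits, which in \cite{EKL06} is carried out explicitly rather than by a one-line counting argument.
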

	
	Using Theorem \ref{entropy_measurerigidity} and Proposition \ref{relations_diag_little}, they proved Theorem \ref{exception_Littlewood}. We notice that Theorem \ref{entropy_measurerigidity} is also essential for the proof of our main results.
	
	\subsection{The empirical measures with respect to the diagonal action}\label{subsec_empirical_diagonal}
	
	We prove Theorem \ref{maintheorem} by studying {\it the empirical measures} with respect to the diagonal action. In the rest of this paper, we only consider the case of $n=3$, that is, $X=G/\Gamma=\SL(3,\R)/\SL(3,\Z)$.
	For $x\in X$ and $T>0$, we define the Borel probability measure $\delta^T_{A^+, x}$ on $X$ by
	$$
	\delta^T_{A^+,x}=\frac{1}{T^2}\int_{[0,T]^2}\delta_{a_{s,t}x}\ dsdt
	$$
	and call it the $T$-empirical measure of $x$ with respect to the action of $A^+$.
	Here, for $x\in X$, $\delta_x$ denotes the probability measure on $X$ supported by the one-point set $\{x\}$.
	For a sequence $\{\mu_k\}_{k=1}^\infty$ of Borel probability measures on $X$ and a finite Borel measure $\mu$ on $X$, we say that $\{\mu_k\}_{k=1}^\infty$ converges to $\mu$ if
	$$
	\int_Xf\ d\mu_k\xrightarrow[k\to\infty]{}\int_Xf\ d\mu
	$$
	for every $f\in C_0(X)$ and write $\mu_k\to\mu$. Then, it can be seen that $\mu(X)\leq 1$. Let $\widetilde{X}=X\sqcup\{\infty\}$ be the one-point compactification of $X$, then $\widetilde{X}$ is compact and metrizable. If we regard each $\mu_k$ as a Borel probability measure on $\widetilde{X}$ such that $\mu_k(\{\infty\})=0$, then the convergence of $\{\mu_k\}_{k=1}^\infty$ to $\mu$ is equivalent to that the sequence $\{\mu_k\}_{k=1}^\infty$ of Borel probability measures on $\widetilde{X}$ converges to the Borel probability measure $\mu+(1-\mu(X))\delta_\infty$ on $\widetilde{X}$ with respect to the weak* topology.
	For $x\in X$ and a finite Borel measure $\mu$ on $X$, we say that $\delta^T_{A^+,x}$ accumulates as $T\to\infty$ to $\mu$ if there exists a sequence $\{T_k\}_{k=1}^\infty\subset \R_{>0}$ such that $T_k\to\infty$ as $k\to\infty$ and $\{\delta^{T_k}_{A^+,x}\}_{k=1}^\infty$ converges to $\mu$. Then, it can be seen that $\mu$ is $A$-invariant.
	
	The following theorem plays the essential role for the proof of Theorem \ref{maintheorem}. Before we state the theorem, we prepare several notations. We write
	$$
	a_1=a_{1,0}=
	\begin{pmatrix}
		e^{-1}&0&0\\0&e&0\\0&0&1
	\end{pmatrix}
	\quad{\rm and}\quad
	a_2=a_{0,1}=
	\begin{pmatrix}
		e^{-1}&0&0\\0&1&0\\0&0&e
	\end{pmatrix}
	$$
	and these are the standard basis of $A^+$. We notice that $U$ is isomorphic to $\R^2$ through $\tau_{\alpha,\beta}$ and we define the metric on $U$ by $d_U(u,v)=\max\{|u_1-v_1|,|u_2-v_2|\}$. For $\rho>0$, we write $B^U_\rho=\left\{u\in U\left|\ d_U(e,u)<\rho\right.\right\}$.
	Then we state the theorem as follows.
	
	\begin{thm}\label{diagonalempirical}
		Let $x_0\in X$ and $0<\gamma<1$. We write $Z_{x_0}(\gamma)$ for the set of $u\in\overline{B^U_1}$ such that $\delta^T_{A^+,ux_0}$ accumulates as $T\to\infty$ to some $A$-invariant finite Borel measure $\mu$ on $X$ such that
		$$
		1-\gamma<\mu(X)\leq1\quad{\it and}\quad h_{\widehat\mu}(a_1)\leq \gamma,
		$$
		where $\widehat{\mu}=\mu(X)^{-1}\mu$ is the $A$-invariant Borel probability measure on $X$ obtained by normalizing $\mu$.
		Then we have
		$$
		\dim_HZ_{x_0}(\gamma)\leq15\sqrt{\gamma}.
		$$
	\end{thm}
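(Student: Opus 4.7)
The plan is an entropy--dimension covering argument in $U$. Because $a_1\tau_{\alpha,\beta}a_1^{-1}=\tau_{e^2\alpha,e\beta}$, the pullback to the $U$-leaf through $x_0$ of any $(n,r)$-Bowen ball for $a_1$ is contained in a rectangle in $U$ of dimensions at most $re^{-2n}\times re^{-n}$; the entropy hypothesis $h_{\widehat{\mu}}(a_1)\le\gamma$ controls, via Katok's entropy formula, the number of such Bowen balls needed.

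Concretely, fix $u\in Z_{x_0}(\gamma)$ and an associated accumulation measure $\mu$, and set $\widehat{\mu}=\mu/\mu(X)$. For any small $r,\delta>0$, Katok's formula applied to the $a_1$-invariant probability $\widehat{\mu}$ gives a compact $K_{\widehat{\mu}}\subset X$ with $\widehat{\mu}(K_{\widehat{\mu}})>1-\delta$ covered, for all sufficiently large $n$, by at most $e^{n(\gamma+\varepsilon(r,\delta))}$ $(n,r)$-Bowen balls for $a_1$, where $\varepsilon\to 0$ as $r,\delta\to 0$. Using the Fubini decomposition
$$
\delta^T_{A^+,ux_0}=\frac{1}{T}\int_0^T\delta^T_{a_1,\,a_2^t ux_0}\,dt
$$
together with $\delta^{T_k}_{A^+,ux_0}(K_{\widehat{\mu}})\to\widehat{\mu}(K_{\widehat{\mu}})>1-\delta$, a Markov-type pigeonhole selects, for each large $n$, a shift $t_0\in[0,T_k]$ for which the $a_1$-orbit of $a_2^{t_0}ux_0$ spends fraction $>1-2\delta$ of its first $n$ iterates in $K_{\widehat{\mu}}$. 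Allowing $2\delta n$ ``misses'' and summing over Bowen-ball codes yields at most $e^{n(\gamma+o(1))}$ admissible codes; each code confines $a_2^{t_0}ux_0$, and hence $ux_0$ (after the contracting action of $a_2^{-t_0}$), to a rectangle in $U$ of $U$-diameter $\le re^{-n}$. Taking a union over discretized values of $t_0\in[0,T_k]$ covers $Z_{x_0}(\gamma)$ (for this fixed $\widehat{\mu}$) by $\le T_k\cdot e^{n(\gamma+o(1))}$ rectangles of $U$-diameter $\le re^{-n}$, which via $\mathcal{H}^s_{re^{-n}}\le T_k\cdot e^{n(\gamma+o(1)-s)}\to 0$ for $s>\gamma$ gives the per-$\widehat{\mu}$ bound $\dim_H\le\gamma+o(1)$.

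The principal obstacle, and the source of the explicit factor $15\sqrt{\gamma}$, is the uniformity over the $u$-dependent choice of $\widehat{\mu}$ together with the simultaneous escape-of-mass allowance $\mu(X)>1-\gamma$. I would decompose $Z_{x_0}(\gamma)$ into a non-escape part, on which a compactness-and-approximation scheme in the space of $A$-invariant probability measures with $a_1$-entropy $\le\gamma$ reduces the above to countably many reference covers, and an escape part, on which the $A^+$-orbit spends at least a $\gamma$-fraction of its $2$-parameter time in the cusp of $X$ and whose Hausdorff dimension is bounded via quantitative non-divergence of Kleinbock--Margulis type on $X$. The final step is to optimize the parameters $n$, $r$, $\delta$ against the escape scale $\gamma$; this balancing (and the combinatorial loss from the Fubini reduction of the $2$-parameter empirical convergence to $1$-parameter $a_1$-orbit data) is what produces the $\sqrt{\gamma}$ rate and the explicit constant $15$.
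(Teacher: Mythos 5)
Your high-level strategy -- reduce the two-parameter empirical convergence to a one-parameter $a_1$-orbit via a Fubini/Markov selection of a good shift $t_0$, bound low-entropy orbit segments, and translate the resulting codes into a cover of the $U$-leaf through $x_0$ by small rectangles -- is indeed the skeleton of the paper's argument, and you correctly identify the Markov loss in the Fubini step as the source of the $\sqrt{\gamma}$ rate. However there are two substantive gaps that your sketch leaves open, and the paper's actual proof uses quite different ingredients precisely to avoid them.

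First, your use of Katok's entropy formula requires a fixed reference measure $\widehat\mu$, but $\widehat\mu$ depends on $u$. You acknowledge this and propose to reduce to ``countably many reference covers'' via a compactness-and-approximation scheme in the space of $A$-invariant probability measures with $h(a_1)\le\gamma$, but this step cannot be made to work cheaply: $X$ is noncompact, $a_1$ is not expansive, and entropy is not upper semi-continuous on the relevant space of measures (the whole point of the escape-of-mass case is that a limit of orbit measures can lose both mass and entropy to the cusp). The paper avoids all of this by never invoking Katok's formula. Instead it codes orbits against a fixed finite partition $\alpha$ and cover $\beta_M$ of the one-point compactification $\widetilde X$, and applies Bowen's purely combinatorial lemma (counting sequences $c\in\{1,\dots,k\}^N$ with $H(\mathrm{dist}(c))\le t$, which is $e^{Nt+o(N)}$) to bound the number of admissible codes of the shifted $a_1$-orbit. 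No reference measure is needed; $\widehat\mu$ enters only via the Shannon--McMillan argument of Lemma~\ref{keyinclusion} that shows every $u\in Z_{x_0}(\gamma^2)$ admits, for infinitely many $N$, a shift $n<\gamma N$ whose $a_1$-orbit has small empirical code entropy and few cusp visits.

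Second, your treatment of the escape-of-mass part invokes quantitative non-divergence of Kleinbock--Margulis type, but that machinery bounds the \emph{measure} of the set of parameters whose orbit spends a lot of time in the cusp, not its Hausdorff dimension; and it applies to polynomial/unipotent trajectories, not to the two-parameter diagonal flow over a single base point. It does not directly produce a covering of $Z_{x_0}(\gamma)$ by sets of controlled diameter. The paper instead handles cuspidal excursions combinatorially inside the coding argument: the partition of $\widetilde X$ has one cell $X\setminus K$ for the cusp, the escape condition bounds the frequency of visits to that cell, and Lemma~\ref{partition_B} shows that each excursion into the cusp costs only a bounded factor (at most $4e^3$ per cusp step) of extra subdivisions of the coded set in $U$, using the injectivity radius on $K$ to control the contraction of $U$-rectangles under $a_1^{-1}$ along the non-cusp steps. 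This is also where the precise geometry you gloss over (that an $(n,r)$-Bowen ball restricted to a $U$-leaf is a rectangle of size $\sim re^{-2n}\times re^{-n}$) is actually established: it requires that the consecutive orbit points stay in a region with uniform injectivity radius, which fails exactly on the cusp excursions and hence must be tracked. Without replacing these two steps by the paper's combinatorial substitutes, the estimate $\dim_H\le 15\sqrt{\gamma}$ does not follow from your outline.
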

	
	Theorem \ref{diagonalempirical} is the analogy of \cite[Theorem 1.5]{Usu22} for $\times a,\times b$ action on $\R/\Z$
	to the diagonal action on $X=\SL(3,\R)/\SL(3,\Z)$.
	For $0<\gamma<1/2$, $Z_{x_0}(\gamma)$ for $x_0=e\Gamma$ in Theorem \ref{diagonalempirical} corresponds to the set $Z(\gamma)$ of exceptions in Theorem \ref{maintheorem}. In the next Section \ref{empirical_measures_Littlewood}, we study closely the relation between the empirical measures with respect to the diagonal action and Littlewood's conjecture and see that Theorem \ref{maintheorem} actually holds for $(\alpha,\beta)$ such that $\tau_{\alpha,\beta}$ is not in $Z_{e\Gamma}(\gamma)$. In Section \ref{section_proof_crutialthm}, we prove Theorem \ref{diagonalempirical}.
	
	\subsection*{Acknowledgement}
	The author is grateful to Masayuki Asaoka for telling him Theorem \ref{converge_high_entrpy} below and pointing out the connection between his previous work \cite {Usu22} and Littlewood's conjecture. He also thanks to Mitsuhiro Shishikura for his helpful advice.
	This work is supported by JST SPRING, Grant Number JPMJSP2110.
	
	\section{Behavior of the empirical measures and Littlewood's conjecture}\label{empirical_measures_Littlewood}
	
	\subsection{The empirical measures of non-exceptional points}
	
	In this section, we study the relation between behavior of the empirical measures with respect to the diagonal action and Littlewood's conjecture. More precisely, we show that, for $0<\gamma<1/2$, $(\alpha,\beta)$ such that $\tau_{\alpha,\beta}$ is in the set $\overline{B^U_1}\setminus Z_{e\Gamma}(\gamma)$ in Theorem \ref{diagonalempirical} satisfies the conclusion of Theorem \ref{maintheorem} and prove Theorem \ref{maintheorem} assuming Theorem \ref{diagonalempirical}. Let $(\alpha,\beta)\in\R^2$ and $\{T_k\}_{k=1}^\infty$ in $\R_{>0}$ such that $T_k\to\infty$ as $k\to\infty$ and assume that the sequence $\{\delta^{T_k}_{A^+,\tau_{\alpha,\beta}\Gamma}\}_{k=0}^\infty$ of the empirical measures converges to some $A$-invariant finite Borel measure $\mu$ on $X$. If $\tau_{\alpha,\beta}\in\overline{B^U_1}\setminus Z_{e\Gamma}(\gamma)$, we have the two cases:
	\begin{enumerate}
		\renewcommand{\labelenumi}{\rm{(\roman{enumi})}}
		\item $\mu$ satisfies $1-\gamma<\mu(X)\leq 1$ and $h_{\widehat{\mu}}(a_1)>\gamma$ for
		$\widehat{\mu}=\mu(X)^{-1}\mu$,
		\item $\mu$ satisfies $\mu(X)\leq 1-\gamma$.
	\end{enumerate}
	We call the case (i) {\it large entropy case}.
	The case (ii) is the case of, what we call, {\it escape of mass}.
	
	\begin{dfn}\label{def_escape_of_mass}
		Let $\{\mu_k\}_{k=1}^\infty$ be a sequence of Borel probability measures on $X$ and $0<\gamma\leq 1$. We say that $\{\mu_k\}_{k=1}^\infty$ exhibits $\gamma$-escape of mass if
		$$
		\limsup_{k\to\infty}\mu_k(K)\leq 1-\gamma
		$$
		for any compact subset $K$ of $X$.
	\end{dfn}
	
	Corresponding to (i) and (ii), the following Theorem \ref{converge_high_entrpy} and Theorem \ref{escape_of_mass} of quantitative results for Conjecture \ref{LittlewoodConjecture} are established.
	
	\begin{thm}\label{converge_high_entrpy}
		There exist constants $C_1>0$ and $0<\varepsilon_0<1/2$ which satisfy the following. Let $0<\gamma<1$, $\alpha,\beta\in\R$ and $\{T_k\}_{k=1}^\infty$ be a sequence in $\R_{>0}$ such that $T_k\to\infty$ as $k\to\infty$. Assume that, for $\tau_{\alpha,\beta}\Gamma\in X$, $\{\delta^{T_k}_{A^+,\tau_{\alpha,\beta}\Gamma}\}_{k=1}^\infty$ converges to an $A$-invariant finite Borel measure $\mu$ such that $1-\gamma<\mu(X)\leq 1$ and $h_{\widehat{\mu}}(a_1)>\gamma$.
		Then, for every $0<\varepsilon<\varepsilon_0$, we have
		$$
		\liminf_{k\to\infty}\frac{1}{T_k}\left|\left\{n\in \N\left|\ n<e^{2T_k}, n\langle n\alpha\rangle\langle n\beta\rangle<\varepsilon\right.\right\}\right|\geq C_1(1-\gamma)\gamma\varepsilon.
		$$
	\end{thm}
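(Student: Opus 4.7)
The plan is to translate the dynamical hypotheses on $\mu$ into the arithmetic count by passing through the Haar measure on $X$ via measure rigidity, and then converting the resulting cusp-time estimate into a count of good approximations $(n,p,q)$ using the geometry of the $A^+$-orbit in the cusp of $X = \SL(3,\R)/\SL(3,\Z)$.

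The first key step is to extract a Haar component of $\mu$. Decomposing $\widehat\mu$ into $A$-ergodic components and invoking Theorem \ref{entropy_measurerigidity} (noting that $n=3$ is prime, so every positive-entropy ergodic component must coincide with $m_X$), one obtains $\widehat\mu \geq (h_{\widehat\mu}(a_1)/h_{m_X}(a_1))\, m_X$; a direct Lyapunov computation gives $h_{m_X}(a_1) = 2+1+1 = 4$, so
$$
\mu \,\geq\, \mu(X)\widehat\mu \,\geq\, \frac{(1-\gamma)\gamma}{4}\, m_X.
$$

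Next, I set $\eta = \varepsilon^{1/3}$ and work with the cusp shell $K_\eta^{\ast} = \{x \in X : \min_{v\in x\setminus\{0\}}\|v\|_\infty \in [\eta/2,\eta)\}$, which by Mahler's criterion is relatively compact in $X$. A Siegel-type integration formula yields $m_X(K_\eta^{\ast}) \geq c_1\varepsilon$ for an absolute constant $c_1 > 0$, once $\eta$ is small enough that short vectors are almost surely unique up to sign. Approximating $\mathbf{1}_{K_\eta^{\ast}}$ from below by a compactly supported continuous function $f$ and using the assumed weak-$\ast$ convergence $\delta^{T_k}_{A^+,\tau_{\alpha,\beta}\Gamma}\to\mu$ together with the Haar lower bound, one obtains
$$
\liminf_{k\to\infty}\frac{1}{T_k^{2}}\,\mathrm{Leb}\bigl(\{(s,t)\in[0,T_k]^2 : a_{s,t}\tau_{\alpha,\beta}\Gamma\in K_\eta^{\ast}\}\bigr) \,\geq\, \int f\, d\mu \,\geq\, \frac{c_1}{8}(1-\gamma)\gamma\varepsilon.
$$

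The final step, and the \emph{main obstacle}, is to upper bound each individual $n$'s contribution to the above area. For $(s,t)$ in this set with $\eta<1/2$, the short vector is forced to be $v = a_{s,t}\tau_{\alpha,\beta}(n,p,q)^T$ for some $n\in\N$ with $p,q$ equal to the nearest integers to $n\alpha$, $n\beta$ (otherwise $|v_2|$ or $|v_3|$ would exceed $1/2>\eta$), and the positivity constraint $e^{-s-t}n < \eta$ forces $n < e^{s+t}\eta \leq e^{2T_k}$. The shell region
$$
S_n \,=\, \{(s,t)\in[0,T_k]^2 : \|a_{s,t}\tau_{\alpha,\beta}(n,p,q)^T\|_\infty \in [\eta/2,\eta)\}
$$
is the boundary band of a triangular region and decomposes into three strips of width $\log 2$ along the directions $s+t=\mathrm{const}$, $s=\mathrm{const}$, and $t=\mathrm{const}$; each such strip meets $[0,T_k]^2$ in Lebesgue measure at most $C_0 T_k$, so $\mathrm{Leb}(S_n) \leq 3C_0 T_k$. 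Summing over the at most $N_{T_k}(\varepsilon)$ contributing values of $n$ gives
$$
\frac{c_1}{8}(1-\gamma)\gamma\varepsilon T_k^{2} \,\leq\, \sum_{n} \mathrm{Leb}(S_n) \,\leq\, 3C_0\, T_k\cdot N_{T_k}(\varepsilon),
$$
whence $N_{T_k}(\varepsilon) \geq C_1(1-\gamma)\gamma\varepsilon T_k$ with $C_1 = c_1/(24\,C_0)$. The \emph{thin-shell choice} is essential: using the full ball $\{\|\cdot\|_\infty<\eta\}$ would produce $O(T_k^{2})$-sized contributions per $n$ and only a constant lower bound for $N_{T_k}(\varepsilon)$. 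Verifying the almost-sure uniqueness of the short vector (modulo an $O(\eta^6)$ correction absorbed by shrinking $\varepsilon_0$) and the continuous approximation of $\mathbf{1}_{K_\eta^{\ast}}$ are the remaining technical points.
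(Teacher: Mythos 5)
Your proof is correct and follows the same skeleton as the paper's: use Theorem \ref{entropy_measurerigidity}, the ergodic decomposition, $h_{m_X}(a_1)=4$, and $\mu(X)>1-\gamma$ to obtain $\mu\geq\frac{(1-\gamma)\gamma}{4}m_X$; then choose a ``thin'' test set in $X$ whose Haar measure is $\gtrsim\varepsilon$ but whose $A^+$-orbit hitting-time region has area $O(T_k)$ for each individual $n$; finally compare the two. Where you diverge from the paper is in the choice of test set and in the resulting technicalities. The paper's $V_\varepsilon$ asks for a lattice vector in the box $(e^{-\eta_0},1)\times(-\sqrt\varepsilon,\sqrt\varepsilon)^2$ --- only two coordinates scale with $\varepsilon$, and the first coordinate is pinned to a fixed-width window $(e^{-\eta_0},1)$. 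This makes the per-$n$ region a \emph{single} strip $\log n<s+t<\log n+\eta_0$ of constant width $\eta_0$ perpendicular to the diagonal (so a one-line Fubini projection onto $s+t$ suffices), and the Haar lower bound $m_X(V_\varepsilon)\geq C\varepsilon$ follows elementarily from a local chart near the identity in $G$, with no need for uniqueness of short vectors or the Siegel mean value theorem. Your shell $K_\eta^*=\{\min_{v\neq0}\|v\|_\infty\in[\eta/2,\eta)\}$ with $\eta=\varepsilon^{1/3}$ achieves the same thing --- $S_n$ is the difference of two nested similar triangles and lies in three strips of width $\log 2$ --- but the Haar lower bound for the shell requires Siegel's formula plus a second-moment (or short-vector uniqueness) argument to absorb the $O(\eta^6)$ error, machinery the paper entirely avoids. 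Your estimate of $\mathrm{Leb}(S_n\cap[0,T_k]^2)\leq 3C_0T_k$ is sound, the cover $\{(s,t):a_{s,t}\tau_{\alpha,\beta}\Gamma\in K_\eta^*\}\subset\bigcup_n S_n$ is correct (the short vector must have the form $\pm(n,p,q)$ with $n\geq 1$ and $p,q$ the nearest integers because $\eta<1/2$), and your closing observation --- that a full ball would give $O(T_k^2)$ per $n$ and hence only a constant count --- is precisely the insight motivating the paper's narrow first-coordinate window. So the proof is valid; the paper's variant is slightly more economical because it sidesteps the Siegel formula and the uniqueness correction.
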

	
	\begin{thm}\label{escape_of_mass}
		Let $0<\gamma<1$, $\alpha,\beta\in\R$ and $\{T_k\}_{k=1}^\infty$ be a sequence in $\R_{>0}$ such that $T_k\to\infty$ as $k\to\infty$. Assume that, for $\tau_{\alpha,\beta}\Gamma\in X$, $\{\delta^{T_k}_{A^+,\tau_{\alpha,\beta}\Gamma}\}_{k=1}^\infty$ exhibits $\gamma$-escape of mass. Then, for every $0<\varepsilon<1/2$, we have
		$$
		\liminf_{k\to\infty}\frac{1}{T_k}\left|\left\{n\in \N\left|\ n<e^{2T_k}, n\langle n\alpha\rangle\langle n\beta\rangle<\varepsilon\right.\right\}\right|\geq\frac{\gamma}{3\log2}.
		$$
	\end{thm}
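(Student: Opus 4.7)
The plan is to translate the $\gamma$-escape of mass into an areal statement via Mahler's compactness criterion, read off an integer $n$ with $n\langle n\alpha\rangle\langle n\beta\rangle<\varepsilon$ from each short lattice vector arising along the orbit, and then count the distinct integers produced using an explicit area-per-witness bound together with a dyadic decomposition by approximation depth.

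Fix $\eta\in(0,1)$ with $\eta^3\le\varepsilon$ (for instance $\eta=\varepsilon^{1/3}$). The set $K_\eta=\{g\Gamma\in X:\text{every nonzero }v\in g\Z^3\text{ satisfies }\|v\|_\infty\ge\eta\}$ is compact by Mahler's criterion, so the hypothesis yields
\[
\liminf_{k\to\infty}\frac{1}{T_k^2}\bigl|\bigl\{(s,t)\in[0,T_k]^2:a_{s,t}\tau_{\alpha,\beta}\Gamma\notin K_\eta\bigr\}\bigr|\ge\gamma.
\]
For every such $(s,t)$ one can pick $(n,p,q)\in\Z^3\setminus\{0\}$ with $\|a_{s,t}\tau_{\alpha,\beta}(n,p,q)^\top\|_\infty<\eta$; since $\eta<1$ and $s,t\ge0$, the case $n=0$ is immediately ruled out (it would force some $e^s|p|$ or $e^t|q|$ to exceed $1$), and choosing $p,q$ nearest to $-n\alpha,-n\beta$ the three coordinate bounds become $|n|<\eta e^{s+t}\le e^{2T_k}$ and $|n|\langle n\alpha\rangle\langle n\beta\rangle<\eta^3\le\varepsilon$, so $n$ is of the type counted in the theorem.

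To count distinct such integers, group the cusp times by their witness. For each nonzero $n$ (with $p,q$ the nearest integers to $-n\alpha,-n\beta$) let
\[
B_n=\bigl\{(s,t)\in[0,T_k]^2:s+t>\log(|n|/\eta),\ s<\log(\eta/\langle n\alpha\rangle),\ t<\log(\eta/\langle n\beta\rangle)\bigr\},
\]
so that the cusp region inside $[0,T_k]^2$ is covered (up to a null set) by $\bigcup_{n\ne 0}B_n$. Setting $a_n=\log(|n|/\eta)$, $\tilde b_n=\min(\log(\eta/\langle n\alpha\rangle),T_k)$, $\tilde c_n=\min(\log(\eta/\langle n\beta\rangle),T_k)$, a direct case analysis of how the half-plane $s+t>a_n$ meets the rectangle $[0,\tilde b_n]\times[0,\tilde c_n]$ yields the pointwise bound $|B_n|\le T_k(\tilde b_n+\tilde c_n-a_n)_+$. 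Combining this with the first display gives $\sum_n(\tilde b_n+\tilde c_n-a_n)_+\ge\gamma T_k+o(T_k)$, and organising the summation by dyadic depth layers of $(\tilde b_n+\tilde c_n-a_n)/\log 2$ is intended to extract the lower bound $\gamma T_k/(3\log 2)$ on the total count.

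The main obstacle is this last conversion of a weighted sum into an unweighted count with the sharp constant $1/(3\log 2)$: a pointwise bound $\tilde b_n+\tilde c_n-a_n\le 3\log 2$ is false in general, as an exceptionally well-approximated $n$ can make this quantity as large as $2T_k$. The resolution should be to combine the bound at level $\eta$ with the escape of mass at the nested deeper cusp levels $\eta,2^{-1/3}\eta,2^{-2/3}\eta,\dots$, exploiting that the ``deep'' $n$'s at one level become ``shallow'' at the next; the $3$ in $3\log 2$ reflects the three Lyapunov directions of $a_{s,t}$ on $\R^3$ and $\log 2$ comes from the dyadic refinement. Making this balancing rigorous, and verifying that the integers extracted at different scales are genuinely distinct, is the principal analytic task.
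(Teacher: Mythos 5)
Your setup follows the paper's strategy closely: pass to the cusp region via Mahler's criterion, observe that the cusp set is a union over $\boldsymbol n\in\Z^3$ of triangles (your $B_n$, the paper's $d_{\varepsilon,\boldsymbol n}$), and bound the area of each triangle intersected with $[0,T_k]^2$ by $T_k$ times the length of its projection under $(s,t)\mapsto s+t$. Up to that point the argument is sound and is essentially what the paper does.

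The gap is the step you yourself flag as ``the principal analytic task,'' and the fix you sketch is not the one that actually works. You correctly note that a pointwise bound $\tilde b_n+\tilde c_n-a_n\le 3\log 2$ fails, and you propose to resolve it by using escape of mass at nested cusp depths $\eta,2^{-1/3}\eta,\dots$ so that ``deep'' $n$'s reappear at shallower levels. That route has a serious problem: the hypothesis gives escape of mass at a \emph{fixed} compact exhaustion, and you only get to choose a single $\eta$ once $\varepsilon$ is fixed; there is no guarantee of comparable escape at a hierarchy of finer scales, and even granting it you would still only be counting how many $n$ reach each depth, not producing more integers. The paper's resolution is instead a multiplicative bootstrapping: if $n$ is a witness whose projected interval has length $\lambda_n$, then for every integer $p$ with $0\le p\log 2\le\lambda_n/3$ the dyadic multiple $2^pn$ is \emph{also} a witness, because $\langle 2^pn\alpha\rangle\le 2^p\langle n\alpha\rangle$, $\langle 2^pn\beta\rangle\le 2^p\langle n\beta\rangle$, and the constraint $(2^p)^3\le e^{\lambda_n}$ keeps the product below $\eta^3$ while $2^pn<e^{2T_k}$. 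This is where the $3\log 2$ comes from (not from Lyapunov exponents), and it converts the length lower bound $\sum_n\lambda_n\gtrsim\gamma T_k$ into a count of integers. Distinctness of the $2^pn$'s is then handled by grouping the $n$'s into chains under dyadic overlap (the paper's equivalence classes $\Xi_l$) and observing that within a chain the consecutive projected intervals interleave in a controlled way, so the union of the dyadic progressions is itself a single dyadic progression of the right length. Without this idea your proposal does not close; you would need to find and prove an analogue of the paper's Lemma about $\Lambda(n)$ and the chain decomposition.
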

	
	We prove Theorem \ref{converge_high_entrpy} and Theorem \ref{escape_of_mass} in Subsection \ref{subsec_high_entropy} and \ref{subseq_escape_of_mass}, respectively.
	Here we show that Theorem \ref{maintheorem} is established if we assume that Theorem \ref{converge_high_entrpy}, Theorem \ref{escape_of_mass} and Theorem \ref{diagonalempirical} hold.
	
	\begin{proof}[Proof of Theorem \ref{maintheorem}]
		Assume Theorem \ref{converge_high_entrpy}, Theorem \ref{escape_of_mass} and Theorem \ref{diagonalempirical} hold.
		Let $C_1>0,0<\varepsilon_0<1/2$ be constants in Theorem \ref{converge_high_entrpy} and $0<\gamma<1/2$.
		We define $Z(\gamma)\subset [-1,1]^2$ as the set of $(\alpha,\beta)\in[-1,1]^2$ such that $\tau_{\alpha,\beta}\in Z_{e\Gamma}(\gamma)$, where $Z_{e\Gamma}(\gamma)$ is the subset of $\overline{B^U_1}$ in Theorem \ref{diagonalempirical} for $e\Gamma\in X$. Since $\R^2$ and $U$ are isometric through $\tau_{\alpha,\beta}$, we have $\dim_H Z(\gamma)=\dim_H Z_{e\Gamma}(\gamma)\leq 15\sqrt{\gamma}$. Let $(\alpha,\beta)\in[-1,1]^2\setminus Z(\gamma)$ and $0<\varepsilon<\varepsilon_0$. We show that
		\begin{equation}\label{quant_little}
			\liminf_{T\to\infty}\frac{1}{T}\left|\left\{n\in\N\left|\ n<e^{2T}, n\langle n\alpha\rangle\langle n\beta\rangle<\varepsilon\right.\right\}\right|\geq \min\left\{C_1(1-\gamma)\gamma\varepsilon,\frac{\gamma}{3\log2}\right\}.
		\end{equation}
		
		Suppose the inequality (\ref{quant_little}) does not hold. Then there exist a sequence $\{T_k\}_{k=1}^\infty$ in $\R_{>0}$ and $0<\eta<1$ such that $T_k\to\infty$ as $k\to\infty$ and
		\begin{equation}\label{contradiction_quant_little}
			\frac{1}{T_k}\left|\left\{n\in\N\left|\ n<e^{2T_k}, n\langle n\alpha\rangle\langle n\beta\rangle<\varepsilon\right.\right\}\right|\leq \min\left\{C_1(1-\gamma)\gamma\varepsilon,\frac{\gamma}{3\log2}\right\}-\eta
		\end{equation}
		for all $k$. Since the space of Borel probability measures on $\widetilde{X}=X\sqcup\{\infty\}$ is compact with respect to the weak* topology, by taking some subsequence, we can assume that the sequence $\{\delta^{T_k}_{A^+,\tau_{\alpha,\beta}\Gamma}\}_{k=1}^\infty$ of the empirical measures converges to some $A$-invariant finite Borel measure $\mu$ on $X$. Since $\tau_{\alpha,\beta}\in \overline{B^U_1}\setminus Z_{e\Gamma}(\gamma)$, we have $\mu(X)\leq 1-\gamma$ or $h_{\widehat{\mu}}(a_1)>\gamma$. If $\mu(X)\leq 1-\gamma$, then it follows that $\{\delta^{T_k}_{A^+,\tau_{\alpha,\beta}\Gamma}\}_{k=1}^\infty$ exhibits $\gamma$-escape of mass and, from Theorem \ref{escape_of_mass},
		$$
		\liminf_{k\to\infty}\frac{1}{T_k}\left|\left\{n\in \N\left|\ n<e^{2T_k}, n\langle n\alpha\rangle\langle n\beta\rangle<\varepsilon\right.\right\}\right|\geq\frac{\gamma}{3\log2}.
		$$
		This contradicts (\ref{contradiction_quant_little}). Hence, we have $1-\gamma<\mu(X)\leq 1$ and
		$h_{\widehat{\mu}}(a_1)>\gamma$. However, from Theorem \ref{converge_high_entrpy}, it follows that
		$$
		\liminf_{k\to\infty}\frac{1}{T_k}\left|\left\{n\in \N\left|\ n<e^{2T_k}, n\langle n\alpha\rangle\langle n\beta\rangle<\varepsilon\right.\right\}\right|\geq C_1(1-\gamma)\gamma\varepsilon
		$$
		and we have again a contradiction to (\ref{contradiction_quant_little}). Therefore, we showed the inequality (\ref{quant_little}).
		
		We can take a constant $C$ such that $\min\left\{C_1(1-\gamma)\gamma\varepsilon, \gamma/3\log2\right\}/2\geq C\gamma\varepsilon$ for any $0<\varepsilon<1/2$ and $0<\gamma<1/2$. By putting $T=\log N/2$ in (\ref{quant_little}), we have
		$$
		\liminf_{N\to\infty}\frac{1}{\log N}\left|\left\{n\in\{1,\dots,N-1\}\left|\ n\langle n\alpha\rangle\langle n\beta\rangle<\varepsilon\right.\right\}\right|\geq C\gamma\varepsilon
		$$
		and obtain the conclusion of Theorem \ref{maintheorem}. By replacing $Z(\gamma)$ to $\bigcup_{(k,l)\in \Z^2}(Z(\gamma)+(k,l))$, we obtain Theorem \ref{maintheorem}.
	\end{proof}
	
	\subsection{The empirical measures of large entropy case}\label{subsec_high_entropy}
	
	We prove Theorem \ref{converge_high_entrpy}. We need the following proposition about the representation of entropy for generalized convex combinations of invariant measures.
	
	\begin{prop}\label{entropyrep}
		Let $Y$ be a compact metric space and $T:Y\to Y$ be a continuous map. We write $M_T(Y)$ for the space of all $T$-invariant Borel probability measures on $Y$ with the weak* topology.
		Suppose $\mu\in M_T(Y)$ and $\tau$ to be a Borel probability measure on $M_T(Y)$ such that
		\begin{equation*}
			\mu=\int_{M_T(Y)} m\ d\tau(m),
		\end{equation*}
		that is,
		$$
		\int_Yf\ d\mu=\int_{M_T(Y)}\int_Yf\ dm\ d\tau(m)
		$$
		for all $f\in C(Y)$. Then we have
		$$
		h_\mu(T)=\int_{M_T(Y)}h_m(T)\ d\tau(m).
		$$
	\end{prop}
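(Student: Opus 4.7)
The plan is to reduce the claim to the affinity of entropy under the ergodic decomposition (Jacobs' theorem), applied twice. Throughout, $M_T^e(Y) \subset M_T(Y)$ denotes the subset of ergodic measures, which is a Borel subset of the compact metrizable space $M_T(Y)$.

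First I would invoke two standard facts from abstract ergodic theory. The map $m \mapsto h_m(T)$ is Borel measurable on $M_T(Y)$: writing $h_m(T) = \sup_{\xi} h_m(T,\xi)$ over a countable family of finite Borel partitions with pieces of arbitrarily small diameter suffices, once one checks that $m \mapsto H_m(\xi_0^{n-1})$ is measurable. Next, given any $\nu \in M_T(Y)$, there is a Borel probability measure $\tau_\nu$ on $M_T^e(Y)$ such that $\nu = \int m \, d\tau_\nu(m)$ (ergodic decomposition), and Jacobs' theorem states
\[
h_\nu(T) = \int_{M_T^e(Y)} h_m(T) \, d\tau_\nu(m).
\]
The map $m \mapsto \tau_m$ that sends each invariant measure to its ergodic decomposition is also Borel measurable, by the measurable selection theorems underlying disintegration.

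Next I would run the iterated decomposition. For each $m \in M_T(Y)$ in the support of $\tau$, write $m = \int m' \, d\tau_m(m')$ with $\tau_m$ supported on $M_T^e(Y)$. Then, for every $f \in C(Y)$,
\[
\int_Y f \, d\mu = \int_{M_T(Y)} \int_Y f \, dm \, d\tau(m) = \int_{M_T(Y)} \int_{M_T^e(Y)} \int_Y f \, dm' \, d\tau_m(m') \, d\tau(m),
\]
so if we set $\sigma = \int_{M_T(Y)} \tau_m \, d\tau(m)$, viewed as a Borel probability measure on $M_T^e(Y)$, we obtain $\mu = \int m' \, d\sigma(m')$. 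By the uniqueness of the ergodic decomposition, $\sigma = \tau_\mu$.

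Finally, applying Jacobs' theorem to $\mu$ and then to each $m$, together with Fubini for the nonnegative measurable integrand $h_{m'}(T)$,
\[
h_\mu(T) = \int_{M_T^e(Y)} h_{m'}(T) \, d\tau_\mu(m') = \int_{M_T(Y)} \int_{M_T^e(Y)} h_{m'}(T) \, d\tau_m(m') \, d\tau(m) = \int_{M_T(Y)} h_m(T) \, d\tau(m),
\]
which is the desired identity. The main obstacle is the measurability bookkeeping: verifying Borel measurability of $m \mapsto h_m(T)$ and of $m \mapsto \tau_m$ so that the iterated integral and the application of Fubini are justified. Both are standard but easy to gloss over; once they are in place, the argument reduces to the classical affinity of entropy on the simplex $M_T(Y)$.
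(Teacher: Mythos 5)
Your proof is correct, but it takes a genuinely different route from the one the paper sketches. The paper's proof is a direct adaptation of Walters' proof of Jacobs' theorem (\cite[Theorem 8.4]{Wal82}), replacing the specific ergodic-decomposition measure by an arbitrary barycentric representation $\tau$ and invoking \cite[Lemma 10.7]{Phe01} from Choquet theory to justify passing the (bounded affine, appropriately semicontinuous-approximable) entropy functional through the integral. In contrast, you treat Jacobs' theorem itself as a black box and reduce the general statement to it by a double disintegration: decompose each $m$ in the support of $\tau$ into ergodics via $\tau_m$, observe that $\sigma=\int\tau_m\,d\tau(m)$ is then a representation of $\mu$ by ergodic measures, identify $\sigma$ with the canonical ergodic decomposition $\tau_\mu$ by uniqueness, and conclude by Tonelli. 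The trade-offs are roughly these: the paper's route avoids the uniqueness of the ergodic decomposition and the measurability of $m\mapsto\tau_m$, at the price of redoing the entropy estimates in Walters' argument for a general $\tau$; your route is more modular and uses Jacobs' theorem off the shelf, but it relies on two measurability facts ($m\mapsto h_m(T)$ Borel, $m\mapsto\tau_m$ Borel) and on uniqueness of the ergodic decomposition, which you flag correctly as the bookkeeping burden. One small point worth making explicit: since $h_m(T)$ may be $+\infty$ for continuous maps on compact metric spaces, the interchange of integrals should be cited as Tonelli for a nonnegative Borel integrand rather than Fubini, as you in effect do; with that understanding both sides of the claimed identity are allowed to be $+\infty$ and the argument is watertight.
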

	
	This proposition is a generalization of \cite[Theorem 8.4]{Wal82} to any generalized convex combination of invariant measures and we can prove it in the same way as \cite[Theorem 8.4]{Wal82} by applying \cite[Lemma 10.7]{Phe01} to any generalized convex combination. We notice that we can extend the diagonal action on $X$ to the continuous action of $A$ on $\widetilde{X}=X\sqcup\{\infty\}$ with a fixed point $\infty$ and consider an $A$-invariant finite Borel measure $\mu$ on $X$ with $\mu(X)\leq1$ as an $A$-invariant Borel probability measure on $\widetilde{X}$ by letting $\mu(\{\infty\})=1-\mu(X)$. 
	Since $\widetilde{X}$ is compact and metrizable, we can apply Proposition \ref{entropyrep} to the action of $a$ on $\widetilde{X}$ for $a\in A$.
	If $\mu$ is $a$-invariant, then we have $h_\mu(a)=(1-\mu(X))h_{\delta_\infty}(a)+\mu(X)h_{\widehat{\mu}}(a)=\mu(X)h_{\widehat{\mu}}(a)$, where the left hand side is the measure-theoretic entropy of the action of $a$ on $\widetilde{X}$ with respect to $\mu$ and, as above, $\widehat{\mu}$ in the right hand side is the Borel probability measure on $X$ obtained by normalizing $\mu$.
	
	\begin{proof}[Proof of Theorem \ref{converge_high_entrpy}]
		Let $0<\gamma<1$, $\alpha,\beta\in\R$, $\{T_k\}_{k=1}^\infty\subset\R_{>0}$ and $\mu$ satisfy the assumption in Theorem \ref{converge_high_entrpy}. First, we take the ergodic decomposition of $\mu$ with respect to the diagonal action, that is, we regard $\mu$ as an $A$-invariant Borel probability measure on $\widetilde{X}$ and take the ergodic decomposition of $\mu$ with respect to the continuous action of $A$ on the compact and metrizable space $\widetilde{X}$. Let $E_A(X)$ (resp. $E_A(\widetilde{X})$) be the set of $A$-invariant and ergodic Borel probability measures on $X$ (resp. $\widetilde{X}$). Then there exists a Borel probability measure $\widetilde{\tau}$ on $E_A(\widetilde{X})$ such that
		$$
		\mu=\int_{E_A(\widetilde{X})}m\ d\widetilde{\tau}(m).
		$$
		on $\widetilde{X}$. Since $\infty\in\widetilde{X}$ is a fixed point of the action of $A$, we have
		\begin{equation}\label{ergodicdecomposition}
			\mu=(1-\mu(X))\delta_\infty+\mu(X)\int_{E_A(X)}m\ d\tau(m),
		\end{equation}
		where $\tau$ is the Borel probability measure on $E_A(X)$ obtained by normalizing the restriction of $\widetilde{\tau}$ to $E_A(X)$. We apply Proposition \ref{entropyrep} to (\ref{ergodicdecomposition}) and the action of $a_1$ on $\widetilde{X}$ and obtain
		$$
		h_\mu(a_1)=\mu(X)h_{\widehat{\mu}}(a_1)=\mu(X)\int_{E_A(X)}h_m(a_1)\ d\tau(m)
		$$
		and, by dividing both sides by $\mu(X)>1-\gamma>0$, 
		$$
		h_{\widehat{\mu}}(a_1)=\int_{E_A(X)}h_m(a_1)\ d\tau(m).
		$$
		By our assumption, we have $h_{\widehat{\mu}}(a_1)>\gamma$.
		Furthermore, from Theorem \ref{entropy_measurerigidity}, it follows that $h_m(a_1)=0$ for all $m\in E_A(X)\setminus\{m_X\}$ and we have $h_{m_X}(a_1)=4$ (see \cite[Proposition 9.6]{MT94} or \cite[Lemma 6.2]{EK03}). Hence, the right hand side is $4\tau(\{m_X\})$ and we obtain
		$$
		\tau(\{m_X\})>\frac{\gamma}{4}.
		$$
		This inequality, our assumption that $\mu(X)>1-\gamma$, and (\ref{ergodicdecomposition}) imply that
		\begin{equation}\label{proportion_haar}
			\mu(E)\geq \frac{(1-\gamma)\gamma}{4}m_X(E)
		\end{equation}
		for all Borel subset $E$ of $X$.
		
		By our assumption, $\delta^{T_k}_{A^+,\tau_{\alpha,\beta}\Gamma}\to\mu$ as $k\to\infty$. Then it follows from the inequality (\ref{proportion_haar}) that
		\begin{align}\label{semiequidistribution}
			\liminf_{k\to\infty}\frac{1}{T_k^2}m_{\R^2}\left(\left\{(s,t)\in[0,T_k]^2\left|\ a_{s,t}\tau_{\alpha,\beta}\Gamma\in O\right.\right\}\right)=&\ \liminf_{k\to\infty}\delta^{T_k}_{A^+,\tau_{\alpha,\beta}\Gamma}(O)
			\nonumber\\
			\geq&\ \mu (O)\nonumber\\
			\geq&\ \frac{(1-\gamma)\gamma}{4}m_X(O)
		\end{align}
		for any open subset $O$ of $X$, where $m_{\R^2}$ is the Lebesgue measure on $\R^2$.
		We notice that we can consider $X$ as the space of unimodular lattices in $\R^3$, that is, discrete additive subgroups of $\R^3$ with the covolume $1$, by identifying $g\Gamma\in X\ (g\in G)$ with $g\cdot \Z^3\subset \R^3$. We take small constants $0<\eta_0,\varepsilon_0<1$ which are determined only by $X$.  For $0<\varepsilon<\varepsilon_0$, we define the open subset $V_\varepsilon$ of $X$ by
		\begin{equation*}
			V_\varepsilon=\left\{x\in X\left|\ x\subset\R^3\ {\rm intersects\ with}\ (e^{-\eta_0},1)\times(-\sqrt{\varepsilon},\sqrt{\varepsilon})^2\right.\right\}.
		\end{equation*}
		
		Let $k\in\{1,2,\dots\}$ and $(s,t)\in[0,T_k]^2$ and suppose $a_{s,t}\tau_{\alpha,\beta}\Gamma\in V_\varepsilon$. Then there exists $(n,m_1,m_2)\in\Z^3\setminus\{0\}$ such that
		\begin{equation*}
			a_{s,t}\tau_{\alpha,\beta}
			\begin{pmatrix}
				n\\m_1\\m_2
			\end{pmatrix}
			=\begin{pmatrix}
				e^{-s-t}n\\e^s(n\alpha+m_1)\\e^t(n\beta+m_2)
			\end{pmatrix}
			\in(e^{-\eta_0},1)\times (-\sqrt{\varepsilon},\sqrt{\varepsilon})^2.
		\end{equation*}
		Hence, we have that $n\in\N$, 
		\begin{equation*}\label{littlewood_ineq}
			n\langle n\alpha\rangle\langle n\beta\rangle\leq n|n\alpha+m_1||n\beta+m_2|<\varepsilon
		\end{equation*}
		and
		\begin{equation}\label{estimate_n}
			\log n<s+t<\log n+\eta_0.
		\end{equation}
		By (\ref{estimate_n}), we have $n<e^{s+t}\leq e^{2T_k}$. Here we write
		$$
		\Lambda_\varepsilon(T_k)=\left\{n\in\N\left|\ n<e^{2T_k}, n\langle n\alpha\rangle\langle n\beta\rangle<\varepsilon\right.\right\}.
		$$
		Then, for $(s,t)\in[0,T_k]^2$ such that $a_{s,t}\tau_{\alpha,\beta}\Gamma\in V_\varepsilon$ and $n\in\N$ which is taken above for $(s,t)$, we have $n\in\Lambda_\varepsilon(T_k)$ and hence, by (\ref{estimate_n}), 
		\begin{equation*}
			s+t\in\bigcup_{n\in\Lambda_\varepsilon(T_k)}\left(\log n,\log n+\eta_0\right).
		\end{equation*}
		Then it follows that
		\begin{equation}\label{thenumber_n}
			m_{\R}\left(\left\{s+t\left|\ (s,t)\in[0,T_k]^2, a_{s,t}\tau_{\alpha,\beta}\Gamma\in V_\varepsilon\right.\right\}\right)\leq \eta_0\cdot|\Lambda_\varepsilon(T_k)|,
		\end{equation}
		where $m_\R$ is the Lebesgue measure on $\R$.
		About the left hand side, by changing variables and Fubini's theorem, we have
		\begin{align*}
			&m_{\R^2}\left(\left\{(s,t)\in[0,T_k]^2\left|\ a_{s,t}\tau_{\alpha,\beta}\Gamma\in V_\varepsilon\right.\right\}\right)\\
			=&\int_0^{T_k}\int_{t'}^{t'+T_k}\chi_{\left\{(s,t)\in[0,T_k]^2\left|\ a_{s,t}\tau_{\alpha,\beta}\Gamma\in V_\varepsilon\right.\right\}}(s'-t',t')\ ds'dt'\\
			\leq&\int_0^{T_k}\int_{\R}\chi_{\left\{s+t\ \left|\ (s,t)\in[0,T_k]^2, a_{s,t}\tau_{\alpha,\beta}\Gamma\in V_\varepsilon\right.\right\}}(s')\ ds'dt'\\
			=&\ T_k\cdot m_{\R}\left(\left\{s+t\left|\ (s,t)\in[0,T_k]^2, a_{s,t}\tau_{\alpha,\beta}\Gamma\in V_\varepsilon\right.\right\}\right),
		\end{align*}
		where $\chi_\cdot$ is the characteristic function for a set.
		From this and the inequality (\ref{thenumber_n}), we have
		\begin{equation*}
			|\Lambda_\varepsilon(T_k)|\geq\frac{1}{\eta_0T_k}m_{\R^2}\left(\left\{(s,t)\in[0,T_k]^2\left|\ a_{s,t}\tau_{\alpha,\beta}\Gamma\in V_\varepsilon\right.\right\}\right).
		\end{equation*}
		By dividing by $T_k$ and letting $k\to\infty$, we have from the inequality (\ref{semiequidistribution}) that
		\begin{align}\label{liminf_bound}
			\liminf_{k\to\infty}\frac{|\Lambda_\varepsilon(T_k)|}{T_k}\geq&\ \frac{1}{\eta_0}\liminf_{k\to\infty}\frac{1}{T_k^2}m_{\R^2}\left(\left\{(s,t)\in[0,T_k]^2\left|\ a_{s,t}\tau_{\alpha,\beta}\Gamma\in V_\varepsilon\right.\right\}\right)\nonumber\\
			=&\ \frac{(1-\gamma)\gamma}{4\eta_0}m_X(V_\varepsilon).
		\end{align}
		Finally, we estimate $m_X(V_\varepsilon)$.
		We take $0<\eta_0,\varepsilon_0<1$ in the definition $V_\varepsilon$ sufficiently small so that $W=\left\{g=(g_{i,j})_{1\leq i,j\leq 3}\in G\left|\ e^{-\eta_0}<g_{i,i}<e^{\eta_0},\ |g_{i,j}|< \sqrt{\varepsilon_0}\ (i\neq j)\right.\right\}$ is a small open neighborhood and $\pi_X: W\ni g\mapsto g\Gamma\in X$ is injective. 
		Then $m_X|_{\pi_X(W)}$ equals ${\pi_X}_*m_G|_W$ up to a constant, where $m_G$ is the Haar measure on $G$.
		If we write $V'_\varepsilon=\left\{g\in W\left|\ e^{-\eta_0}<g_{1,1}<1,\ |g_{1,2}|,|g_{1,3}|< \sqrt{\varepsilon}\right.\right\}$ for $0<\varepsilon<\varepsilon_0$, then we can see that
		$V'_\varepsilon\subset W$ and $\pi_X(V'_\varepsilon)\subset V_\varepsilon$.
		Since the Haar measure $m_G$ on $G$ is smooth, we have
		$m_X(\pi_X(V'_\varepsilon))\geq C(\sqrt{\varepsilon})^2=C\varepsilon$ for some constant $C>0$ independent of $\varepsilon$. Hence, we have $m_X(V_\varepsilon)\geq m_X(\pi_X(V'_\varepsilon))\geq C\varepsilon$. This estimate and the inequality (\ref{liminf_bound}) imply
		$$
		\liminf_{k\to\infty}\frac{|\Lambda_\varepsilon(T_k)|}{T_k}\geq \frac{C}{4\eta_0}(1-\gamma)\gamma\varepsilon,
		$$
		and we complete the proof.
	\end{proof}
	
	\subsection{The empirical measures of the case of escape of mass}\label{subseq_escape_of_mass}
	
	We prove Theorem \ref{escape_of_mass}. We can obtain Theorem \ref{escape_of_mass} from the following Proposition \ref{cusp_littlewood}. We notice that, as we saw in the proof of Theorem \ref{converge_high_entrpy}, $X$ can be regarded as the space of unimodular lattices in $\R^3$. We take $0<\varepsilon<1/2$ and write $B^{\R^3}_\varepsilon=\left\{\boldsymbol{x}\in\R^3\left|\|\boldsymbol{x}\|_\infty<\varepsilon\right.\right\}$,
	where, for $\boldsymbol{x}={}^t(x_1,x_2,x_3)\in\R^3$, $\|\boldsymbol{x}\|_\infty=\max\{|x_1|,|x_2|,|x_3|\}$. For $\varepsilon$, we define the subset $X_{\varepsilon}$ by
	$$
	X_{\varepsilon}=\left\{x\in X\left|\ x\cup\overline{B^{\R^3}_\varepsilon}\neq\{0\}\right.\right\}.
	$$
	Then we state Proposition \ref{cusp_littlewood} as follows.
	
	\begin{prop}\label{cusp_littlewood}
		Let $\alpha,\beta\in\R\setminus\Q$. For $0<\gamma<1$, $0<\varepsilon<1/2$ and $T>0$, we assume that
		\begin{equation}\label{escape_to_cusp}
			\frac{1}{T^2}m_{\R^2}\left(\left\{(s,t)\in[0,T]^2\left|\ a_{s,t}\tau_{\alpha,\beta}\Gamma\in X_{\varepsilon}\right.\right\}\right)\geq \gamma.
		\end{equation}
		Then we have
		$$
		\frac{1}{T}\left|\left\{n\in\N\left|\ n<e^{2T}, n\langle n\alpha\rangle\langle n\beta\rangle\leq\varepsilon^3\right.\right\}\right|\geq \frac{\gamma}{3\log 2}.
		$$
	\end{prop}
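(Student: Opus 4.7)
The plan parallels the proof of Theorem~\ref{converge_high_entrpy}: project the two-dimensional escape-of-mass hypothesis onto the diagonal $u = s+t$ via Fubini, then count witness integers. Let $E = \{(s,t) \in [0,T]^2 : a_{s,t}\tau_{\alpha,\beta}\Gamma \in X_\varepsilon\}$; by hypothesis $m_{\R^2}(E) \geq \gamma T^2$, and the change of variable $(s,t) \mapsto (s, s+t)$ as in the proof of Theorem~\ref{converge_high_entrpy} yields $m_{\R^2}(E) \leq T \cdot m_\R(\{s+t : (s,t) \in E\})$, so $m_\R(\{s+t : (s,t) \in E\}) \geq \gamma T$.

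For each $(s,t) \in E$, unfold the inclusion $a_{s,t}\tau_{\alpha,\beta}(n, m_1, m_2)^T \in \overline{B^{\R^3}_\varepsilon}$, taking $m_1, m_2$ to be nearest integers to $-n\alpha, -n\beta$. The hypothesis $\alpha, \beta \notin \Q$ together with $\varepsilon < 1/2$ rules out $n = 0$, producing a positive integer $n$ with $n \leq e^{s+t}\varepsilon < e^{2T}$, $\langle n\alpha\rangle \leq e^{-s}\varepsilon$, and $\langle n\beta\rangle \leq e^{-t}\varepsilon$; hence $n\langle n\alpha\rangle\langle n\beta\rangle \leq \varepsilon^3$, so setting $\Lambda = \{k \in \N : k < e^{2T},\ k\langle k\alpha\rangle\langle k\beta\rangle \leq \varepsilon^3\}$, we have $n \in \Lambda$.

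The main step is to show that every $u \in \{s+t : (s,t) \in E\}$ admits some $n \in \Lambda$ in the narrow multiplicative window $(e^u\varepsilon/8,\, e^u\varepsilon]$, i.e., $\log(n/\varepsilon) \in (u - 3\log 2,\, u]$. Once this is established, assigning each such $u$ to any such $n$ produces a map whose fibres are contained in $u$-intervals of length at most $3\log 2$, whence $|\Lambda| \geq m_\R(\{s+t : (s,t) \in E\})/(3\log 2) \geq \gamma T/(3\log 2)$, as required.

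To realize the main step, start with a witness $n_0$ at some $(s, u-s) \in E$; if $n_0 \in (e^u\varepsilon/8, e^u\varepsilon]$ we are done, and otherwise $n_0 \leq e^u\varepsilon/8$. Writing $v = a_{s,u-s}\tau_{\alpha,\beta}(n_0, m_1, m_2)^T$, one has $n_0\langle n_0\alpha\rangle\langle n_0\beta\rangle = |v_1 v_2 v_3|$ with $|v_1| = e^{-u}n_0 \leq \varepsilon/8$ and $|v_2|, |v_3| \leq \varepsilon$, so using $\langle kn\alpha\rangle \leq k\langle n\alpha\rangle$,
\[
k n_0 \langle k n_0\alpha\rangle\langle k n_0\beta\rangle \leq k^3 \cdot n_0\langle n_0\alpha\rangle\langle n_0\beta\rangle \leq k^3 \cdot e^{-u}n_0\, \varepsilon^2,
\]
which is $\leq \varepsilon^3$ precisely when $k^3 n_0 \leq e^u\varepsilon$. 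The cubic scaling, reflecting the three coordinates of $\R^3$, is the source of the constant $\log 8 = 3\log 2$. The main obstacle is choosing $k$ so that simultaneously $kn_0 \in (e^u\varepsilon/8, e^u\varepsilon]$ and $kn_0 \in \Lambda$: the admissible interval for $k$ is $\bigl(e^u\varepsilon/(8n_0),\, (e^u\varepsilon/n_0)^{1/3}\bigr]$, which contains an integer when $n_0 > e^u\varepsilon/8^{3/2}$, and otherwise requires iterating the multiplication, first enlarging $n_0$ to a new witness at an intermediate scale and then reapplying the argument.
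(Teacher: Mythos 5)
The Fubini reduction to $m_\R\left(\{s+t : (s,t)\in E\}\right)\geq\gamma T$ is correct and matches the paper's Lemma~\ref{Xi_max}. The problem lies in your ``main step,'' which is unproven and, in fact, false as stated. You claim every $u\in\{s+t : (s,t)\in E\}$ admits a witness $n\in\Lambda$ with $\log(n/\varepsilon)\in(u-3\log 2,\,u]$. Your argument via multiplication works only when the witness $n_0$ at level $u$ already has $n_0$ close to $e^u\varepsilon$; as you observe, the admissible interval for $k$ is $\bigl(e^u\varepsilon/(8n_0),\ (e^u\varepsilon/n_0)^{1/3}\bigr]$, which is empty once $n_0 < e^u\varepsilon/8^{3/2}$, and already for $n_0 = e^u\varepsilon/16$ the interval $(2,\,2.52]$ contains no integer. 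The ``iterate to an intermediate scale'' suggestion cannot rescue this: after multiplying $n_0$ by $k$, the only control you retain is $kn_0\langle kn_0\alpha\rangle\langle kn_0\beta\rangle\leq k^3\,n_0\langle n_0\alpha\rangle\langle n_0\beta\rangle$, and the cubic growth prevents you from ever reaching a witness near $e^u\varepsilon$ when $n_0$ is far below it. Moreover, nothing guarantees there is \emph{any} $n\in\Lambda$ in the window $(e^u\varepsilon/8, e^u\varepsilon]$: if the escape to the cusp near $(s,u-s)$ is caused by a single very small lattice vector $\boldsymbol{n}_0={}^t(n_0,m_1,m_2)$ with $\langle n_0\alpha\rangle,\langle n_0\beta\rangle$ extremely small, the triangle $d_{\varepsilon,\boldsymbol{n}_0}$ can cover $u$ far from its tip at $\log(n_0/\varepsilon)$, and there need be no other lattice point producing a witness near $e^u\varepsilon$.

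The paper's proof counts in the opposite direction. It first shows that the projection $\pi(E)$ is exactly the union of intervals $[\log(n/\varepsilon),\,\log(n/\varepsilon)+\lambda_n]$, one per relevant $n$ (this uses $\varepsilon<1/2$ and $\alpha,\beta\notin\Q$ to get at most one triangle per $n$). It then restricts to the maximal intervals $\Xi$, and for each $n\in\Xi$ manufactures the witnesses $\Lambda(n)=\{2^pn : 0\leq p\log 2\leq\lambda_n/3\}$. The constraint $p\log 2\leq\lambda_n/3$ is precisely what makes $(2^p)^3 n\langle n\alpha\rangle\langle n\beta\rangle\leq e^{\lambda_n}\varepsilon^3 e^{-\lambda_n}=\varepsilon^3$; this is the same cubic phenomenon you identified, but applied only within the first third of each interval rather than trying to hit an arbitrary $u$. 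The witnesses $2^pn$ from different $n\in\Xi$ can coincide, and the paper handles this through the equivalence relation $\sim$ and Lemma~\ref{card_Lambda_n}, showing that within each equivalence class the distinct powers of $2$ times a fixed base fill a contiguous block of length $\geq\frac{1}{3}$ of the combined projected measure. Your proposal misses both the structural decomposition of $\pi(E)$ into triangle-projections with identifiable left endpoints and the overlap bookkeeping, and it rests on a pointwise claim that is not provable by the multiplication trick alone.
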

	
	Before we prove Proposition \ref{cusp_littlewood}, we see that Theorem \ref{escape_of_mass} follows from Proposition \ref{cusp_littlewood}.
	
	\begin{proof}[Proof of Theorem \ref{escape_of_mass}]
		Assume Proposition \ref{cusp_littlewood} holds. Let $0<\gamma<1$, $\alpha,\beta\in\R$ and $\{T_k\}_{k=1}^\infty$ be a sequence in $\R_{>0}$ such that $T_k\to\infty$ as $k\to\infty$ and assume that $\{\delta^{T_k}_{A^+,\tau_{\alpha,\beta}\Gamma}\}_{k=1}^\infty$ exhibits $\gamma$-escape of mass.
		If $\alpha\in\Q$ or $\beta\in\Q$, then the conclusion is trivial. Hence, we assume that $\alpha,\beta\in\R\setminus\Q$.
		We take $0<\varepsilon<1/2$ and define the subset $X'_{\varepsilon}$ of $X$ by
		$$
		X'_{\varepsilon}=\left\{x\in X\left|\ x\cup B^{\R^3}_\varepsilon\neq\{0\}\right.\right\}.
		$$
		Then $X'_{\varepsilon}\subset X_\varepsilon$, $X'_{\varepsilon}$ is open and, by Mahler's compactness criterion (see \cite[Theorem 11.33]{EW11}), $X\setminus X'_{\varepsilon}$ is compact. Since $\{\delta^{T_k}_{A^+,\tau_{\alpha,\beta}\Gamma}\}_{k=1}^\infty$ exhibits $\gamma$-escape of mass, it follows that
		\begin{align*}
			&\liminf_{k\to\infty}\frac{1}{T_k^2}m_{\R^2}\left(\left\{(s,t)\in[0,T_k]^2\left|\ a_{s,t}\tau_{\alpha,\beta}\Gamma\in X_\varepsilon\right.\right\}\right)\nonumber\\
			\geq&\ \liminf_{k\to\infty}\frac{1}{T_k^2}m_{\R^2}\left(\left\{(s,t)\in[0,T_k]^2\left|\ a_{s,t}\tau_{\alpha,\beta}\Gamma\in X'_\varepsilon\right.\right\}\right)\nonumber\\
			=&\ \liminf_{k\to\infty}\delta^{T_k}_{A^+,\tau_{\alpha,\beta}\Gamma}(X'_\varepsilon)
			\nonumber\\
			\geq&\ \gamma.
		\end{align*}
		Hence, if we take arbitrary $0<\eta<\gamma$, then we have for sufficiently large $k$ that
		$$
		\frac{1}{T_k^2}m_{\R^2}\left(\left\{(s,t)\in[0,T_k]^2\left|\ a_{s,t}\tau_{\alpha,\beta}\Gamma\in X_\varepsilon\right.\right\}\right)\geq\gamma-\eta
		$$
		and we can apply Proposition \ref{cusp_littlewood} to this inequality. Therefore, we obtain
		\begin{align*}
			\frac{1}{T_k}\left|\left\{n\in\N\left|\ n<e^{2T_k}, n\langle n\alpha\rangle\langle n\beta\rangle<\varepsilon\right.\right\}\right|\geq&
			\ \frac{1}{T_k}\left|\left\{n\in\N\left|\ n<e^{2T_k}, n\langle n\alpha\rangle\langle n\beta\rangle\leq\varepsilon^3\right.\right\}\right|\\
			\geq& \ \frac{\gamma-\eta}{3\log 2}.
		\end{align*}
		By letting $k\to\infty$ and $\eta\to 0$, we obtain
		$$
		\liminf_{k\to\infty}\frac{1}{T_k}\left|\left\{n\in \N\left|\ n<e^{2T_k}, n\langle n\alpha\rangle\langle n\beta\rangle<\varepsilon\right.\right\}\right|\geq\frac{\gamma}{3\log2}
		$$
		and complete the proof.
	\end{proof}
	
	We prove Proposition \ref{cusp_littlewood} in the rest of this subsection.
	Let $\alpha,\beta\in\R\setminus\Q$, $0<\gamma<1$, $0<\varepsilon<1/2$ and $T>0$ satisfy (\ref{escape_to_cusp}).
	For $\boldsymbol{n}\in\Z^3\setminus\{0\}$, we define
	$$
	d_{\varepsilon,\boldsymbol{n}}=\left\{(s,t)\in\R^2\left|\ \|a_{s,t}\tau_{\alpha,\beta}\boldsymbol{n}\|_\infty\leq\varepsilon\right.\right\}.
	$$
	If $d_{\varepsilon,\boldsymbol{n}}\neq \emptyset$
	for  $\boldsymbol{n}={}^t(n,m_1,m_2)\in\Z^3\setminus\{0\}$, then $n\neq0$,
	$$
	d_{\varepsilon,\boldsymbol{n}}=\left\{(s,t)\in\R^2\left|\ s\leq\log\frac{\varepsilon}{|n\alpha+m_1|},\ t\leq\log\frac{\varepsilon}{|n\beta+m_2|},\ s+t\geq\log\frac{|n|}{\varepsilon}\right.\right\}
	$$
	and this is the isosceles right triangle with the length of the leg $\log (\varepsilon^3/|n||n\alpha+m_1||n\beta+m_2|)$. We notice that, since $\alpha,\beta\in\R\setminus\Q$, we have $|n\alpha+m_1|,|n\beta+m_2|>0$.
	\begin{figure}[b]
		\centering
		\includegraphics[width=10cm]{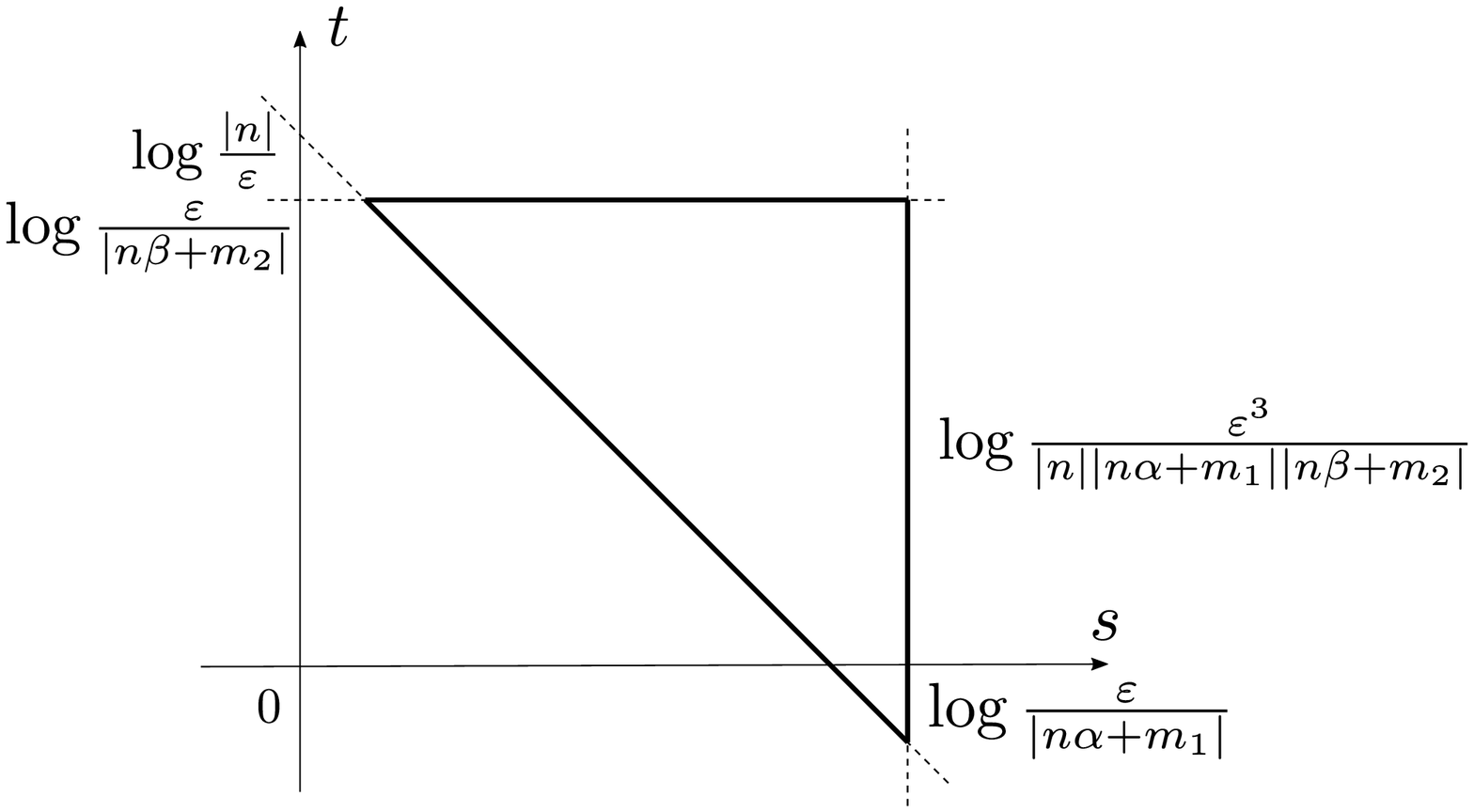}
		\caption{$d_{\varepsilon,\boldsymbol{n}}$}
	\end{figure}
	By the definition of $X_\varepsilon$, it follows that
	\begin{equation}\label{cover_by_d}
		\left\{(s,t)\in[0,T]^2\left|\ a_{s,t}\tau_{\alpha,\beta}\Gamma\in X_{\varepsilon}\right.\right\}=\bigcup_{\boldsymbol{n}\in\Z^3\setminus\{0\}}\left(d_{\varepsilon,\boldsymbol{n}}\cap[0,T]^2\right).
	\end{equation}
	Since $d_{\varepsilon,\boldsymbol{n}}=d_{\varepsilon,-\boldsymbol{n}}$ for each $\boldsymbol{n}\in\Z^3\setminus\{0\}$, we take the union of the right hand side only over $\boldsymbol{n}={}^t(n,m_1,m_1)\in\Z^3\setminus\{0\}$ such that $n>0$. Here we see the following lemma.
	
	\begin{lem}
		For each $n\in\N$, the element $\boldsymbol{n}={}^t(n,m_1,m_2)$ of $\Z^3\setminus\{0\}$ such that
		$d_{\varepsilon, \boldsymbol{n}}\cap[0,T]^2\neq \emptyset$ is at most only one.
	\end{lem}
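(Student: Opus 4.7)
The plan is to extract from the condition $d_{\varepsilon,\boldsymbol{n}}\cap[0,T]^2\neq\emptyset$ enough control on the last two coordinates of $\boldsymbol{n}$ that they become determined by $n$. First I would use the explicit description of $d_{\varepsilon,\boldsymbol{n}}$ already stated just above the lemma: if it is nonempty it has the form
$$d_{\varepsilon,\boldsymbol{n}}=\left\{(s,t)\in\R^2\left|\ s\leq\log\frac{\varepsilon}{|n\alpha+m_1|},\ t\leq\log\frac{\varepsilon}{|n\beta+m_2|},\ s+t\geq\log\frac{|n|}{\varepsilon}\right.\right\}.$$
For $d_{\varepsilon,\boldsymbol{n}}$ to intersect $[0,T]^2$ it must contain a point with $s\geq 0$ and $t\geq 0$, so the two upper bounds must be nonnegative, which gives the near-integer inequalities $|n\alpha+m_1|\leq\varepsilon$ and $|n\beta+m_2|\leq\varepsilon$.

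Second, I would apply this conclusion to two candidates $\boldsymbol{n}={}^t(n,m_1,m_2)$ and $\boldsymbol{n}'={}^t(n,m_1',m_2')$ having the same first coordinate, both yielding nonempty intersection with $[0,T]^2$. Subtracting the inequalities in each coordinate gives
$$|m_1-m_1'|=|(n\alpha+m_1)-(n\alpha+m_1')|\leq 2\varepsilon<1,$$
and analogously $|m_2-m_2'|<1$. Since $m_1,m_1',m_2,m_2'\in\Z$, this forces $m_1=m_1'$ and $m_2=m_2'$, hence $\boldsymbol{n}=\boldsymbol{n}'$. The hypothesis $\varepsilon<1/2$ from Proposition \ref{cusp_littlewood} is doing all the work, and I do not anticipate any obstacle: the argument rests only on integrality of the last two coordinates together with the smallness of $\varepsilon$. (Note also that $n\geq 1$ is consistent with the reduction to $n>0$ made just before the lemma, so the $m_i$ are genuine integer offsets rather than being ambiguous up to sign.)
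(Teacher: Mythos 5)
Your proposal is correct and follows essentially the same approach as the paper: both extract $|n\alpha+m_1|\leq\varepsilon$ and $|n\beta+m_2|\leq\varepsilon$ from the nonempty intersection with $[0,T]^2$, then use integrality of $m_1,m_2$ together with $\varepsilon<1/2$ to force uniqueness (the paper phrases it contrapositively, showing $|n\alpha+m_1'|\geq 1-\varepsilon>\varepsilon$ kills the intersection for any competitor, whereas you directly derive $|m_1-m_1'|\leq 2\varepsilon<1$; these are the same triangle-inequality computation).
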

	
	\begin{proof}
		Let $n\in\N$ such that $d_{\varepsilon,\boldsymbol{n}}\cap[0,T]^2\neq\emptyset$ for some $\boldsymbol{n}={}^t(n,m_1,m_2)$. Then, for $(s,t)\in d_{\varepsilon,\boldsymbol{n}}\cap[0,T]^2$, we have $|n\alpha+m_1|\leq e^{-s}\varepsilon\leq\varepsilon$ and $|n\beta+m_2|\leq e^{-t}\varepsilon\leq\varepsilon$. Hence, for any $\boldsymbol{n}'={}^t(n,m'_1,m'_2)\in\Z^3\setminus\{0\}$ such that $\boldsymbol{n}'\neq\boldsymbol{n}$, we have $m'_i\neq m_i$ for $i=1$ or $2$ and, if $i=1$, $|n\alpha+m'_1|\geq |m'_1-m_1|-|n\alpha+m_1|\geq 1-\varepsilon$. Then we have $\log(\varepsilon/|n\alpha+m'_1|)\leq\log(\varepsilon/(1-\varepsilon))<0$ and $d_{\varepsilon,\boldsymbol{n}'}\cap[0,T]^2=\emptyset$. We have the same conclusion when $i=2$.
	\end{proof}
	
	For each $n\in\N$, we write $d_{\varepsilon,n}=d_{\varepsilon,\boldsymbol{n}}$ if there exists $\boldsymbol{n}={}^t(n,m_1,m_2)\in\Z^3$ such that $d_{\varepsilon, \boldsymbol{n}}\cap[0,T]^2\neq \emptyset$,
	and $d_{\varepsilon,n}=\emptyset$ if not. Then we can write (\ref{cover_by_d}) as
	\begin{equation}\label{cover_by_dn}
		\left\{(s,t)\in[0,T]^2\left|\ a_{s,t}\tau_{\alpha,\beta}\Gamma\in X_{\varepsilon}\right.\right\}=\bigcup_{n\in\N}\left(d_{\varepsilon,n}\cap[0,T]^2\right)
	\end{equation}
	and we notice that, for each $n\in\N$, the isosceles right triangle $d_{\varepsilon,\boldsymbol{n}}$ with its hypotenuse on the line $\{(s,t)\in\R^2\left|s+t=\log(n/\varepsilon)\right.\}$ is at most only one.
	
	We define $\pi:\R^2\to\R$ by $\pi(s,t)=s+t$. 
	We notice that, for each $n\in\N$ such that $d_{\varepsilon,n}\cap[0,T]^2\neq\emptyset$, $\pi\left(d_{\varepsilon,n}\cap[0,T]^2\right)$ is the interval $[\log(n/\varepsilon),s_n+t_n]$, where $(s_n,t_n)\in d_{\varepsilon,n}\cap[0,T]^2$ (see Figure \ref{cover_dn}).
	\begin{figure}[b]
		\centering
		\includegraphics[width=12cm]{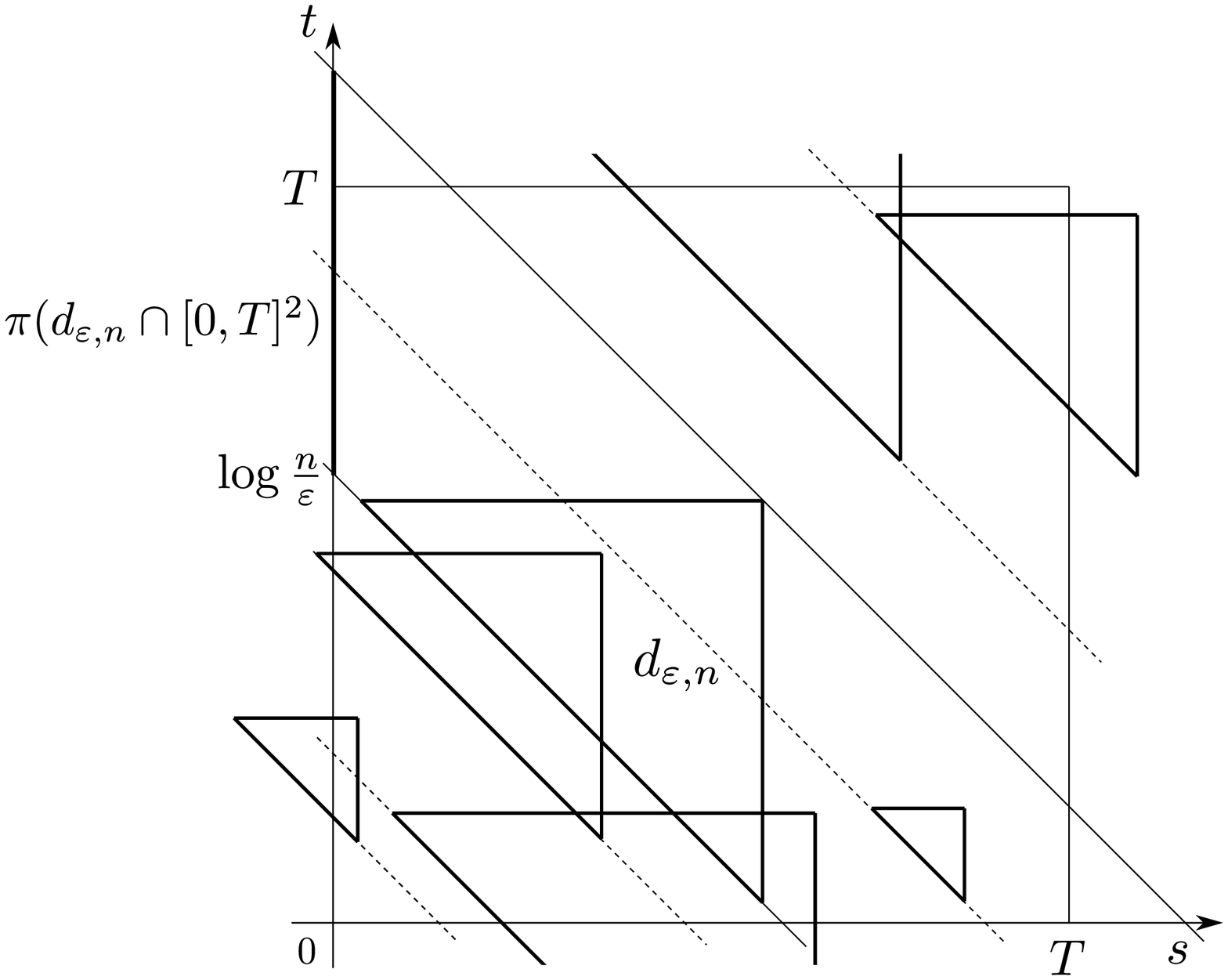}
		\caption{Intersections of $d_{\varepsilon,n} (n\in\N)$ with $[0,T]^2$ and its image under $\pi$}\label{cover_dn}
	\end{figure}
	Let $\Xi$ be the set of $n\in\N$ such that $\pi\left(d_{\varepsilon,n}\cap[0,T]^2\right)$ is an interval with positive length and maximal with respect to inclusion in $\left\{\pi\left(d_{\varepsilon,m}\cap[0,T]^2\right)\left|\ m\in\N\right.\right\}$. Then $\Xi$ is a finite set and the following holds.
	
	\begin{lem}\label{Xi_max}
		$$
		m_{\R}\left(\bigcup_{n\in\Xi}\pi\left(d_{\varepsilon,n}\cap[0,T]^2\right)\right)\geq \gamma\cdot T.
		$$
	\end{lem}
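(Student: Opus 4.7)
The plan is to reduce Lemma \ref{Xi_max} to a one-line Fubini estimate that is essentially identical to the one used in the proof of Theorem \ref{converge_high_entrpy}. The first step is to observe that, at the level of Lebesgue measure, passing to the maximal family $\Xi$ does not shrink the union: each interval $\pi(d_{\varepsilon,n}\cap[0,T]^2)$ of positive length has left endpoint $\log(n/\varepsilon)$, so any interval $\pi(d_{\varepsilon,n'}\cap[0,T]^2)$ containing it must have $n'\leq n$, and there are only finitely many such $n'\in\N$. Hence every positive-length interval is contained in a maximal one, and combined with the equality (\ref{cover_by_dn}) this gives
$$\bigcup_{n\in\Xi}\pi\bigl(d_{\varepsilon,n}\cap[0,T]^2\bigr)=\bigcup_{n\in\N}\pi\bigl(d_{\varepsilon,n}\cap[0,T]^2\bigr)=\pi(E),$$
where $E=\{(s,t)\in[0,T]^2 : a_{s,t}\tau_{\alpha,\beta}\Gamma\in X_\varepsilon\}$.

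The hypothesis (\ref{escape_to_cusp}) supplies $m_{\R^2}(E)\geq\gamma T^2$, so it suffices to prove the general inequality
$$m_{\R^2}(E)\leq T\cdot m_\R\bigl(\pi(E)\bigr)$$
for any Borel $E\subset[0,T]^2$. This is the same substitution $u=s+t$ already performed in the proof of Theorem \ref{converge_high_entrpy}: for each fixed $t\in[0,T]$ one has $m_{\R^2}(E)=\int_0^T\int_t^{t+T}\mathbf{1}_E(u-t,t)\,du\,dt$, and $\mathbf{1}_E(u-t,t)\leq\mathbf{1}_{\pi(E)}(u)$ since $(u-t)+t=u$, yielding $m_{\R^2}(E)\leq T\cdot m_\R(\pi(E))$ after integrating in $t$.

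Putting the two steps together gives $\gamma T^2\leq m_{\R^2}(E)\leq T\cdot m_\R(\pi(E))$, hence
$$m_\R\!\left(\bigcup_{n\in\Xi}\pi\bigl(d_{\varepsilon,n}\cap[0,T]^2\bigr)\right)=m_\R(\pi(E))\geq\gamma T,$$
which is Lemma \ref{Xi_max}. I do not anticipate a real obstacle here: the argument is a straightforward repackaging of the Fubini trick used for Theorem \ref{converge_high_entrpy}. The only mildly delicate point is the first step, ensuring that restricting to the maximal intervals indexed by $\Xi$ does not lose any measure, and this is resolved by the finiteness observation that each interval is contained in only finitely many others.
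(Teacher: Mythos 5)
Your proposal is correct and takes essentially the same route as the paper: a Fubini--type bound $m_{\R^2}(E)\leq T\cdot m_\R(\pi(E))$ combined with the observation that restricting to the maximal family $\Xi$ loses nothing at the level of Lebesgue measure. The only quibble is that you assert a set-theoretic equality $\bigcup_{n\in\Xi}\pi(d_{\varepsilon,n}\cap[0,T]^2)=\bigcup_{n\in\N}\pi(d_{\varepsilon,n}\cap[0,T]^2)$, which can fail if some $\pi(d_{\varepsilon,n}\cap[0,T]^2)$ is a single point not covered by any positive-length interval; the paper states (and only needs) equality of Lebesgue measures, which is what your finiteness argument actually delivers once you discard the measure-zero degenerate images.
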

	
	\begin{proof}
		Similarly as the proof of Theorem \ref{converge_high_entrpy}, it follows from the inequality (\ref{escape_to_cusp}) and (\ref{cover_by_dn}) that
		\begin{align*}
			\gamma\cdot T^2\leq&\ m_{\R^2}\left(\left\{(s,t)\in[0,T]^2\left|\ a_{s,t}\tau_{\alpha,\beta}\Gamma\in X_{\varepsilon}\right.\right\}\right)\\
			\leq&\ T\cdot m_\R\left(\pi\left(\left\{(s,t)\in[0,T]^2\left|\ a_{s,t}\tau_{\alpha,\beta}\Gamma\in X_{\varepsilon}\right.\right\}\right)\right)\\
			=&\ T\cdot m_{\R}\left(\bigcup_{n\in\N}\pi\left(d_{\varepsilon,n}\cap[0,T]^2\right)\right)
		\end{align*}
		and hence
		\begin{equation*}
			m_{\R}\left(\bigcup_{n\in\N}\pi\left(d_{\varepsilon,n}\cap[0,T]^2\right)\right)\geq \gamma\cdot T.
		\end{equation*}
		Moreover, by the definition of $\Xi$, it is easily seen that
		\begin{equation*}
			m_\R\left(\bigcup_{n\in \Xi}\pi\left(d_{\varepsilon,n}\cap[0,T]^2\right)\right)=m_\R\left(\bigcup_{n\in\N}\pi\left(d_{\varepsilon,n}\cap[0,T]^2\right)\right)
		\end{equation*}
		and we obtain the lemma.
	\end{proof}
	
	For each $n\in \Xi$, we write $\lambda_n>0$ for the length of $\pi\left(d_{\varepsilon,n}\cap[0,T]^2\right)=[\log(n/\varepsilon),s_n+t_n]$ and 
	$$
	\Lambda(n)=\left\{2^pn\left|\ p\in\Z,0\leq p\log2\leq \lambda_n/3\right.\right\}.
	$$
	Let $2^pn\in\Lambda(n)$. Since $(s_n,t_n)\in d_{\varepsilon,n}\cap[0,T]^2$, we have $s_n+t_n\leq 2T$, $\langle n\alpha\rangle\leq \varepsilon e^{-s_n}$ and $\langle n\beta\rangle\leq \varepsilon e^{-t_n}$. Hence, we obtain that
	$$
	2^pn\leq e^{\lambda_n/3}n<e^{s_n+t_n-\log n+\log\varepsilon}n<e^{2T}
	$$
	and
	\begin{align*}
		2^pn\langle 2^pn\alpha\rangle\langle 2^pn\beta\rangle\leq (2^p)^3n\langle n\alpha\rangle\langle n\beta\rangle\leq e^{\lambda_n}\cdot n\cdot\varepsilon e^{-s_n}\cdot\varepsilon e^{-t_n}=\varepsilon^3.
	\end{align*}
	Therefore, we have
	\begin{equation}\label{inclusion_lambda}
		\bigcup_{n\in \Xi}\Lambda(n)\subset
		\left\{n\in \N\left|\ n<e^{2T}, n\langle n\alpha\rangle\langle n\beta\rangle\leq\varepsilon^3\right.\right\}.
	\end{equation}
	
	We estimate the cardinality of $\bigcup_{n\in \Xi}\Lambda(n)$. For $n,n'\in \Xi$, we write $n\sim n'$ if $\Lambda(n)\cap\Lambda(n_1)\neq\emptyset,\ \Lambda(n_1)\cap\Lambda(n_2)\neq\emptyset,\dots,\Lambda(n_i)\cap\Lambda(n')\neq\emptyset$ for some finite $n_1,n_2,\dots,n_i\in \Xi$. This $\sim$ is an equivalence relation on $\Xi$. We partition $\Xi$ into the equivalent classes under $\sim$ and write $\Xi=\bigsqcup_{l=1}^L\Xi_l$. Then we can write $\bigcup_{n\in\Xi}\Lambda(n)=\bigsqcup_{l=1}^L\bigcup_{n\in\Xi_l}\Lambda(n)$ as the disjoint union. Therefore, it is sufficient to estimate the cardinality of $\bigcup_{n\in\Xi_l}\Lambda(n)$ for each $l$. Then we see the following lemma.
	
	\begin{lem}\label{card_Lambda_n}
		For each $l$, we have
		\begin{equation*}\label{estimate_Xil_lambda}
			\left|\bigcup_{n\in\Xi_l}\Lambda(n)\right|\geq\frac{1}{3\log2}m_\R\left(\bigcup_{n\in\Xi_l}\pi(d_{\varepsilon,n}\cap[0,T]^2)\right).
		\end{equation*}
	\end{lem}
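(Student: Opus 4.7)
My plan is to reduce the statement to a self-contained geometric inequality about shrinking intervals on $\R$, after exploiting the multiplicative structure that the equivalence $\sim$ forces on $\Xi_l$. First I would observe that any two elements $n,n'\in\Xi_l$ have $n/n'$ equal to a power of $2$: if $\Lambda(n)\cap\Lambda(n')\ne\emptyset$ then $2^p n=2^{p'}n'$ for some $p,p'\in\Z_{\geq 0}$, so $n/n'\in 2^\Z$, and chaining along the definition of $\sim$ propagates this to every pair in $\Xi_l$. Fixing the minimum element $n_0\in\Xi_l$, every element of $\bigcup_{n\in\Xi_l}\Lambda(n)$ is therefore of the form $2^q n_0$ with $q\in\Z_{\geq 0}$, and distinct integers in this union correspond to distinct exponents $q$.

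Next I would pass to log-scale. To each $k\in\bigcup_{n\in\Xi_l}\Lambda(n)$ I assign the half-open interval $[\log k,\log k+\log 2)$. By the previous step these intervals are pairwise disjoint (their left endpoints lie on $\log n_0+(\log 2)\Z$), so their total length is exactly $\bigl|\bigcup_{n\in\Xi_l}\Lambda(n)\bigr|\cdot\log 2$. On the other hand, for each fixed $n\in\Xi_l$ the elements of $\Lambda(n)$ are the consecutive powers $n,2n,\dots,2^{\lfloor\lambda_n/(3\log 2)\rfloor}n$, hence
$$
\bigcup_{k\in\Lambda(n)}[\log k,\log k+\log 2)=\bigl[\log n,\log n+(\lfloor\lambda_n/(3\log 2)\rfloor+1)\log 2\bigr)\supseteq[\log n,\log n+\lambda_n/3].
$$
Combining these two observations yields
$$
\Bigl|\bigcup_{n\in\Xi_l}\Lambda(n)\Bigr|\cdot\log 2\;\geq\;m_\R\Bigl(\bigcup_{n\in\Xi_l}[\log n,\log n+\lambda_n/3]\Bigr).
$$

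The main remaining step is the geometric inequality: for any finite family of closed intervals $\{[a_i,a_i+\lambda_i]\}$ on $\R$,
$$
m_\R\Bigl(\bigcup_i [a_i,a_i+\lambda_i/3]\Bigr)\;\geq\;\tfrac{1}{3}\,m_\R\Bigl(\bigcup_i [a_i,a_i+\lambda_i]\Bigr).
$$
I would prove this by exhibiting a Borel injection $\phi:U\to V$, where $U$ and $V$ are the unions on the right and the left, whose restriction to each fibre is affine of slope $1/3$. For $x\in U$ let $i(x)$ be the index minimising $a_i$ among those $i$ with $x\in[a_i,a_i+\lambda_i]$ (breaking ties by index), and set $\phi(x)=a_{i(x)}+(x-a_{i(x)})/3\in[a_{i(x)},a_{i(x)}+\lambda_{i(x)}/3]\subseteq V$. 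Suppose $\phi(x)=\phi(y)$ with $x<y$; a short algebraic rearrangement gives $2(a_{i(x)}-a_{i(y)})=y-x>0$, so $a_{i(y)}<a_{i(x)}\leq x<y\leq a_{i(y)}+\lambda_{i(y)}$. But then $i(y)$ is a valid index for $x$ with a strictly smaller $a$-value than $i(x)$, contradicting the choice of $i(x)$. This proves injectivity; combined with the slope $1/3$ on each level set $\{i(x)=i\}$, it gives $m_\R(\phi(U))=m_\R(U)/3$, and the containment $\phi(U)\subseteq V$ finishes the proof of the inequality.

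Finally I would assemble the pieces: since $\pi(d_{\varepsilon,n}\cap[0,T]^2)=[\log(n/\varepsilon),\log(n/\varepsilon)+\lambda_n]$ is a translate of $[\log n,\log n+\lambda_n]$ by the constant $-\log\varepsilon$, the two unions have equal Lebesgue measure; applying the geometric inequality with $a_i=\log n$ and $\lambda_i=\lambda_n$ over $n\in\Xi_l$ and chaining with the displayed bound above gives the claim with constant $1/(3\log 2)$. The only non-routine ingredient is the injectivity of $\phi$ in the geometric inequality; everything else is bookkeeping built on the $2$-adic structure of $\Xi_l$.
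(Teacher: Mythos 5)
Your proof is correct, and it follows a genuinely different route from the paper's. The paper's argument leans heavily on the structural inequality $\log(n_{i+1}/\varepsilon)\leq\log(2^{p_i}n_i/\varepsilon)\leq\log(2^{p_{i+1}}n_{i+1}/\varepsilon)$, whose proof uses both the maximality of $\pi(d_{\varepsilon,n}\cap[0,T]^2)$ in $\Xi$ and the precise chaining of the $\sim$-relation within $\Xi_l$; this lets the paper show that the left-third intervals $I_{n_i}=[\log(n_i/\varepsilon),\log(n_i/\varepsilon)+\lambda_{n_i}/3]$ overlap and merge into a single closed interval $I$, after which the bound $|\bigcup_n\Lambda(n)|\geq|I|/\log2\geq\tfrac{1}{3}m_\R(\bigcup_n\pi(d_{\varepsilon,n}\cap[0,T]^2))/\log 2$ is immediate. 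You instead isolate a self-contained geometric lemma — that for any finite family of intervals $[a_i,a_i+\lambda_i]$, the union of the left thirds has Lebesgue measure at least $\tfrac{1}{3}$ of the measure of the union of the full intervals — which you prove cleanly by an injection $\phi$ that compresses each fibre of the ``leftmost containing interval'' map by $1/3$, and whose injectivity is a two-line algebraic check; the rest of your argument (covering $[\log n,\log n+\lambda_n/3]$ by the disjoint half-open intervals $[\log k,\log k+\log 2)$ on the grid $\log n_0+(\log2)\Z$) is straightforward bookkeeping. Your route buys modularity and generality: it only uses the fact that $\Xi_l$ sits in a single $2$-adic progression, and it completely sidesteps the maximality hypothesis and the delicate analysis pictured in the paper's Figure 3. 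The paper's route, in exchange, yields the sharper structural conclusion that $\bigcup_n\Lambda(n)$ is a full run of consecutive powers $\{2^pn_1 : 0\leq p\log 2\leq|I|\}$ and that $I$ and $\bigcup_n\pi(d_{\varepsilon,n}\cap[0,T]^2)$ are intervals sharing a left endpoint, information that goes beyond the stated inequality even though only the inequality is needed.
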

	
	If Lemma \ref{card_Lambda_n} is established, then, by Lemma \ref{card_Lambda_n}, (\ref{inclusion_lambda}) and Lemma \ref{Xi_max}, we have
	\begin{align*}
		\left|\left\{n\in\N\left|\ n<e^{2T}, n\langle n\alpha\rangle\langle n\beta\rangle\leq\varepsilon^3\right.\right\}\right|
		\geq&\ \left|\bigcup_{n\in\Xi}\Lambda(n)\right|\\
		=&\ \sum_{l=1}^L\left|\bigcup_{n\in\Xi_l}\Lambda(n)\right|\\
		\geq&\ \frac{1}{3\log2}\sum_{l=1}^Lm_\R\left(\bigcup_{n\in\Xi_l}\pi(d_{\varepsilon,n}\cap[0,T]^2)\right)\\
		\geq&\ \frac{1}{3\log2}m_\R\left(\bigcup_{n\in\Xi}\pi(d_{\varepsilon,n}\cap[0,T]^2)\right)\\
		\geq&\ \frac{\gamma}{3\log2}T
	\end{align*}
	and complete the proof of Proposition \ref{cusp_littlewood}. Hence we prove Lemma \ref{card_Lambda_n} below.
	
	\begin{proof}[Proof of Lemma \ref{card_Lambda_n}]
		We write $\Xi_l=\{n_1<n_2<\cdots<n_K\}$. We notice that, if $\Lambda(n)\cap\Lambda(n')\neq\emptyset$, then $2^pn=2^{p'}n'$ for some $p,p'\in\Z_{\geq0}$. Hence, we can write 
		\begin{equation}\label{differ_by_2}
			n_2=2^{q_2}n_1,\dots, n_K=2^{q_K}n_1
		\end{equation}
		for some $q_2<\cdots<q_K\in\N$. Since $\pi(d_{\varepsilon,n}\cap[0,T]^2)$ is maximal with respect to inclusion for each $n\in\Xi$, we have $\log(n_i/\varepsilon)<\log(n_{i+1}/\varepsilon)$ and $\log(n_i/\varepsilon)+\lambda_{n_i}<\log(n_{i+1}/\varepsilon)+\lambda_{n_{i+1}}$ for $1\leq i< K$. Moreover, if we write $2^{p_i}n_i$ for the maximal element of $\Lambda(n_i)$, then we can see that
		\begin{equation}\label{intersection_Lambda}
			\log\frac{n_{i+1}}{\varepsilon}\leq\log\frac{2^{p_i}n_i}{\varepsilon}\leq\log\frac{2^{p_{i+1}}n_{i+1}}{\varepsilon}
		\end{equation}
		(see Figure \ref{relposition}).
		\begin{figure}[b]
			\centering
			\includegraphics[width=14cm]{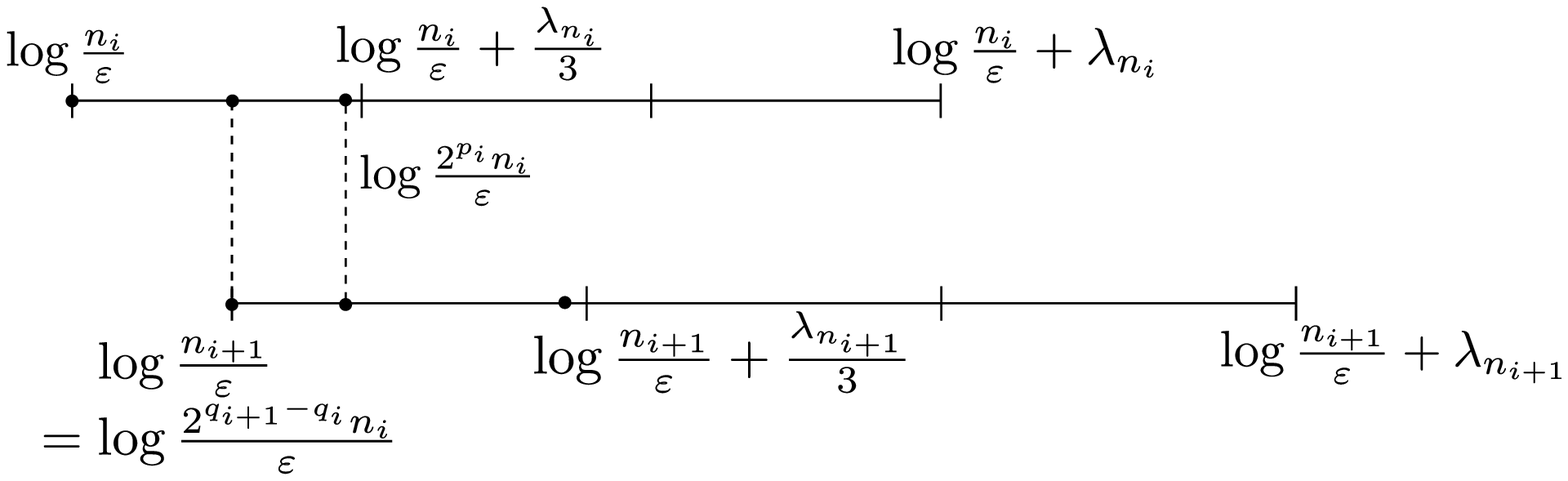}
			\caption{$\pi(d_{\varepsilon,n_i}\cap[0,T]^2)$ and $\pi(d_{\varepsilon,n_{i+1}}\cap[0,T]^2)$}\label{relposition}
		\end{figure}
		Indeed, if $\log(2^{p_i}n_i/\varepsilon)>\log(2^{p_{i+1}}n_{i+1}/\varepsilon)$, then $2^{p_{i+1}}n_{i+1}<2^{p_i}n_i$ and, since these two numbers differ only by the factor $2$ by (\ref{differ_by_2}), we have  $2^{p_{i+1}+1}n_{i+1}\leq2^{p_i}n_i$. This implies that $\log(n_{i+1}/\varepsilon)+\lambda_{n_{i+1}}/3<\log(n_i/\varepsilon)+\lambda_{n_i}/3$ and hence
		$\pi\left(d_{\varepsilon,n_{i+1}}\cap[0,T]^2\right)\subsetneq\pi(d_{\varepsilon,n_i}\cap[0,T]^2)$. This contradicts that $\pi\left(d_{\varepsilon,n_{i+1}}\cap[0,T]^2\right)$ is maximal. Hence, we obtain that
		$\log(2^{p_i}n_i/\varepsilon)\leq\log(2^{p_{i+1}}n_{i+1}/\varepsilon)$. If $\log(n_{i+1}/\varepsilon)>\log(2^{p_i}n_i/\varepsilon)$, then $2^{p_i}n_i<n_{i+1}\leq n_j$ for all $j>i$. On the other hand, as we have seen above, we have $2^{p_{j'}}n_{j'}\leq 2^{p_i}n_i$ for all $j'\leq i$. Hence, it follows that, for all $j'\leq i<j$, $\Lambda(n_{j'})\cap\Lambda(n_j)=\emptyset$ and this contradicts that $n_i\sim n_{i+1}$. Therefore, we obtain that $\log(n_{i+1}/\varepsilon)\leq\log(2^{p_i}n_i/\varepsilon)$. 
		
		For $n\in\Xi$, we write $I_n=[\log(n/\varepsilon),\log(n/\varepsilon)+\lambda_n/3]$ for the closed interval with the same left end point as $\pi(d_{\varepsilon,n}\cap[0,T]^2)$ and the length of one-third of $\pi(d_{\varepsilon,n}\cap[0,T]^2)$.
		Then, from the inequality (\ref{intersection_Lambda}), it follows that $\log(n_{i+1}/\varepsilon)\in I_{n_i}$ for $1\leq i<K$, and hence
		$I=\bigcup_{i=1}^KI_{n_i}$ is the close interval $[\log(n_1/\varepsilon),\log(n_K/\varepsilon)+\lambda_{n_K}/3]$.
		Furthermore, by the inequality (\ref{intersection_Lambda}) and (\ref{differ_by_2}), if we write $|I|$ for the length of the interval $I$, then we have
		\begin{equation*}
			\bigcup_{n\in\Xi_l}\Lambda(n)=\bigcup_{i=1}^K\Lambda(n_i)=\left\{2^pn_1\left|\ p\in\Z, 0\leq p\log2\leq|I|\right.\right\}.
		\end{equation*}
		Hence we have
		\begin{equation*}
			\left|\bigcup_{n\in\Xi_l}\Lambda(n)\right|\geq\frac{|I|}{\log2}.
		\end{equation*}
		Furthermore, for
		$$
		I=[\log(n_1/\varepsilon),\log(n_K/\varepsilon)+\lambda_{n_K}/3]\quad{\rm and}\quad \bigcup_{i=1}^K\pi(d_{\varepsilon,n_i}\cap[0,T]^2)=[\log(n_1/\varepsilon),\log(n_K/\varepsilon)+\lambda_{n_K}],
		$$
		we have
		\begin{equation*}
			|I|\geq \frac{1}{3}m_\R\left(\bigcup_{i=1}^K\pi(d_{\varepsilon,n_i}\cap[0,T]^2)\right)=
			\frac{1}{3}m_\R\left(\bigcup_{n\in\Xi_l}\pi(d_{\varepsilon,n}\cap[0,T]^2)\right).
		\end{equation*}
		These two inequalities imply that
		\begin{equation*}\
			\left|\bigcup_{n\in\Xi_l}\Lambda(n)\right|\geq\frac{1}{3\log2}m_\R\left(\bigcup_{n\in\Xi_l}\pi(d_{\varepsilon,n}\cap[0,T]^2)\right).
		\end{equation*}
	\end{proof}
	
	\section{Proof of Theorem \ref{diagonalempirical}}\label{section_proof_crutialthm}
	
	In this section, we prove Theorem \ref{diagonalempirical}.
	As we said in Subsection \ref{subsec_empirical_diagonal}, Theorem \ref{diagonalempirical} is the analogy of \cite[Theorem 1.5]{Usu22} for the $\times a,\times b$ action on $\R/\Z$
	to the diagonal action on $X=\SL(3,\R)/\SL(3,\Z)$. However, unlike $\R/\Z$, $X$ is noncompact. Hence, we need some works for the proof which are not in \cite{Usu22}.
	
	We fix $x_0\in X$ and $0<\gamma<1$ and prove Theorem \ref{diagonalempirical} by showing that $\dim_HZ_{x_0}(\gamma^2)\leq15\gamma$.
	In the following argument, we often extend the diagonal action on $X$ to the continuous action of $A$ on $\widetilde{X}=X\sqcup\{\infty\}$ with a fixed point $\infty$ and consider a finite Borel measure $\mu$ on $X$ with $\mu(X)\leq1$ as a Borel probability measure on $\widetilde{X}$ by letting $\mu(\{\infty\})=1-\mu(X)$. As we saw in Subsection \ref{subsec_high_entropy}, if $\mu$ is $a_1$-invariant, then we have $h_\mu(a_1)=\mu(X)h_{\widehat{\mu}}(a_1)$, where the left hand side is the measure-theoretic entropy of the action of $a_1$ on $\widetilde{X}$ with respect to $\mu$ and, as above, $\widehat{\mu}$ in the right hand side is the Borel probability measure on $X$ obtained by normalizing $\mu$.
	
	First, we reduce the problem of the $\R^2$-action to the $\Z^2$-action. For $x\in X$ and $N\in\N$, we define the Borel probability measure $\delta_{a_1,a_2,x}^N$ on $X$ by
	$$
	\delta_{a_1,a_2,x}^N=\frac{1}{N^2}\sum_{m,n=0}^{N-1}\delta_{a_1^ma_2^nx}
	=\frac{1}{N^2}\sum_{m,n=0}^{N-1}\delta_{a_{m,n}x}
	$$
	and call it the $N$-empirical measure of $x$ with respect to the action of $a_1$ and $a_2$.
	For a finite Borel measure $\mu$ on $X$, we say that $\delta_{a_1,a_2,x}^N$ accumulates to $\mu$ if there exists a divergent subsequence $\{N_k\}_{k=1}^\infty\subset\N$ such that $\{\delta_{a_1,a_2,x}^{N_k}\}_{k=1}^\infty$ converges to $\mu$. Then, it can be seen that $\mu$ is $a_1$ and $a_2$-invariant.
	
	\begin{lem}\label{R2toZ2}
		Let $Z'_{x_0}(\gamma^2)$ be the set of $u\in\overline{B^U_1}$ such that $\delta_{a_1,a_2,ux_0}^N$ accumulates to some $a_1$ and $a_2$-invariant finite Borel measure $\mu$ on $X$ such that
		$$
		1-\gamma^2<\mu(X)\leq1\quad{\it and}\quad h_{\widehat{\mu}}(a_1)\leq \gamma^2.
		$$
		Then we have $Z_{x_0}(\gamma^2)\subset Z'_{x_0}(\gamma^2)$.
	\end{lem}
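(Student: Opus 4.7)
The plan is to relate the continuous-time empirical measures $\delta^{T}_{A^+,x}$ to the discrete ones $\delta^{N}_{a_1,a_2,x}$ by the natural averaging operator over the unit square in the $\R^2$-parameter space, transfer accumulation via weak-$*$ compactness on $\widetilde X$, and then check that both the mass and entropy conditions survive the transfer.

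First, I would reduce to integer times. Let $u\in Z_{x_0}(\gamma^2)$, so that $\delta^{T_k}_{A^+,ux_0}\to\mu$ along some $T_k\to\infty$ in the weak-$*$ topology on Borel probability measures on $\widetilde X=X\sqcup\{\infty\}$. Replacing $T_k$ by $N_k=\lfloor T_k\rfloor$ changes the integral of any $f\in C(\widetilde X)$ by at most $2\|f\|_\infty(1-N_k^2/T_k^2)\to 0$, so $\delta^{N_k}_{A^+,ux_0}\to\mu$ as well. The key observation is the identity
\begin{equation*}
\delta^{N}_{A^+,x}=\int_{[0,1]^2}(a_{s,t})_*\delta^{N}_{a_1,a_2,x}\,ds\,dt=:\Phi\bigl(\delta^{N}_{a_1,a_2,x}\bigr),
\end{equation*}
obtained by partitioning $[0,N]^2$ into unit squares and using $a_{s+m,t+n}=a_{s,t}a_1^ma_2^n$. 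Extending the $A$-action to $\widetilde X$ by fixing $\infty$, the operator $\Phi$ extends to a continuous map on the space of Borel probability measures on $\widetilde X$.

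By compactness of that space, I may pass to a further subsequence and assume $\delta^{N_k}_{a_1,a_2,ux_0}\to\nu$ for some Borel probability measure on $\widetilde X$; regarded as a finite Borel measure on $X$, it is $a_1$- and $a_2$-invariant by the standard telescoping estimate $\bigl\|(a_i)_*\delta^{N}_{a_1,a_2,x}-\delta^{N}_{a_1,a_2,x}\bigr\|_{\mathrm{TV}}=O(1/N)$. Continuity of $\Phi$ gives $\Phi(\nu)=\mu$. Since $a_{s,t}$ fixes $\infty$, $\Phi$ preserves the mass at $\infty$, so $\nu(X)=\mu(X)\in(1-\gamma^2,1]$. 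For the entropy, I would apply Proposition \ref{entropyrep} to the generalized convex combination $\mu=\int_{[0,1]^2}(a_{s,t})_*\nu\,ds\,dt$ with respect to the $a_1$-action on $\widetilde X$, yielding
\begin{equation*}
h_\mu(a_1)=\int_{[0,1]^2}h_{(a_{s,t})_*\nu}(a_1)\,ds\,dt,
\end{equation*}
where each integrand equals $h_\nu(a_1)$ because the invertible $a_{s,t}$ commutes with $a_1$, making $a_{s,t}\colon(\widetilde X,\nu,a_1)\to(\widetilde X,(a_{s,t})_*\nu,a_1)$ a measure-theoretic isomorphism. Dividing by the common positive mass $\mu(X)=\nu(X)$ and using that $\delta_\infty$ contributes no entropy gives $h_{\widehat\nu}(a_1)=h_{\widehat\mu}(a_1)\leq\gamma^2$, so $u\in Z'_{x_0}(\gamma^2)$.

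The main technical obstacle is the entropy identification: one must verify that $(s,t)\mapsto(a_{s,t})_*\nu$ is Borel measurable into $M_{a_1}(\widetilde X)$ so that Proposition \ref{entropyrep} applies, and justify that push-forward under the invertible commuting transformation $a_{s,t}$ preserves $a_1$-entropy in this non-compact setting. Once the problem is lifted to the compact metrizable space $\widetilde X$ with $\infty$ as an $A$-fixed point, both are standard verifications, and the remaining mass/entropy bookkeeping reduces to an elementary normalization using $\mu(X)=\nu(X)>1-\gamma^2>0$.
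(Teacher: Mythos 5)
Your proposal is correct and follows essentially the same approach as the paper: reduce to integer times via the floor, pass to a convergent subsequence of the discrete empirical measures $\delta^{N_k}_{a_1,a_2,ux_0}\to\mu'$ by compactness of $\widetilde X$, identify $\mu=\int_{[0,1]^2}(a_{s,t})_*\mu'\,ds\,dt$, deduce $\mu(X)=\mu'(X)$, and apply Proposition~\ref{entropyrep} together with commutativity of $a_{s,t}$ and $a_1$ to get $h_{\widehat{\mu'}}(a_1)=h_{\widehat{\mu}}(a_1)$. Your packaging via the continuous operator $\Phi$ is a cosmetic reorganization of the paper's direct limit computation, and the technical points you flag (measurability of $(s,t)\mapsto(a_{s,t})_*\nu$, $a_i$-invariance of the limit) are also left to the reader in the paper.
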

	
	\begin{proof}
		Let $u\in Z_{x_0}(\gamma^2)$. Then there exist a sequence $\{T_k\}_{k=1}^\infty$ in $\R_{>0}$ and an $A$-invariant finite Borel measure $\mu$ on $X$ such that $T_k\to \infty$, $\delta_{A^+,ux_0}^{T_k}\to\mu$ as $k\to\infty$, $1-\gamma^2<\mu(X)\leq1$ and $h_{\widehat{\mu}}(a_1)\leq \gamma^2$. If we write $N_k=\lfloor T_k\rfloor\in\N$, then it can be seen that $\{N_k\}_{k=1}^\infty$ is a divergent subsequence of $\N$ and $\delta_{A^+,ux_0}^{N_k}\to\mu$ as $k\to\infty$. Since $\widetilde{X}$ is compact, 
		the space of Borel probability measures on $\widetilde{X}$ is compact with respect to the weak* topology and, by taking some subsequence, we can assume that $\{\delta_{a_1,a_2,ux_0}^{N_k}\}_{k=1}^\infty$ converges to some $a_1$ and $a_2$-invariant finite Borel measure $\mu'$ on $X$. For every $f\in C_0(X)$, $\int_{[0,1]^2}f(a_{s,t}x)\ dsdt\ (x\in X)$ is a continuous function on $X$ which is in $C_0(X)$. Hence, by $\delta_{a_1,a_2,ux_0}^{N_k}\to\mu'$ and $\delta_{A^+,ux_0}^{N_k}\to\mu$, we have
		\begin{align*}
			\int_{[0,1]^2}\int_Xf(a_{s,t}x)\ d\mu'(x)dsdt=&\int_X\int_{[0,1]^2}f(a_{s,t}x)\ dsdtd\mu'(x)\\
			=&\lim_{k\to\infty}\frac{1}{N_k^2}\sum_{m,n=0}^{N_k-1}\int_{[0,1]^2}f(a_{s,t}a_{m,n}ux_0)\ dsdt\\
			=&\lim_{k\to\infty}\frac{1}{N_k^2}\int_{[0,N_k]^2}f(a_{s,t}ux_0)\ dsdt\\
			=&\int_Xf\ d\mu
		\end{align*}
		for all $f\in C_0(X)$ and this implies that
		\begin{equation*}
			\mu=\int_{[0,1]^2}{a_{s,t}}_*\mu'\ dsdt.
		\end{equation*}
		From this equation, it follows that
		\begin{equation*}
			\mu(X)=\int_{[0,1]^2}\mu'(a_{s,t}^{-1}X)\ dsdt=\mu'(X).
		\end{equation*}
		Hence, we have $1-\gamma^2<\mu(X)=\mu'(X)\leq1$ and
		\begin{equation*}
			\widehat{\mu}=\int_{[0,1]^2}{a_{s,t}}_*\widehat{\mu'}\ dsdt.
		\end{equation*}
		We notice that the right hand side is the generalized convex combination of $a_1$ and $a_2$-invariant Borel probability measures on X. By considering these measures as Borel probability measures on $\widetilde{X}$, we can apply Proposition \ref{entropyrep} to this equation and, since $a_1$ and $a_{s,t}$ are commutative, we obtain that
		\begin{align*}
			h_{\widehat{\mu}}(a_1)=&\int_{[0,1]^2}h_{{a_{s,t}}_*\widehat{\mu'}}(a_1)\ dsdt\\
			=&\int_{[0,1]^2}h_{\widehat{\mu'}}(a_1)\ dsdt\\=&\ h_{\widehat{\mu'}}(a_1).
		\end{align*}
		Hence, we have $h_{\widehat{\mu'}}(a_1)=h_{\widehat{\mu}}(a_1)\leq \gamma^2$. Then we showed that $u\in Z'_{x_0}(\gamma^2)$.
	\end{proof}
	
	For a compact subset $K\subset X$, we write $Z'_{x_0}(\gamma^2,K)$ for the set of $u\in\overline{B^U_1}$ such that $\delta_{a_1,a_2,ux_0}^N$ accumulates to some $a_1$ and $a_2$-invariant finite Borel measure $\mu$ such that
	\begin{equation*}
		1-\gamma^2<\mu(\Int K)\leq\mu(X)\leq1\quad{\rm and}\quad h_{\widehat{\mu}}(a_1)\leq \gamma^2,
	\end{equation*}
	where $\Int K$ is the interior of $K$ in $X$. If we take an increasing sequence $K_1\subset K_2\subset\cdots K_n\subset\cdots$ of compact subsets of $X$ such that $X=\bigcup_{n=1}^\infty\Int K_n$, then it can be seen that
	\begin{equation*}
		Z'_{x_0}(\gamma^2)=\bigcup_{n=1}^\infty Z'_{x_0}(\gamma^2,K_n).
	\end{equation*}
	Hence, if we can show $\dim_H Z'_{x_0}(\gamma^2,K)\leq 15\gamma$ for any compact subset $K\subset X$, then we have $\dim_H Z_{x_0}(\gamma^2)\leq\dim_H Z'_{x_0}(\gamma^2)\leq 15\gamma$ and complete the proof of Theorem \ref{diagonalempirical}.
	
	We fix a compact subset $K\subset X$ and show $\dim_H Z'_{x_0}(\gamma^2,K)\leq 15\gamma$. 
	First, we define some open covers of $\widetilde{X}$ we need for the proof.
	We take a right invariant metric on $G$ and write $d_G$. We also write $B^G_\rho=\left\{g\in G\left|\ d_G(e,g)<\rho\right.\right\}$. Then, for the compact subset $K\subset X$, we can take a sufficiently small constant $0<\rho_K<1$ such that, for every $x\in K$, the map $B^G_{\rho_K}\ni g\mapsto gx\in X$ is injective (see \cite[Proposition 9.14]{EW11}). Since $U$ is a closed subgroup of $G$, we can take sufficiently small constants $0<\rho'<\rho<\rho_K$ such that $\overline{B^U_{e^2\rho}}\subset B^G_{\rho_K/2}$ and $B^G_{\rho'}\cap U\subset B^U_\rho$. We take a finite open cover $B=\{B_1,\dots,B_{k-1},B_k\}$ of $\widetilde{X}$ such that $\{B_1,\dots,B_{k-1}\}$ is a finite open cover of $K$ in $X$ such that $B_i=B^G_{\rho'/2}y_i,y_i\in K$ for $i=1,\dots, k-1$ and $B_k=\widetilde{X}\setminus K$. For this $B$, we take a Borel measurable and finite partition $\alpha=\{\alpha_1,\dots,\alpha_k\}$ of $\widetilde{X}$ such that $\alpha_i\subset B_i$ for $i=1,\dots,k$. For every $M\in\N$, we write $\alpha_M=\{\alpha_{M,1},\dots,\alpha_{M,k_M}\}=\bigvee_{m=0}^{M-1}a_1^{-m}\alpha$ and $\alpha_{M,i}=\bigcap_{m=1}^{M-1}a_1^{-m}\alpha_{j_{i,m}}\ (j_{i,m}\in\{1,\dots,k\})$ for $i=1,\dots,k_M$. Then we take a finite open cover $\beta_M=\{\beta_{M,1},\dots,\beta_{M,k_M}\}$ of $\widetilde{X}$ such that 
	\begin{equation}\label{patitionalphacoverbeta}
		\alpha_{k,i}=\bigcap_{m=1}^{M-1}a_1^{-m}\alpha_{j_{i,m}}\subset\beta_{k,i}\subset \bigcap_{m=0}^{M-1}a_1^{-m}B_{j_{m,i}}
	\end{equation}
	for $i=1,\dots,k_M$.
	
	We extract an important property of elements in $Z'_{x_0}(\gamma^2,K)$ to estimate Hausdorff dimension.
	We need some definitions.
	Let $k\in \N$. A {\it $k$-distribution} is an element $q=(q_1,\dots,q_k)$ of $\R^k$ such that $q_i\geq0\ (i=1,\dots,k)$ and $\sum_{i=1}^kq_i=1$. We write $D_k$ for the set of all $k$-distributions. For $q\in D_k$, we write $H(q)=-\sum_{i=1}^kq_i\log q_i\in\R_{\geq0}$ for {\it the entropy of $q$}. If $N\in\N$ and $c=(c_1,\dots,c_N)\in\{1,\dots,k\}^N$, we define $\dist(c)=(\dist(c)_1,\dots,\dist(c)_k)\in D_k$ by $\dist(c)_i=N^{-1}\left|\left\{n\in\{1,\dots,N\}\left|\ c_n=i\right.\right\}\right|\ (i=1,\dots,k)$. 
	Let $\beta=\{\beta_1,\dots,\beta_k\}$ be a finite cover of $\widetilde{X}$. For $x\in X$ and $N\in\N$, we say that $(\beta_{i_n})_{n=0,\dots,N-1}\in\beta\times\dots\times\beta$ is an {\it $N$-choice for $x$ with respect to $a_1$ and $\beta$} if we have $a_1^nx\in\beta_{i_n}$ for $n=0,\dots,N-1$. Then, for $(\beta_{i_n})_{n=0,\dots,N-1}$, we write $q\left((\beta_{i_n})_{n=0,\dots,N-1}\right)=\dist(i_0,\dots,i_{N-1})\in D_k$. Then we state Lemma \ref{keyinclusion} as follows.
	
	\begin{lem}\label{keyinclusion}
		Every $u\in Z'_{x_0}(\gamma^2,K)$ satisfies the following condition. For any $0<\varepsilon<1$ and $M_0\in\N$, there exists $M\in\N$ such that $M\geq M_0$ and, for infinitely many $N\in\N$, the following holds: there exists $n\in\Z$ such that $0\leq n<\gamma N$,
		$$
		\frac{1}{M}H(q((\beta_{M,i_m})_{m=0,\dots,N-1}))<2\gamma+\varepsilon
		$$
		for some $N$-choice $(\beta_{M,i_m})_{m=0,\dots,N-1}$ for $a_2^nux_0$ with respect to $a_1$ and $\beta_M$, and
		$$
		\frac{1}{N}\left|\left\{m\in\{0,1\dots,N-1\}\left|\ a_1^ma_2^nux_0\in X\setminus\Int K\right.\right\}\right|<2\gamma+\varepsilon.
		$$
	\end{lem}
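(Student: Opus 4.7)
The plan is to exploit the weak-$\ast$ accumulation $\delta^{N_k}_{a_1,a_2,ux_0}\to\mu$ (interpreted on $\widetilde X=X\sqcup\{\infty\}$ by giving $\infty$ mass $1-\mu(X)$) together with Jensen's and Markov's inequalities. The two hypotheses $h_{\widehat\mu}(a_1)\le\gamma^2$ and $\mu(\Int K)>1-\gamma^2$ will force the average over $n$ of both the symbolic entropy along the $a_1$-orbit of $a_2^nux_0$ and the excursion time outside $\Int K$ to be at most $\gamma^2+o(1)$. Choosing the Markov threshold $2\gamma+\varepsilon$ with a bit of slack then forces each bad $n$-set to be strictly smaller than $\tfrac{\gamma}{2}N_k$, so the combined bad set has fewer than $\gamma N_k$ elements and some good $n$ must lie in $[0,\gamma N_k)$.

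More concretely, fix a convergent subsequence $\{N_k\}$ and note $h_\mu(a_1)=\mu(X)h_{\widehat\mu}(a_1)\le\gamma^2$. Given $\varepsilon,M_0$, set $\delta=\varepsilon\gamma/8$ and pick $M\ge M_0$ with $\tfrac1MH_\mu(\alpha_M)<\gamma^2+\delta$, which is possible since $\tfrac1MH_\mu(\alpha_M)\downarrow h_\mu(a_1,\alpha)\le h_\mu(a_1)$. For each $n$ construct an $N_k$-choice by letting $i_m(n)$ be the unique index with $a_1^ma_2^nux_0\in\alpha_{M,i_m(n)}$; validity as a $\beta_M$-choice follows from the inclusion $\alpha_{M,i}\subset\beta_{M,i}$ in (\ref{patitionalphacoverbeta}), and the resulting $q^{(n,N_k)}$ is the $\alpha_M$-empirical distribution of the $a_1$-orbit of $a_2^nux_0$. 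Since
\[
\frac{1}{N_k}\sum_{n=0}^{N_k-1}q^{(n,N_k)}_i=\delta^{N_k}_{a_1,a_2,ux_0}(\alpha_{M,i}),
\]
concavity of $H$ gives $\tfrac{1}{N_k}\sum_n H(q^{(n,N_k)})\le H_{\delta^{N_k}}(\alpha_M)$.

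I would then combine this with weak-$\ast$ convergence to obtain $\tfrac1MH_{\delta^{N_k}}(\alpha_M)<\gamma^2+2\delta$ for large $k$, so Markov at threshold $2\gamma+\varepsilon$ bounds the entropy-bad $n$-set by $\tfrac{\gamma^2+2\delta}{2\gamma+\varepsilon}N_k$, which with $\delta=\varepsilon\gamma/8$ equals $\tfrac{\gamma}{2}N_k-\eta_1N_k$ for some $\eta_1=\eta_1(\gamma,\varepsilon)>0$. For the cusp condition: $\widetilde X\setminus\Int K$ is closed with $\mu_{\widetilde X}$-mass $1-\mu(\Int K)<\gamma^2$, so the Portmanteau inequality and the inclusion $X\setminus\Int K\subset\widetilde X\setminus\Int K$ give $\delta^{N_k}_{a_1,a_2,ux_0}(X\setminus\Int K)<\gamma^2+\delta$ for large $k$, and the analogous Markov step bounds the cusp-bad $n$-set by $\tfrac{\gamma}{2}N_k-\eta_2N_k$. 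Summing, the total bad set has size $<\gamma N_k-(\eta_1+\eta_2)N_k$, while $[0,\gamma N_k)\cap\Z$ contains at least $\gamma N_k-1$ integers; hence for every sufficiently large $k$ a good $n$ exists in $[0,\gamma N_k)$, yielding infinitely many admissible $N$.

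The main technical obstacle is the step $\tfrac1MH_{\delta^{N_k}}(\alpha_M)\le\tfrac1MH_\mu(\alpha_M)+\delta$, since the cells $\alpha_{M,i}$ are generally not continuity sets of $\mu_{\widetilde X}$ and $\delta^{N_k}(\alpha_{M,i})$ need not converge to $\mu(\alpha_{M,i})$ a priori. I would resolve this using Radon regularity of $\mu_{\widetilde X}$ on the compact metric space $\widetilde X$: approximate each of the finitely many $\alpha_{M,i}$ from inside and outside by closed and open subsets with $\mu$-error controlled in terms of $M$, $k_M$ and $\delta$; together with the normalisation $\sum_i\delta^{N_k}(\alpha_{M,i})=1=\sum_i\mu(\alpha_{M,i})$ this pins the full vector within small total variation, and uniform continuity of the entropy function on the $k_M$-simplex delivers the required bound for all sufficiently large $k$.
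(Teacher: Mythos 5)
Your overall strategy — pass to a weak-$\ast$ limit on $\widetilde{X}$, bound $h_\mu(a_1)=\mu(X)h_{\widehat\mu}(a_1)\le\gamma^2$, use concavity of $H$ to control an average over $n$, then isolate a good $n\in[0,\gamma N_k)$ — matches the paper's proof. Your use of Markov's inequality (two separate ``bad'' sets each of size $<\tfrac{\gamma}{2}N_k-\eta N_k$) is a mild repackaging of the paper's trick of \emph{adding} the two averaged quantities and producing a single $n$ where the sum is $<2\gamma+\varepsilon$; both are fine.

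There is, however, a genuine gap in the step you flag as the ``main technical obstacle,'' and your proposed resolution does not close it. You build the $N_k$-choice by recording for each $m,n$ the unique index $i$ with $a_1^ma_2^nux_0\in\alpha_{M,i}$, so the joint empirical distribution you feed into $H$ is precisely $(\delta^{N_k}(\alpha_{M,i}))_i$. To compare $H_{\delta^{N_k}}(\alpha_M)$ with $H_\mu(\alpha_M)$ you need $\delta^{N_k}(\alpha_{M,i})$ to be close to $\mu(\alpha_{M,i})$, but these are arbitrary Borel cells and need not be $\mu$-continuity sets. Your fix by inner/outer regular approximation $C_i\subset\alpha_{M,i}\subset O_i$ plus normalisation does not work: Portmanteau gives $\limsup_k\delta^{N_k}(C_i)\le\mu(C_i)$ and $\liminf_k\delta^{N_k}(O_i)\ge\mu(O_i)$, and both inequalities, together with $\delta^{N_k}(C_i)\le\delta^{N_k}(\alpha_{M,i})\le\delta^{N_k}(O_i)$, leave $\delta^{N_k}(\alpha_{M,i})$ completely unconstrained from above and below. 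What you actually need is a \emph{lower} bound $\liminf_k\delta^{N_k}(\alpha_{M,i})\gtrsim\mu(\alpha_{M,i})$, which would require an open $V_i\subset\alpha_{M,i}$ with $\mu(V_i)$ close to $\mu(\alpha_{M,i})$; such a $V_i$ need not exist since $\alpha_{M,i}$ may have small or empty interior. Once one coordinate can drift, the normalisation alone cannot pin the whole vector.

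The paper circumvents this precisely by exploiting the slack between the partition $\alpha_M$ and the cover $\beta_M$. It takes compact $C_i\subset\alpha_{M,i}$ with $\mu(\alpha_{M,i}\setminus C_i)$ small, then disjoint \emph{open} $V_i$ with $C_i\subset V_i\subset\beta_{M,i}$ — crucially, $V_i$ lies inside $\beta_{M,i}$, not inside $\alpha_{M,i}$ — and builds the $\beta_M$-choice so that $i_{m,n}=i$ whenever the orbit point lies in $V_i$. The resulting frequency of the symbol $i$ is then $\ge\delta^{N_k}(V_i)$, and since $V_i$ is open, Portmanteau gives $\liminf_k\delta^{N_k}(V_i)\ge\mu(V_i)\ge\mu(\alpha_{M,i})-\eta'$. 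These one-sided bounds from below, combined with the normalisation of both distribution vectors, force $\ell^2$-closeness to $(\mu(\alpha_{M,i}))_i$ and hence closeness of entropies. This is exactly why the lemma is formulated in terms of $\beta_M$-choices rather than $\alpha_M$-codings, and your argument as written does not have access to this degree of freedom. With that modification your outline goes through.
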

	
	We notice that, in Lemma \ref{keyinclusion}, we consider each $\beta_M$ as an open cover of $X$ by restriction. We write $Q_{x_0}(\gamma^2,K,\{\beta_M\}_{M=1}^\infty)$ for the set of $u\in\overline{B^U_1}$ which satisfies the condition stated in Lemma \ref{keyinclusion}. Then we have $\dim_H Z'_{x_0}(\gamma^2,K)\leq\dim_H Q_{x_0}(\gamma^2,K,\{\beta_M\}_{M=1}^\infty)$ and we will  estimate the right hand side later.
	
	For the proof of Lemma \ref{keyinclusion}, we need the one more notion.
	For $x\in X$ and $N\in\N$, we say that $(\beta_{i_{m,n}})_{m,n=0,\dots,N-1}\in\beta^{\{(m,n)\in \Z^2\left|\ 0\leq m,n<N\right.\}}$ is an {\it $N$-choice for $x$ with respect to $a_1,a_2$ and $\beta$} if we have $a_1^ma_2^nx=a_{m,n}x\in \beta_{i_{m,n}}$ for $0\leq m,n<N$. Then, for $(\beta_{i_{m,n}})_{m,n=0,\dots,N-1}$, we write $q((\beta_{i_{m,n}})_{m,n=0,\dots,N-1})=\dist((i_{m,n})_{m,n=0,\dots,N-1})\in D_k$. We notice that, if we fix $n=0,\dots,N-1$, then $(\beta_{i_{m,n}})_{m=0,\dots,N-1}\in\beta\times\cdots\times\beta$ is an $N$-choice for $a_2^nx$ with respect to $a_1$ and $\beta$ and we have $q((\beta_{i_{m,n}})_{m,n=0,\dots,N-1})=N^{-1}\sum_{n=0}^{N-1}q((\beta_{i_{m,n}})_{m=0,\dots,N-1})\in D_k$.
	
	\begin{proof}[Proof of Lemma \ref{keyinclusion}]
		Let $u\in Z'_{x_0}(\gamma^2,K)$. Then there exist a divergent subsequence $\{N_j\}_{j=1}^\infty$ of $\N$ and an $a_1$ and $a_2$-invariant finite Borel measure $\mu$ on $X$ such that
		\begin{enumerate}
			\renewcommand{\labelenumi}{\rm{(\roman{enumi})}}
			\item $\delta^{N_j}_{a_1,a_2,ux_0}\to\mu$ as $j\to\infty$,
			\item $1-\gamma^2<\mu(\Int K)\leq\mu(X)\leq1$, and
			\item $h_{\widehat{\mu}}(a_1)\leq \gamma^2$.
		\end{enumerate}
		We consider $\mu$ as the $a_1$ and $a_2$-invariant Borel probability measure on $\widetilde{X}$ such that $\mu(\{\infty\})=1-\mu(X)$. Then, as we said above, we have $h_\mu(a_1)=\mu(X)h_{\widehat{\mu}}(a_1)\leq \gamma^2$ by (iii). 
		
		We take arbitrary $0<\varepsilon<1$.
		Since $h_\mu(a_1,\alpha)=\lim_{M\to\infty}M^{-1}H_\mu(\alpha_M)\leq h_\mu(a_1)<\gamma^2+\gamma\varepsilon$, we have
		\begin{equation*}
			\frac{1}{M}H_\mu(\alpha_M)<\gamma^2+\gamma\varepsilon
		\end{equation*}
		for sufficiently large $M\in\N$, where $H_\mu(\alpha_M)=H(\mu(\alpha_{M,1}),\dots,\mu(\alpha_{M,k_M}))$ for the $k_M$-distribution $(\mu(\alpha_{M,1}),\dots,\mu(\alpha_{M,k_M}))$. We fix such $M\in\N$. Since $H(q)\ (q\in D_{k_M})$ is continuous, there exists $0<\eta<1$ such that, for $q\in D_{k_M}$,
		\begin{equation}\label{approxentropy}
			\left|q-(\mu(\alpha_{M,1}),\dots,\mu(\alpha_{M,k_M}))\right|<\eta\quad{\rm implies}\quad\frac{1}{M}H(q)<\gamma^2+\gamma\varepsilon,
		\end{equation}
		where $|\cdot|$ denotes the Euclidean norm on $\R^{k_M}$. By the regularity of the Borel probability measure $\mu$ on $\widetilde{X}$, for each $i=1,\dots,k_M$, there exists a compact subset $C_i\subset \widetilde{X}$ such that $C_i\subset\alpha_{M,i}$ and $\mu(\alpha_{M,i}\setminus C_i)<\eta/2\sqrt{k_M}k_M$. Since $C_1,\dots,C_{k_M}$ are compact, pairwise disjoint and $C_i\subset\alpha_{M,i}\subset\beta_{M,i}$, we can take an open subset $V_i\subset\widetilde{X}$ for each $i=1,\dots,k_M$ such that $C_i\subset V_i\subset \beta_{M,i}$ and $V_1,\dots, V_{k_M}$ are pairwise disjoint. By (i), if $j$ is sufficiently large, we have
		\begin{equation*}
			\delta^{N_j}_{a_1,a_2,ux_0}(V_i)>\mu(V_i)-\frac{\eta}{2\sqrt{k_M}k_M}\geq \mu(C_i)-\frac{\eta}{2\sqrt{k_M}k_M}
		\end{equation*}
		and hence
		\begin{equation}\label{accumulatemu}
			\delta^{N_j}_{a_1,a_2,ux_0}(V_i)>\mu(\alpha_{M,i})-\frac{\eta}{\sqrt{k_M}k_M}
		\end{equation}
		for $i=1,\dots, k_M$. Since $V_1,\dots,V_{k_M}$ are pairwise disjoint and $V_i\subset\beta_{M,i}$, for sufficiently large $j$, we can take an $N_j$-choice $(\beta_{M,i_{m,n}})_{m,n=0,\dots,N_j-1}$ for $ux_0$ with respect to $a_1,a_2$ and $\beta_M$ such that $i_{m,n}=i$ whenever $a_1^ma_2^nux_0\in V_i$. We write $q((\beta_{M,i_{m,n}})_{m,n=0,\dots,N_j-1})=(q_1,\dots,q_{k_M})$. Then, by the inequality (\ref{accumulatemu}), we have
		\begin{equation*}
			q_i\geq\delta^{N_j}_{a_1,a_2,ux_0}(V_i)>\mu(\alpha_{M,i})-\frac{\eta}{\sqrt{k_M}k_M}
		\end{equation*}
		for $i=1,\dots,k_M$. Since $(\mu(\alpha_{M,1}),\dots,\mu(\alpha_{M,k_M}))$ and $q((\beta_{M,i_{m,n}})_{m,n=0,\dots,N_j-1})$ are both $k_M$-distribution, from the above inequality, we have
		\begin{equation*}
			\left|q((\beta_{M,i_{m,n}})_{m,n=0,\dots,N_j-1})-(\mu(\alpha_{M,1}),\dots,\mu(\alpha_{M,k_M}))\right|<\eta
		\end{equation*}
		and hence, by (\ref{approxentropy}),
		\begin{equation}\label{entropyofchoice}
			\frac{1}{M}H\left(q((\beta_{M,i_{m,n}})_{m,n=0,\dots,N_j-1})\right)<\gamma^2+\gamma\varepsilon.
		\end{equation}
		Now, as we said above, we have the convex combination
		\begin{align*}
			&q((\beta_{M,i_{m,n}})_{m,n=0\dots,N_j-1})\\=&\ \frac{1}{N_j}\sum_{n=0}^{N_j-1}q((\beta_{M,i_{m,n}})_{m=0,\dots,N_j-1})\\
			=&\ \frac{\lfloor \gamma N_j\rfloor+1}{N_j}\frac{1}{\lfloor \gamma N_j\rfloor+1}\sum_{0\leq n<\gamma N_j}q((\beta_{M,i_{m,n}})_{m=0,\dots,N_j-1})\\&\qquad\qquad+\frac{N_j-\lfloor \gamma N_j\rfloor-1}{N_j}\frac{1}{N_j-\lfloor \gamma N_j\rfloor-1}\sum_{\gamma N_j\leq n<N_j}q((\beta_{M,i_{m,n}})_{m=0,\dots,N_j-1})
		\end{align*}
		of elements in $D_{k_M}$
		\footnote{Here we assume that $tN_j$ is not an integer. Even if $tN_j$ is an integer, we can do the same argument by replacing $\lfloor tN_j\rfloor+1$ to $tN_j$.}
		. Since $H(q)\ (q\in D_{k_M})$ is concave, we have
		\begin{align*}
			&H\left(q((\beta_{M,i_{m,n}})_{m,n=0,\dots,N_j-1})\right)\\
			\geq&\ \frac{\lfloor \gamma N_j\rfloor+1}{N_j}H\left(\frac{1}{\lfloor \gamma N_j\rfloor+1}\sum_{0\leq n<\gamma N_j}q((\beta_{M,i_{m,n}})_{m=0,\dots,N_j-1})\right)\\
			&\qquad\qquad+\frac{N_j-\lfloor \gamma N_j\rfloor-1}{N_j}H\left(\frac{1}{N_j-\lfloor \gamma N_j\rfloor-1}\sum_{\gamma N_j\leq n<N_j}q((\beta_{M,i_{m,n}})_{m=0,\dots,N_j-1})\right)\\
			\geq&\ \gamma H\left(\frac{1}{\lfloor \gamma N_j\rfloor+1}\sum_{0\leq n<\gamma N_j}q((\beta_{M,i_{m,n}})_{m=0,\dots,N_j-1})\right)\\
			\geq&\ \frac{\gamma}{\lfloor \gamma N_j\rfloor+1}\sum_{0\leq n<\gamma N_j}H\left(q((\beta_{M,i_{m,n}})_{m=0,\dots,N_j-1})\right)
		\end{align*}
		and hence, by the inequality (\ref{entropyofchoice}),
		\begin{equation}\label{t_entropyofchoice}
			\frac{1}{\lfloor \gamma N_j\rfloor+1}\sum_{0\leq n<\gamma N_j}\frac{1}{M}H\left(q((\beta_{M,i_{m,n}})_{m=0,\dots,N_j-1})\right)<\gamma+\varepsilon.
		\end{equation}
		
		Here, by (ii), we have $\mu(\widetilde{X}\setminus\Int K)<\gamma^2$. Since $\widetilde{X}\setminus \Int K\subset \widetilde{X}$ is compact, from (i), it follows that
		\begin{equation}\label{Z2escapeofmass}
			\delta^{N_j}_{a_1,a_2,ux_0}(\widetilde{X}\setminus\Int K)=\frac{1}{N_j^2}\left|\left\{(m,n)\in\{0,\dots,N_j-1\}^2\left|\ a_1^ma_2^nux_0\in X\setminus\Int K\right.\right\}\right|<\gamma^2.
		\end{equation}
		for sufficiently large $j$.
		Furthermore, we have
		\begin{align*}
			&\frac{1}{N_j^2}\left|\left\{(m,n)\in\{0,\dots,N_j-1\}^2\left|\ a_1^ma_2^nux_0\in X\setminus\Int K\right.\right\}\right|\\
			=&\ \frac{1}{N_j}\sum_{n=0}^{N_j-1}\frac{1}{N_j}\left|\left\{m\in\{0,\dots,N_j-1\}\left|\ a_1^ma_2^nux_0\in X\setminus\Int K\right.\right\}\right|\\
			\geq&\ \frac{\lfloor \gamma N_j\rfloor+1}{N_j}\frac{1}{\lfloor \gamma N_j\rfloor+1}\sum_{0\leq n<\gamma N_j}\frac{1}{N_j}\left|\left\{m\in\{0,\dots,N_j-1\}\left|\ a_1^ma_2^nux_0\in X\setminus\Int K\right.\right\}\right|\\
			\geq&\ \frac{\gamma}{\lfloor \gamma N_j\rfloor+1}\sum_{0\leq n<\gamma N_j}\frac{1}{N_j}\left|\left\{m\in\{0,\dots,N_j-1\}\left|\ a_1^ma_2^nux_0\in X\setminus\Int K\right.\right\}\right|
		\end{align*}
		and hence, by (\ref{Z2escapeofmass}),
		\begin{equation}\label{t_escapeofmass}
			\frac{1}{\lfloor \gamma N_j\rfloor+1}\sum_{0\leq n<\gamma N_j}\frac{1}{N_j}\left|\left\{m\in\{0,\dots,N_j-1\}\left|\ a_1^ma_2^nux_0\in X\setminus\Int K\right.\right\}\right|<\gamma.
		\end{equation}
		By adding (\ref{t_entropyofchoice}) and (\ref{t_escapeofmass}), we can see that, for sufficiently large $j$, there exists $0\leq n<\gamma N_j$ such that
		\begin{equation*}
			\frac{1}{M}H\left(q((\beta_{M,i_{m,n}})_{m=0,\dots,N_j-1})\right)+\frac{1}{N_j}\left|\left\{m\in\{0,\dots,N_j-1\}\left|\ a_1^ma_2^nux_0\in X\setminus\Int K\right.\right\}\right|<2\gamma+\varepsilon.
		\end{equation*}
		As we saw above, $(\beta_{M,i_{m,n}})_{m=0,\dots,N_j-1}$ is an $N_j$-choice for $a_2^nux_0$ with respect to $a_1$ and $\beta_M$. We remember that the above $N_j$ for sufficiently large $j$ for $M$ and $\varepsilon$ is arbitrary, and sufficiently large $M$ for $\varepsilon$ is arbitrary. Since we took $0<\varepsilon<1$ arbitrarily, we showed that $u\in Z'_{x_0}(\gamma^2, K)$ satisfies the condition in the lemma.
	\end{proof}
	
	As we said, we write $Q_{x_0}(\gamma^2,K,\{\beta_M\}_{M=1}^\infty)$ for the set of $u\in \overline{B^U_1}$ which satisfies the condition in Lemma \ref{keyinclusion}, that is,
	for any $0<\varepsilon<1$ and $M_0\in\N$, there exists $M\in\N$ such that $M\geq M_0$ and, for infinitely many $N\in\N$, the following holds: there exists $n\in\Z$ such that $0\leq n<\gamma N$,
	$M^{-1}H(q((\beta_{M,i_m})_{m=0,\dots,N-1}))<2\gamma+\varepsilon$
	for some $N$-choice $(\beta_{M,i_m})_{m=0,\dots,N-1}$ for $a_2^nux_0$ with respect to $a_1$ and $\beta_M$, and $N^{-1}\left|\left\{m\in\{0,1\dots,N-1\}\left|\ a_1^ma_2^nux_0\in X\setminus\Int K\right.\right\}\right|<2\gamma+\varepsilon$.
	The only thing which we have to do is to prove the following proposition.
	
	\begin{prop}\label{keystatement}
		$$
		\dim_H Q_{x_0}(\gamma^2,K,\{\beta_M\}_{M=1}^\infty)\leq 15\gamma.
		$$
	\end{prop}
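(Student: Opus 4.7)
The plan is a covering argument on $U\cong\R^2$. For each admissible tuple $(M,N,n,(i_m))$ with $M\geq M_0$, $0\leq n<\gamma N$, and $(i_m)_{m=0}^{N-1}\in\{1,\dots,k_M\}^N$ satisfying $\tfrac{1}{M}H(q((i_m)))<2\gamma+\varepsilon$, I define the cylinder
$$
E(M,N,n,(i_m))=\left\{u\in\overline{B^U_1}:a_1^ma_2^nux_0\in\beta_{M,i_m}\ \forall m,\ \bigl|\{m:a_1^ma_2^nux_0\notin\Int K\}\bigr|<(2\gamma+\varepsilon)N\right\}.
$$
By the definition of $Q_{x_0}(\gamma^2,K,\{\beta_M\}_{M=1}^\infty)$, these cylinders cover the whole set. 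The goal is to show $\sum_E(\diam E)^s\to0$ as $N\to\infty$ for fixed $M$ whenever $s>2\gamma/(1-2\gamma)$. This suffices because $2\gamma/(1-2\gamma)\leq15\gamma$ for $\gamma\leq13/30$, while for larger $\gamma$ the bound $15\gamma>2\geq\dim U$ is automatic.

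For the diameter of a nonempty $E$, I use a Bowen-ball argument. Set $\ell^*=\max\{m\in\{0,\dots,N-1\}:j_{i_m,0}\neq k\}$. Since $E$ is nonempty and any $u\in E$ satisfies the escape bound, and since $j_{i_m,0}=k$ would force $a_1^ma_2^nux_0\in B_k=\widetilde{X}\setminus K$, we obtain $\ell^*\geq(1-2\gamma-\varepsilon)N-1$. At this $\ell^*$, every $u\in E$ satisfies $a_1^{\ell^*}a_2^nux_0\in B^G_{\rho'/2}y_{j_{i_{\ell^*},0}}$; injectivity of $B^G_{\rho_K}$ around this point, combined with the explicit conjugation formula $a_1^{\ell^*}a_2^n\tau_{\alpha,\beta}a_2^{-n}a_1^{-\ell^*}=\tau_{e^{2\ell^*+n}\alpha,\,e^{\ell^*+2n}\beta}$, gives for any $u,u'\in E$ the bounds $|\alpha-\alpha'|\leq\rho e^{-(2\ell^*+n)}$ and $|\beta-\beta'|\leq\rho e^{-(\ell^*+2n)}$. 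Hence
$$
\diam_{d_U}E\leq\rho e^{-(\ell^*+2n)}\leq \rho e\cdot e^{-(1-2\gamma-\varepsilon)N-2n}.
$$

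For the count, the naive type-class estimate on $(i_m)\in\{1,\dots,k_M\}^N$ with $H(q)\leq M(2\gamma+\varepsilon)$ gives $(N+1)^{k_M}e^{NM(2\gamma+\varepsilon)}$, whose exponent blows up with $M$. Instead, I would work with the associated sequence $(j_\ell)_{\ell=0}^{N+M-2}\in\{1,\dots,k\}^{N+M-1}$ of $\alpha$-labels, defined via $j_\ell=j_{i_m,m'}$ for $\ell=m+m'$ and subject to the overlap consistency $j_{i_m,m'}=j_{i_{m+1},m'-1}$. The empirical $M$-block distribution of $(j_\ell)$ is exactly $q$, and by subadditivity of block entropy ($h_{\text{rate}}\leq H(q)/M$) together with a Shannon--McMillan-type counting on Markov chains over $(M-1)$-blocks, the number of such $(j_\ell)$-sequences is at most $e^{N(2\gamma+\varepsilon)+o(N)}$ as $N\to\infty$ with $M$ fixed, \emph{independently of} $M$.

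Combining the bounds, and using $\sum_n e^{-2sn}=O(1)$ for $s>0$,
$$
\sum_E(\diam E)^s\leq C\,\gamma N\cdot e^{N(2\gamma+\varepsilon)+o(N)}\cdot e^{-s(1-2\gamma-\varepsilon)N},
$$
which tends to $0$ whenever $s>(2\gamma+\varepsilon)/(1-2\gamma-\varepsilon)$; letting $\varepsilon\to0$ yields the claim. The main obstacle is the counting step: establishing the $M$-independent bound $e^{N(2\gamma+\varepsilon)}$ requires exploiting the overlap consistency between consecutive $\alpha_M$-cylinders, and some bookkeeping is needed to ensure that the covering $(i_m)$'s from $\beta_M$ (rather than the partition $\alpha_M$) do not inflate the count.
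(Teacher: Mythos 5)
The central step of your proposal, the diameter bound for the cylinders $E$, has a genuine gap. You fix a last return time $\ell^*$ with $a_1^{\ell^*}a_2^n u x_0\in B^G_{\rho'/2}y_j$ ($j\neq k$) and argue that the injectivity of $B^G_{\rho_K}\ni g\mapsto gy_j$ forces $|u_1-u_1'|\leq\rho e^{-(2\ell^*+n)}$ and $|u_2-u_2'|\leq\rho e^{-(\ell^*+2n)}$ for all $u,u'\in E$. That conclusion requires the displacement $w'=a_1^{\ell^*}a_2^n u'u^{-1}a_2^{-n}a_1^{-\ell^*}\in U$ to already lie inside $B^G_{\rho_K}$. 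But $u,u'\in\overline{B^U_1}$ only gives $|w'_1|\leq 2e^{2\ell^*+n}$, $|w'_2|\leq 2e^{\ell^*+2n}$, which is enormous, so the two lifts of $a_1^{\ell^*}a_2^nux_0$ and $a_1^{\ell^*}a_2^nu'x_0$ that land in $B^G_{\rho'/2}\tilde y_j$ may differ by a nontrivial element of $\Gamma$. Cusp excursions before $\ell^*$ are exactly where this $\Gamma$-wrapping is possible, and your argument gives no control over it. The paper handles this with an \emph{inductive} scheme (Lemma~\ref{partition_B}): one tracks $\diam_1,\diam_2$ step by step, uses the injectivity radius at each non-cusp time to propagate the contraction (which is legitimate because the diameter at that step is already $\leq\rho$), and at each cusp time one must explicitly \emph{subdivide} $E$ to restore the smallness hypothesis; this is where the factor $(4e^3)^{N'}$ enters. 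This subdivision cost, $\approx(4e^3)^{(2\gamma+\varepsilon)MN}$, is the dominant term that pushes the final exponent up to roughly $(11+2\log4)\gamma$, hence $15\gamma$. Your claimed target $2\gamma/(1-2\gamma)$ silently omits this cost, which is why the numbers don't match.

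The counting concern you raise is a red herring. You worry that $|R(k_M,N,M(2\gamma+\varepsilon))|\lesssim e^{NM(2\gamma+\varepsilon)}$ ``blows up with $M$,'' but the paper works on the natural time scale $MN$: the cylinders $E(u)$ are defined by $MN$-choices, the diameter after the inductive contraction is $\lesssim e^{-MN}$, and the Bowen count $e^{NM(2\gamma+2\varepsilon)}$ is small compared to $e^{+sMN}$ for $s$ just above $2\gamma+2\varepsilon$; the threshold $N\geq N_{\varepsilon,M}$ depending on $M$ is perfectly fine since we sum over $M\geq M_0$ with $M_0\to\infty$ at the end. The Markov/SMB re-encoding via overlap-consistent $\alpha$-labels is neither proved in your sketch nor needed. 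A further omission: $E\subset a_2^n\overline{B^U_1}a_2^{-n}$ has $d_U$-diameter of order $e^{2n}$, so one must first chop it into $O(\rho^{-2}e^{3n})$ pieces of diameter $\leq\rho$ before the inductive lemma applies; together with the range $0\leq n<\gamma MN$ this yields the $3\gamma$ contribution to the exponent, which your estimate again does not account for.
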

	
	For the proof, we need the following two lemmas. These lemmas are essential for our estimate of Hausdorff dimension. The first one is due to R. Bowen. See \cite{Bow73} for the proof.
	
	\begin{lem}[{\cite[Lemma 4]{Bow73}}]\label{countchoice}
		For $k,N\in\N$ and $t\geq 0$, we write
		$$
		R(k,N,t)=\left\{c\in\{1,\dots,k\}^N\left|H(\dist(c))\leq t\right.\right\}.
		$$
		Then, fixing $k$ and $t$, we have
		$$
		\limsup_{N\to\infty}\frac{1}{N}\log\left|R(k,N,t)\right|\leq t.
		$$
	\end{lem}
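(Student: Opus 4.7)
The plan is to bound $|R(k,N,t)|$ by grouping sequences according to their empirical distribution (``type'') and using a standard Stirling-type estimate on multinomial coefficients. Concretely, I would first partition $\{1,\dots,k\}^N$ according to the value of $\dist(c)\in D_k$. The set of types realizable by length-$N$ sequences is
$$
D_k^{(N)}=\left\{q\in D_k\left|\ N q_i\in\Z_{\geq0}\ \text{for all}\ i\right.\right\},
$$
whose cardinality is at most $(N+1)^k$, a polynomial in $N$.

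Next, for each $q\in D_k^{(N)}$, the number of sequences $c\in\{1,\dots,k\}^N$ with $\dist(c)=q$ is the multinomial coefficient $\binom{N}{Nq_1,\dots,Nq_k}$. The key combinatorial estimate is
$$
\binom{N}{Nq_1,\dots,Nq_k}\leq e^{NH(q)},
$$
which follows from Stirling's formula (or, more cleanly, from the fact that $\sum_{q\in D_k^{(N)}}\binom{N}{Nq_1,\dots,Nq_k}q_1^{Nq_1}\cdots q_k^{Nq_k}\leq 1$ together with a single-term lower bound, giving $q_1^{Nq_1}\cdots q_k^{Nq_k}\binom{N}{Nq_1,\dots,Nq_k}\leq 1$ and hence the claimed bound). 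Restricting to those $q$ with $H(q)\leq t$ yields the uniform upper bound $e^{Nt}$ on each term.

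Combining the two ingredients,
$$
|R(k,N,t)|=\sum_{\substack{q\in D_k^{(N)}\\ H(q)\leq t}}\binom{N}{Nq_1,\dots,Nq_k}\leq (N+1)^k\cdot e^{Nt}.
$$
Taking logarithms, dividing by $N$ and passing to the limit superior, the $(N+1)^k$ factor contributes $k\log(N+1)/N\to0$, leaving $\limsup_{N\to\infty}N^{-1}\log|R(k,N,t)|\leq t$, as desired.

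The only step that requires any care is the multinomial bound $\binom{N}{Nq_1,\dots,Nq_k}\leq e^{NH(q)}$; everything else is routine counting. There is no serious obstacle here since both $k$ and $t$ are fixed and the dependence on $N$ is only through a polynomial prefactor that is swallowed by the $1/N$ normalization.
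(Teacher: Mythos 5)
Your proof is correct and takes essentially the same route as the source the paper cites for this lemma (Bowen's Lemma 4, which the paper does not reprove): group sequences by type, bound the number of types by $(N+1)^k$, bound each type class by $e^{NH(q)}$, and absorb the polynomial factor after taking $\frac{1}{N}\log$. One cosmetic correction: in your parenthetical justification the base measure must be held fixed — as written, $\sum_{q\in D_k^{(N)}}\binom{N}{Nq_1,\dots,Nq_k}q_1^{Nq_1}\cdots q_k^{Nq_k}$ can exceed $1$ (e.g.\ $k=2$, $N=1$ gives $2$) because $q$ varies in both factors — but the inequality you actually use, $\binom{N}{Nq_1,\dots,Nq_k}\prod_i q_i^{Nq_i}\leq 1$, is correct and follows at once from the multinomial theorem applied to $1=(q_1+\cdots+q_k)^N$.
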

	
	Before we state the second lemma, we remember the definition of the finite open cover $B=\{B_1,\dots,B_{k-1},B_k\}$ of $\widetilde{X}$ which we took when we defined $\alpha$ and $\beta_M$. We regard $B$ as the finite open cover of $X$ by restriction. Then $\{B_1,\dots,B_{k-1}\}$ is a finite open cover of $K$ in $X$ such that $B_i=B^G_{\rho'/2}y_i,y_i\in K$ for $i=1,\dots, k-1$, where $0<\rho'<\rho<\rho_K$ are the small constants we took for $K$, and $B_k=X\setminus K$. For a subset $E$ of $U$, we write $\diam_1 E$ and $\diam_2 E$ for
	\begin{align*}
		\diam_1E=&\sup\left\{|v_1-v'_1|\left|
		v=
		\begin{pmatrix}
			1&0&0\\v_1&1&0\\v_2&0&1
		\end{pmatrix},
		v'=
		\begin{pmatrix}
			1&0&0\\v'_1&1&0\\v'_2&0&1
		\end{pmatrix}\in E
		\right.\right\},\\
		\diam_2E=&\sup\left\{|v_2-v'_2|\left|
		v=
		\begin{pmatrix}
			1&0&0\\v_1&1&0\\v_2&0&1
		\end{pmatrix},
		v'=
		\begin{pmatrix}
			1&0&0\\v'_1&1&0\\v'_2&0&1
		\end{pmatrix}\in E
		\right.\right\}.
	\end{align*}
	
	\begin{lem}\label{partition_B}
		Let $N\in\N$ and $E$ be a subset of $U$ with $\diam E\leq\rho$ satisfying the following condition.
		There exist $x\in X$ and $j_0,\dots,j_{N-1}\in\{1,\dots,k-1,k\}$ such that, for any $v\in E$,
		\begin{equation}\label{condition_B}
			vx\in\bigcap_{m=0}^{N-1}a_1^{-m}B_{j_m}.
		\end{equation}
		If we write $N'=\left|\left\{m\in\{0,\dots,N-1\}\left|\ j_m=k\right.\right\}\right|$,
		then $E$ is partitioned into at most $(4e^3)^{N'}$ subsets so that each $E'$ of the subsets satisfies $\diam_1 E'\leq \rho e^{-2(N-1)}$ and $\diam_2 E'\leq \rho e^{-(N-1)}$.
	\end{lem}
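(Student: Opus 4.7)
The plan is to build the partition by forward induction on $m = 0, 1, \dots, N-1$, maintaining the invariant that each piece $E'$ of the current partition satisfies $\diam_i(a_1^m E' a_1^{-m}) \leq \rho$ for $i = 1, 2$. A direct computation gives $a_1 \tau_{v_1,v_2} a_1^{-1} = \tau_{e^2 v_1,\, e v_2}$, so conjugation by $a_1^m$ acts on $U \cong \R^2$ as the diagonal scaling $(v_1, v_2) \mapsto (e^{2m} v_1,\, e^m v_2)$. Hence the invariant at the final step $m = N-1$ is exactly equivalent to the desired bounds $\diam_1 E' \leq \rho e^{-2(N-1)}$ and $\diam_2 E' \leq \rho e^{-(N-1)}$, and the initial partition $\{E\}$ already satisfies the invariant at $m=0$ because $\diam E \leq \rho$ by hypothesis.

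The crucial observation at indices $m$ with $j_m \neq k$ is uniform contraction of the whole current piece. For any $v, v' \in E'$ one has $w := v' v^{-1} \in U$, and the assumption places $vx, v'x$ in $a_1^{-m} B_{j_m} = a_1^{-m} B^G_{\rho'/2} y_{j_m}$ with $y_{j_m} \in K$. Using the injectivity of $g \mapsto g y_{j_m}$ on $B^G_{\rho_K}$ together with the right-invariance of $d_G$, one deduces $a_1^m w a_1^{-m} \in B^G_{\rho'} \cap U \subset B^U_\rho$, i.e. $|e^{2m}(v'_1 - v_1)| < \rho$ and $|e^m(v'_2 - v_2)| < \rho$. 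Thus the invariant at step $m$ holds automatically on all of $E'$ and no subdivision is required.

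At indices $m$ with $j_m = k$ no such contraction is available. Assuming the invariant held at step $m-1$, the scaling alone only gives $\diam_1(a_1^m E' a_1^{-m}) \leq e^2 \rho$ and $\diam_2(a_1^m E' a_1^{-m}) \leq e \rho$. We subdivide $E'$ by intersecting $a_1^m E' a_1^{-m}$ with a grid of axis-aligned rectangles in $U$ of side $\rho \times \rho$ and conjugating back; at most $\lceil e^2 \rceil \cdot \lceil e \rceil = 24 \leq 4 e^3$ cells are nonempty, and each restores the invariant at step $m$. Iterating over $m = 1, \dots, N-1$, the partition is multiplied by a factor of at most $4 e^3$ only at those $m$ with $j_m = k$, yielding at most $(4 e^3)^{N'}$ pieces at the end, as claimed.

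The main obstacle is a clean formulation of the contraction at $j_m \neq k$ steps, which relies on both the injectivity of the local chart $B^G_{\rho_K} \to X$ for base points in $K$ (ensured by the choice of $\rho_K$) and the inclusion $B^G_{\rho'} \cap U \subset B^U_\rho$ built into the choice of $\rho, \rho'$; once these are in place, the combinatorial counting and the explicit diagonal scaling on $U$ are entirely routine.
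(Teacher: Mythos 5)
Your argument is correct and matches the paper's proof in its essentials: the same injectivity-radius contraction at indices where $j_m\neq k$, and the same grid subdivision at indices where $j_m=k$, yielding the $(4e^3)^{N'}$ bound. The only differences are cosmetic bookkeeping --- you iterate one index at a time, whereas the paper groups consecutive runs of like indices into blocks $I_p$, $J_q$ and inducts over block boundaries --- and one detail you leave implicit in the contraction step is that the induction hypothesis at $m-1$ is precisely what guarantees $a_1^m w a_1^{-m}\in\overline{B^U_{e^2\rho}}\subset B^G_{\rho_K/2}$, so that $w'g$ lies within the injectivity radius $\rho_K$ before the chart argument is applied.
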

	
	\begin{proof}
		We divide the interval $[0,N-1]$ of integers into two family of subintervals $\{I_p\}_{p=1}^P$ and $\{J_q\}_{q=1}^Q$:
		$$
		[0,N-1]=\bigsqcup_{p=1}^P I_p\sqcup\bigsqcup_{q=1}^Q J_q
		$$
		so that $\max I_p<\min I_{p+1}$, $\max J_q<\min J_{q+1}$,
		\begin{equation*}
			\bigsqcup_{p=1}^PI_p=\left\{m\in[0,N-1]\left|\ j_m\neq k\right.\right\}\quad{\rm and}\quad
			\bigsqcup_{q=1}^QJ_q=\left\{m\in[0,N-1]\left|\ j_m=k\right.\right\}.
		\end{equation*}
		For each $L\in[0,N-1]$ such that
		$$
		[0,L]=\bigsqcup_{p=1}^RI_p\sqcup\bigsqcup_{q=1}^SJ_q
		$$
		for some $1\leq R\leq P$ and $1\leq S\leq Q$, we inductively partition $E$ into at most $4^Se^{3\sum_{q=1}^S|J_q|}$ subsets so that each $E'$ of the subsets satisfies $\diam_1 E'\leq\rho e^{-2L}$ and $\diam_2 E'\leq\rho e^{-L}$.
		Here, for an interval $J$ of integers, $|J|$ denotes the number of elements in $J$.
		The base case of our induction is $[0,L]=I_1$ or $[0,L]=J_1$
		
		Suppose $[0,L]=I_1$. If $L=0$, then we have nothing to prove. We assume $L>0$ and show that
		$\diam_1 E\leq \rho e^{-2L}$ and $\diam_2 E\leq \rho e^{-L}$. We take arbitrary two elements $v,v'$ in $E$. Then we have
		$$
		v'=wv,\quad
		w=\begin{pmatrix}
			1&0&0\\w_1&1&0\\w_2&0&1
		\end{pmatrix}\in U\quad{\rm and}\quad
		|w_1|,|w_2|\leq \rho.
		$$
		Since $v,v'\in E$, by (\ref{condition_B}), we have
		$a_1vx\in B_{j_1}$ and $ a_1v'x=a_1wa_1^{-1}a_1vx\in B_{j_1}$. We write $x'=a_1vx\in B_{j_1}$ and
		$$
		w'=a_1wa_1^{-1}=
		\begin{pmatrix}
			1&0&0\\e^2w_1&1&0\\ew_2&0&1
		\end{pmatrix}\in\overline{B^U_{e^2\rho}}.
		$$
		From $1\in[0,L]=I_1$ and the definition of $\{I_p\}_{p=1}^P$, we see that $j_1\neq k$ and $B_{j_1}$ has the form
		$B_{j_1}=B^G_{\rho'/2}y_{j_1}$ for some $y_{j_1}\in K$. Since $x', w'x'\in B_{j_1}$, we have
		$x'=gy_{j_1}, w'x'=g'y_{j_1}$ for some $g,g'\in B^G_{\rho'/2}$ and hence $w'x'=g'y_{j_1}=w'gy_{j_1}$. Furthermore, by the definition of $0<\rho'<\rho<\rho_K$, we have $d_G(e,g')<\rho'/2<\rho_K$ and
		\begin{equation*}
			d_G(e,w'g)=d_G(g^{-1},w')\leq d_G(g^{-1},e)+d_G(e,w')<\rho'/2+\rho_K/2<\rho_K.
		\end{equation*}
		Since $\rho_K$ is an injectivity radius on $K$, we have $g'=w'g$ and hence
		$$
		w'=g'g^{-1}\in B^G_{\rho'}\cap U\subset B^U_\rho.
		$$
		Then we have $|e^2w_1|,|ew_2|\leq\rho$ and hence $|v_1-v'_1|=|w_1|\leq\rho e^{-2}$, $|v_2-v'_2|=|w_2|\leq\rho e^{-1}$. We show that $\diam_1 E\leq \rho e^{-2}$ and $\diam_2 E\leq \rho e^{-1}$. Since $[0,L]=I_1$, we can repeat this argument until $L$ and we obtain that $\diam_1 E\leq \rho e^{-2L}$ and $\diam_2 E \leq \rho e^{-L}$.
		
		If $[0,L]=J_1$, then we partition $E$ into subsets so that each $E'$ of the subsets satisfies
		$\diam_1 E'\leq \rho e^{-2L}$ and $\diam_2 E'\leq \rho e^{-L}$. Since $\diam_1 E,\diam_2 E\leq\rho$, we can partition $E$ so that the number of subsets which appear is bounded by $\lceil \rho (\rho e^{-2L})^{-1}\rceil\lceil \rho(\rho e^{-L})^{-1}\rceil\leq 2e^{2L}\cdot 2e^L<4e^{3|J_1|}$. Then we prove the statement at the base case.
		
		Assume that, for $0\leq L<N-1$ such that $[0,L]=\bigsqcup_{p=1}^RI_p\sqcup\bigsqcup_{q=1}^SJ_q$ for some $1\leq R\leq P$ and $1\leq S\leq Q$, $E$ is partitioned into at most $4^S e^{3\sum_{q=1}^S|J_q|}$ subsets so that each $E'$ of the subsets satisfies $\diam_1 E'\leq \rho e^{-2L}$ and $\diam_2 E'\leq \rho e^{-L}$. Then the next step of our induction is $0<L'\leq N-1$ such that
		$[0,L']=\bigsqcup_{p=1}^{R+1}I_p\sqcup\bigsqcup_{q=1}^SJ_q$ or $[0,L']=\bigsqcup_{p=1}^RI_p\sqcup\bigsqcup_{q=1}^{S+1}J_q$. Suppose $[0,L']=\bigsqcup_{p=1}^{R+1}I_p\sqcup\bigsqcup_{q=1}^SJ_q$. Then, for each $E'$ of the subsets of $E$ which appear in the partition of $E$ in the assumption of our induction, we can apply the same argument as above to $E'$ and show that $\diam_1 E'\leq \rho e^{-2(L+|I_{R+1}|)}=\rho e^{-2L'}$ and $\diam_2 E'\leq \rho e^{-L'}$. Hence, the statement also holds at this step. If $[0,L']=\bigsqcup_{p=1}^RI_p\sqcup\bigsqcup_{q=1}^{S+1}J_q$, then we partition each $E'$ into subsets so that each $E''$ of the subsets satisfies $\diam_1 E''\leq \rho e^{-2L'}$ and $\diam_2 E''\leq \rho e^{-L'}$. Since $\diam_1 E'\leq \rho e^{-2L}$ and $\diam_2 E'\leq \rho e^{-L}$, we can partition $E'$ so that the number of subsets which appear is bounded by $\lceil\rho e^{-2L}(\rho e^{-2L'})^{-1}\rceil\lceil\rho e^{-L}(\rho e^{-L'})^{-1}\rceil=\lceil e^{2|J_{S+1}|}\rceil\lceil e^{|J_{S+1}|}\rceil\leq 4e^{3|J_{S+1}|}$. Hence, the number of subsets of $E$ which appear in the partition of $E$ at this step is bounded by $4^S e^{3\sum_{q=1}^S|J_q|}\cdot 4e^{3|J_{S+1}|}=4^{S+1} e^{3\sum_{q=1}^{S+1}|J_q|}$ and we see that the statement holds at this step. We complete our induction.
		
		We let $L=N-1$ in the statement. We have
		$\sum_{q=1}^Q|J_q|=\left|\left\{m\in[0,N-1]\left|\ j_m=k\right.\right\}\right|=N'$ and $Q\leq\sum_{q=1}^Q|J_q|=N'$. Hence, we see that $E$ can be partitioned into at most $(4e^3)^{N'}$ subsets so that each $E'$ of the subsets satisfies
		$\diam_1 E'\leq\rho e^{-2(N-1)}$ and $\diam_2 E'\leq\rho e^{-(N-1)}$.
	\end{proof}
	
	We begin the proof of Proposition \ref{keystatement}.
	
	\begin{proof}[Proof of Proposition \ref{keystatement}]
		We take arbitrary $0<\varepsilon<1$ and $M_0\in\N$. By Lemma \ref{countchoice}, for every $M\in\N$ such that $M\geq M_0$, there exists $N_{\varepsilon, M}\in\N$ such that
		\begin{equation}\label{Mt_countchoice}
			\left|R(k_M,N,M(2\gamma+\varepsilon))\right|\leq e^{NM(2\gamma+2\varepsilon)}
		\end{equation}
		for all $N\geq N_{\varepsilon,M}$. Here, we can take $N_{\varepsilon,M}$ so large that it satisfies
		\begin{equation}\label{solargeN}
			M^2(k_M)^M\sum_{N\geq N_{\varepsilon,M}}Ne^{-MN\varepsilon}\leq 2^{-M}.
		\end{equation}
		
		The entropy $H(q)$ is uniformly continuous in $q\in D_{k_M}$ for all $M$. Hence, we can see that, if $u\in Q_{x_0}(\gamma^2,K,\{\beta_M\}_{M=1}^\infty)$, then there exists $M\in\N$ such that $M\geq M_0$ and, for infinitely many $N\in\N$, the following holds: there exists $n\in\Z$ such that $0\leq n<\gamma MN$,
		\begin{equation}\label{condition_of_entropy}
			\frac{1}{M}H\left(q((\beta_{M,i_m})_{m=0,\dots,MN-1})\right)<2\gamma+\varepsilon
		\end{equation}
		for some $MN$-choice $(\beta_{M,i_m})_{m=0,\dots,MN-1}$ for $a_2^nux_0$ with respect to $a_1$ and $\beta_M$, and
		\begin{equation}\label{condition_of_escape_of_mass}
			\frac{1}{MN}\left|\left\{m\in\{0,\dots,MN-1\}\left|\ a_1^ma_2^nux_0\in X\setminus\Int K\right.\right\}\right|<2\gamma+\varepsilon.
		\end{equation}
		To see this, it is sufficient to add some $0\leq l<M$ to $N$ in the definition of $Q_{x_0}(\gamma^2,K,\{\beta_M\}_{M=1}^\infty)$ for $\varepsilon/2$.
		
		Let $u\in Q_{x_0}(\gamma^2,K,\{\beta_M\}_{M=1}^\infty)$. Then there exists $M\geq M_0$, $N\geq N_{\varepsilon,M}$ and $0\leq n<\gamma MN$ such that the inequality (\ref{condition_of_escape_of_mass}) holds and the inequality (\ref{condition_of_entropy}) holds for some $MN$-choice $(\beta_{M,i_m})_{m=0,\dots,MN-1}$ for $a_2^nux_0$ with respect to $a_1$ and $\beta_M$.
		For this $(\beta_{M,i_m})_{m=0,\dots,MN-1}$ and each $l=0,1,\dots, M-1$, we write
		$$
		q\left((\beta_{M,i_m})_{m=0,\dots,MN-1},l\right)=\dist(i_l,i_{M+l},\dots,i_{(N-1)M+l})\in D_{k_M}.
		$$
		Then we have the convex combination
		$$
		q\left((\beta_{M,i_m})_{m=0,\dots,MN-1}\right)=\frac{1}{M}\sum_{l=0}^{M-1}q\left((\beta_{M,i_m})_{m=0,\dots,MN-1},l\right)
		$$
		of elements in $D_{k_M}$. Since $(\beta_{M,i_m})_{m=0,\dots,MN-1}$ satisfies the inequality (\ref{condition_of_entropy}) and $H(q)\ (q\in D_{k_M})$ is concave, it follows that there exists $l\in\{0,\dots,M-1\}$ such that
		\begin{equation}\label{condition_of_l_entropy}
			\frac{1}{M}H\left(q\left((\beta_{M,i_m})_{m=0,\dots,MN-1},l\right)\right)<2\gamma+\varepsilon.
		\end{equation}
		For $M\geq M_0$, $N\geq N_{\varepsilon,M}$, $0\leq n<\gamma MN$ and $0\leq l<M$, we write $S(M,N,n,l)$ for the set of $u\in\overline{B^U_1}$ such that $u$ satisfies the inequality (\ref{condition_of_escape_of_mass}) for the above $M$, $N$ and $n$, and there exists an $MN$-choice $(\beta_{M,i_m})_{m=0,\dots,MN-1}$
		for $a_2^nux_0$ with respect to $a_1$ and $\beta_M$ such that $q\left((\beta_{M,i_m})_{m=0,\dots,MN-1},l\right)$
		satisfies the inequality (\ref{condition_of_l_entropy}) for the above $l$. Then, by the above argument, we have
		\begin{equation}\label{coverofQ}
			Q_{x_0}(\gamma^2,K,\{\beta_M\}_{M=1}^\infty)\subset\bigcup_{M\geq M_0}\bigcup_{N\geq N_{\varepsilon,M}}\bigcup_{0\leq n<\gamma MN}\bigcup_{0\leq l<M}S(M,N,n,l).
		\end{equation}
		
		We fix $M\geq M_0$, $N\geq N_{\varepsilon,M}$, $0\leq n<\gamma MN$ and $0\leq l<M$ and construct a good finite cover of $S(M,N,n,l)$. For each $u\in S(M,N,m,l)$, there exists an $MN$-choice for $a_2^nux_0$ with respect to $a_1$ and $\beta_M$ satisfying the above condition. We take and fix one of such $MN$-choices and write $\left(\beta_{M,i_m(u)}\right)_{m=0,\dots,MN-1}$. From the inequality (\ref{condition_of_l_entropy}), we have
		$$
		H\left(q((\beta_{M,i_m(u)})_{m=0,\dots,MN-1},l)\right)=H\left(\dist(i_l(u),i_{M+l}(u),\dots,i_{(N-1)M+l}(u))\right)<M(2\gamma+\varepsilon)
		$$
		and hence
		$
		(i_l(u),i_{M+l}(u),\dots,i_{(N-1)M+l}(u))\in R(k_M,N,M(2\gamma+\varepsilon)).
		$
		We define the subset $E(u)$ of $U$ by
		\begin{align*}
			E(u)=&\left\{v\in a_2^n\overline{B^U_1}a_2^{-n}\left|\ a_1^jva_2^nx_0\in\beta_{M,i_j(u)},\right.\right.\\
			&\qquad\qquad\qquad\qquad0\leq j<l\ {\rm and}\ a_1^{rM+l}va_2^nx_0\in\beta_{M,i_{rM+l}(u)},\ 0\leq r<N\Bigl\}.
		\end{align*}
		It follows that $a_2^nua_2^{-n}\in E(u)$. We write $\mathscr{E}(M,N,n,l)=\left\{E(u)\left|u\in S(M,N,n,l)\right.\right\}$. Then we have
		\begin{equation}\label{coverS}
			S(M,N,n,l)\subset\bigcup_{E\in\mathscr{E}(M,N,n,l)}a_2^{-n}Ea_2^n.
		\end{equation}
		Since, for each $u\in S(M,N,n,l)$, $E(u)$ is determined by $i_0(u),\dots,i_{l-1}(u)\in\{1,\dots,k_M\}$ and $(i_l(u),i_{M+l}(u),\dots,i_{(N-1)M+l}(u))\in R(k_M,N,M(2\gamma+\varepsilon))$ and $N\geq N_{\varepsilon,M}$, we have from the inequality (\ref{Mt_countchoice}) that
		\begin{equation}\label{estimatenumber}
			\left|\mathscr{E}(M,N,n,l)\right|\leq (k_M)^l\left|R(k_M,N,M(2\gamma+\varepsilon))\right|\leq (k_M)^le^{MN(2\gamma+2\varepsilon)}.
		\end{equation}
		
		We take each $E\in\mathscr{E}(M,N,n,l)$. Using Lemma \ref{partition_B}, we partition $E$ into small subsets such that their diameters are about $e^{-MN}$ and find the number of subsets which appear. We take $u\in S(M,N,n,l)$ such that $E=E(u)$. By the definition of $E=E(u)$, it follows that
		$$
		va_2^nx_0\in\bigcap_{j=0}^{l-1}a_1^{-j}\beta_{M,i_j(u)}\cap\bigcap_{r=0}^{N-1}a_1^{-rM-l}\beta_{M,i_{rM+l}(u)}
		$$
		for all $v\in E$. From this and (\ref{patitionalphacoverbeta}), it can be seen that there exists $j_m(u)\in\{1,\dots,k\}$ for $m=0,\dots,MN-1$ such that
		\begin{equation}\label{coverbyB}
			va_2^nx_0\in\bigcap_{m=0}^{MN-1}a_1^{-m}B_{j_m(u)}
		\end{equation}
		for all $v\in E$.
		Here, $a_2^nua_2^{-n}\in E=E(u)$. In addition,
		since $u\in S(M,N,n,l)$, $u$ satisfies the inequality (\ref{condition_of_escape_of_mass}) and then
		$
		(MN)^{-1}\left|\left\{m\in\{0,\dots,MN-1\}\left|\ a_1^ma_2^nux_0\in X\setminus K\right.\right\}\right|<2\gamma+\varepsilon.
		$
		Hence, it follows that
		\begin{equation}\label{small_escape_of_mass}
			\frac{1}{MN}\left|\left\{m\in\{0,\dots,MN-1\}\left|\ j_m(u)=k\right.\right\}\right|<2\gamma+\varepsilon.
		\end{equation}
		Since
		$E=E(u)\subset a_2^n\overline{B^U_1}a_2^{-n}=\left\{v\in U\left||v_1|\leq e^n,|v_2|\leq e^{2n}\right.\right\}$, we can partition $E$ into at most $\lceil\rho^{-1}2e^n\rceil\lceil\rho^{-1}2e^{2n}\rceil\leq 9\rho^{-2}e^{3n}$ subsets so that each $E'$ of the subsets satisfies $\diam E'\leq\rho$.
		From (\ref{coverbyB}), we can apply Lemma \ref{partition_B} to each $E'$ and use the inequality (\ref{small_escape_of_mass}). Then we see that $E$ is partitioned into at most
		$9\rho^{-2}e^{3n}\cdot(4e^3)^{MN(2\gamma+\varepsilon)}$ subsets so that each $E'$ of the subsets satisfies
		$\diam_1 E'\leq\rho e^{-2(MN-1)}$ and $\diam_2 E'\leq\rho e^{-(MN-1)}$.
		
		Let $M\geq M_0$, $N\geq N_{\varepsilon,M}$, $0\leq n<\gamma MN$ and $0\leq l<M$.
		For each $E\in\mathscr{E}(M,N,n,l)$, we take a decomposition of $E$: 
		\begin{equation*}
			E=\bigsqcup_{1\leq b\leq 9\rho^{-2}e^{3n}\cdot(4e^3)^{MN(2\gamma+\varepsilon)}}E_b
		\end{equation*}
		such that $\diam E_b\leq \rho e^{-(MN-1)}$. Then, by (\ref{coverS}) we have
		\begin{equation}\label{smallcover_ofS}
			S(M,N,n,l)\subset\bigcup_{E\in \mathscr{E}(M,N,n,l)}\bigcup_{1\leq b\leq 9\rho^{-2}e^{3n}\cdot(4e^3)^{MN(2\gamma+\varepsilon)}}a_2^{-n}E_ba_2^n.
		\end{equation}
		We write $\widetilde{E_b}=a_2^{-n}E_ba_2^n$. Then, since $U$ is stable for the conjugation with $a_2^{-n}$, it follows that
		\begin{equation}\label{diamEtilde}
			\diam\widetilde{E_b}\leq\diam E_b\leq \rho e^{-(MN-1)}.
		\end{equation}
		By (\ref{coverofQ}) and (\ref{smallcover_ofS}), if we define
		\begin{align*}
			\widetilde{\mathscr{E}}_{\varepsilon,M_0}=&\left\{\left.\widetilde{E_b}\ \right|M\geq M_0, N\geq N_{\varepsilon,M},0\leq n< \gamma MN,0\leq l<M,\right.\\
			&\quad\quad\quad E\in\mathscr{E}(M,N,n,l),1\leq b\leq 9\rho^{-2}e^{3n}\cdot(4e^3)^{MN(2\gamma+\varepsilon)}\Bigr\},
		\end{align*}
		then $\widetilde{\mathscr{E}}_{\varepsilon,M_0}$ is a countable cover of $Q_{x_0}(\gamma^2,K,\{\beta_M\}_{M=1}^\infty)$. If we fix
		$M\geq M_0$, $N\geq N_{\varepsilon,M}$, $0\leq n<\gamma MN$ and $0\leq l<M$, then, by the inequality (\ref{estimatenumber}), the number of elements of $\widetilde{\mathscr{E}}_{\varepsilon,M_0}$ corresponding to $M$, $N$, $n$ and $l$ is bounded by
		\begin{equation*}
			\left|\mathscr{E}(M,N,n,l)\right|\cdot 9\rho^{-2}e^{3n}\cdot (4e^3)^{MN(2\gamma+\varepsilon)}<
			9\rho^{-2}(k_M)^le^{3n}(4e^4)^{MN(2\gamma+2\varepsilon)}.
		\end{equation*}
		We put $\lambda_\varepsilon=3\gamma+(4+\log4)(2\gamma+2\varepsilon)+\varepsilon$. Then, by the inequality (\ref{diamEtilde}) and (\ref{solargeN}), we have
		\begin{align*}
			&\sum_{\widetilde{E}\in\widetilde{\mathscr{E}}_{\varepsilon,M_0}}\left(\diam\widetilde{E}\right)^{\lambda_\varepsilon}\\
			\leq&\sum_{M\geq M_0}\sum_{N\geq N_{\varepsilon,M}}\sum_{0\leq n<\gamma MN}\sum_{0\leq l<M}
			9\rho^{-2}(k_M)^le^{3n}(4e^4)^{MN(2\gamma+2\varepsilon)}(\rho e^{-(MN-1)})^{\lambda_\varepsilon}\\
			\leq&\ 9e^{\lambda_\varepsilon}\rho^{\lambda_\varepsilon-2}\sum_{M\geq M_0}M(k_M)^M\sum_{N\geq N_{\varepsilon,M}}\sum_{0\leq n<\gamma MN}e^{3n}(4e^4)^{MN(2\gamma+2\varepsilon)}e^{-MN\lambda_\varepsilon}\\
			\leq&\ 9e^{\lambda_\varepsilon}\rho^{\lambda_\varepsilon-2}\sum_{M\geq M_0}M(k_M)^M\sum_{N\geq N_{\varepsilon,M}}2\gamma MNe^{3\gamma MN}(4e^4)^{MN(2\gamma+2\varepsilon)}e^{-MN(3\gamma+(4+\log 4)(2\gamma+2\varepsilon))}e^{-MN\varepsilon}\\
			\leq&\ 18\gamma e^{\lambda_\varepsilon}\rho^{\lambda_\varepsilon-2}\sum_{M\geq M_0}M^2(k_M)^M\sum_{N\geq N_{\varepsilon, M}}Ne^{-MN\varepsilon}\\
			\leq&\ 18\gamma e^{\lambda_\varepsilon}\rho^{\lambda_\varepsilon-2}\sum_{M\geq M_0}2^{-M}.
		\end{align*}
		From the inequality (\ref{diamEtilde}), we have $\diam \widetilde{\mathscr{E}}_{\varepsilon,M_0}\leq \rho e^{-M_0}\to 0$ as $M_0\to\infty$. Furthermore, the above right hand side converges to $0$ as $M_0\to\infty$. These imply that $\dim_H Q_{x_0}(\gamma^2,K,\{\beta_M\}_{M=1}^\infty)$ $\leq \lambda_\varepsilon$. Since $0<\varepsilon<1$ is arbitrary and $\lambda_\varepsilon\to 3\gamma+2(4+\log 4)\gamma<15\gamma$ as $\varepsilon\to 0$, we obtain $\dim_H Q_{x_0}(\gamma^2,K,\{\beta_M\}_{M=1}^\infty)\leq 15\gamma$ and complete the proof.
	\end{proof}

\end{document}